\font\tencmmib=cmmib10 \skewchar\tencmmib '60
\def\lessim{\ \lower4pt\hbox{$
		\buildrel{\displaystyle <}\over\sim$}\ }
\def\gessim{\ \lower4pt\hbox{$\buildrel{\displaystyle >}
		\over\sim$}\ }
\def\hx{\hat{X}}
\def\ha{\hat{A}}
\def\hg{\hat{G}}
\newcommand{\la}{\langle}
\newcommand{\ra}{\rangle}
\newcommand{\e}{\mathbb{E}}
\newcommand{\p}{\mathbb{P}}
\newtheorem{lemma}{\bf Lemma}[section]
\newtheorem{definition}{\bf Definition}[section]
\newtheorem{theorem}{\bf Theorem}[section]
\newtheorem{remark}{\bf Remark}
\newtheorem{example}{\bf Example}[section]
\newtheorem{proposition}{\bf Proposition}[section]
\newmdtheoremenv{theo}{Theorem}
\numberwithin{equation}{section}
\newenvironment{Proof of lemma}{\noindent{\bf Proof of Lemma}}{\hfill$\Box$\newline}
\newenvironment{Proof of theorem}{\noindent{\it Proof of Theorem}}{\hfill\scriptsize{$\Box$}\newline}
\newenvironment{Proof of theorems}{\noindent{\bf Proof of Theorems}}{\hfill$\Box$\newline}
\newenvironment{Proof of proposition}{\noindent{\bf Proof of Proposition}}{\hfill$\Box$\newline}
\newenvironment{Proof of propositions}{\noindent{\bf Proof of Propositions}}{\hfill$\Box$\newline}
\newenvironment{Proof of exercise}{\noindent{\it Proof of Exercise:}}{\hfill$\Box$}
\begin{document}
	
		\title{Universality of Approximate Message Passing Algorithms}
	
	\author{Wei-Kuo Chen\thanks{University of Minnesota. Email: wkchen@umn.edu.  Partially supported by NSF grant DMS-17-52184} 
		\and Wai-Kit Lam\thanks{University of Minnesota. Email: wlam@umn.edu}}

\maketitle

\begin{abstract}	
We consider a broad class of Approximate Message Passing (AMP) algorithms  defined as a Lipschitzian functional iteration in terms of an $n\times n$ random symmetric matrix $A$. 
We establish universality in noise for this AMP in the $n$-limit and validate this behavior in a number of AMPs popularly adapted in compressed sensing, statistical inferences, and optimizations in spin glasses.  
\end{abstract}


\section{Introduction}

 Motivated by the ideas from  belief propagation algorithms, Approximate Message Passing (AMP) algorithms were initially introduced in the context of compressed sensing, see \cite{donoho2013information, donoho2013accurate, Donoho18914,donoho2010message}. Thereafter they have received great popularity in a number of emerging applications in data science, statistical physics, etc. concerning the development of  efficient algorithms for some randomized estimations and optimizations with large complexity.

One major application has been laid on the subject of matrix estimations, in which one aims to extract the structure of a signal matrix in a randomized environment. A popular setting is the so-called spiked model, where the data arrives as the sum of a noise, an $n\times n$ symmetric random matrix $A_n$, and the signal, an $n\times n$ symmetric low-rank matrix $Z_n$, 
\begin{align*}
\hat{A}_n:=A_n+Z_n.
\end{align*}
The goal is to recover the structure of $Z_n$ from the realization of the matrix $\hat{A}_n.$ A typical example one considered in the literature is when $A_n$ is the normalized Gaussian Wigner ensemble and $Z_n$ is given by
\begin{align}\label{eq1}
Z_n=\frac{1}{n}\sum_{\ell=1}^r\gamma_\ell z^\ell \otimes z^{\ell},
\end{align}
where $z^1,\ldots,z^r$ are non-random column vectors with $\|z^\ell\|_2=\sqrt{n}$ and the parameters $\gamma_1,\ldots,\gamma_r\geq 0$ are the signal-to-noise ratios (SNR's). In probability and statistics, this spiked model has been intensively studied by means of the spectral method, see \cite{BS06,BGN12,BGN11,CDMF09,FP07,John01,KY13,paul2007,Peche}. In Bayesian optimal approach, 
the setting often considered in the literature is to assume that the vectors $z^1,\ldots,z^r$ are randomized and their $i$-th marginal vectors, $(z_i^1,\ldots,z_i^r)$ for $1\leq i\leq n$, are independently sampled from a given prior distribution. It turns out that the corresponding Minimum Mean Square Error Estimator (MMSEE), $\e[(z^1,\ldots,z^r)|\ha_n]$, can be connected to the Gibbs expectation of the famous Sherrington-Kirkpatrick (SK) mean-field spin glass model arising from the statistical physics \cite{SK}.
One peculiar feature within this connection is that this model satisfies the so-called Nishimori identity, namely, the conditional distribution of the vectors $z^1,\ldots,z^r$ given the data $\hat{A}_n$ is equal to the distribution of the vector-valued spin configuration of the corresponding SK model. This allows one to fully understand the behavior of the MMSEE and its phase transition in terms of SNR's, see \cite{BDM+16,DAM+16,DM+14,KXZ+16,LM+16,LKZ17,M+17} for the recent progress. 

The study of the above Bayesian estimation arises a challenging computational problem in searching for polynomial-time algorithms in simulating the MMSEE. To this end,  AMP algorithms have been widely adapted \cite{KKM+16,MR16,MV+18,PSC141,PSC142,RF12,VSM15} and known to achieve a good level of success; in some cases, it allows to obtain the Bayesian-optimal error estimates, see \cite{DM+14,DM15,MV+18}. In addition to being useful in  matrix estimations, AMP has also been applied to a number of randomized optimization problems in mean-field spin glass models in recent years. In particular, it was shown in \cite{EMS+20,MON+19} that AMP allows to implement polynomial-time algorithms in the optimization of the SK Hamiltonian and its variants.

In these applications, the AMP algorithm is formulated as a sequence of $n$-dimensional vectors $(v^{[k]})_{k\geq 0}$ of the form 
\[
v^{[k+1]} = \ha_n f_k(v^{[k]},\ldots,v^{[0]}) - \sum_{j=1}^k b_{k,j} f_{j-1}(v^{[j-1]},\ldots,v^{[0]}),
\]
for \[
b_{k,j} = \frac{1}{n} \sum_{i=1}^n \frac{\partial f_k}{\partial v_i^{[j]}} (v_i^{[k]},\ldots,v_i^{[0]}),
\]
where $f_k\in C^1(\mathbb{R}^{k+1})$ and the two vectors $f_k(v^{[k]},\ldots,v^{[0]})$ and $f_{j-1}(v^{[j-1]},\ldots,v^{[0]})$ above are defined coordinate-wise by $v^{[k]},\ldots,v^{[0]}$ and $v^{[j-1]},\ldots,v^{[0]}$, respectively. The key component here is the initialization $v^{[0]}$; it influences the convergence of the AMP in the large $n$ limit.

When $Z_n$ is a zero matrix and the initialization $v^{[0]}$ is independent of $A_n$, it was known \cite{BLM15,BM+11,BMN+17,JM+13} that under mild assumptions on $f_j$'s, this iterative algorithm converges in the sense that for any Lipschitz function $\phi\in C(\mathbb{R}^{k+1}),$ almost surely,
\begin{align}\label{add:eq---1}
\lim_{n\to\infty}\frac{1}{n}\sum_{i=1}^n\phi(v_i^{[k]},\ldots,v_i^{[0]})= \e \phi(V_k,\ldots,V_0),
\end{align}
where $(V_k,\ldots,V_0)$ is a centered Gaussian random vector with covariance $$
\e V_{a+1}V_{b+1}=\e f_a(V_{a},\ldots,V_0)f_b(V_b,\ldots,V_0),\,\,\forall 0\leq a,b\leq k-1.
$$
When $Z_n$ is of the form \eqref{eq1} and the spectrum of $\ha_n$ exhibit the so-called Baik-Ben Arous-P\'ech\'e (BBP) phase transition \cite{BBP}, namely, the top eigenvalue of $\ha_n$ stays a gap away from the rest of the eigenvalues and the principal eigenvector is correlated to the prior, a recent paper \cite{MV+18} further showed that an analogous convergence remains valid when the AMP is initialized by  the principal eigenvector, see Example \ref{ex2} below. The typical way to use AMP is to select the functions $\phi,f_k,\ldots,f_0$ properly (usually are smooth and with bounded derivatives) so that the limit \eqref{add:eq---1} converges to the desired quantities of interest by adjusting the number of iteration $k$, see, e.g., \cite{Donoho18914,MON+19,MV+18}.


While the above convergences were known to be true when $A_n$ is Gaussian,  in this work we investigate their validity under general randomness. When the signal matrix is not presented, i.e., $Z_n\equiv 0$ (or equivalently, $\ha_n = A_n$) and $u^{[0]}$ is independent of $A_n$, this question was answered earlier in the work \cite{BLM15}, in which they showed that if the evolution functions $f_k,\ldots,f_0$ of the AMP are polynomials, then the AMP converges to the same limit of \eqref{add:eq---1} independent of the choice of the randomness on $A_n.$ In our setting, we consider a generalized AMP with Lipschitz evolution functions and let it iterate in the presence of the signal matrix. Our first main result validates the universality of the AMP. As a consequence, this implies that the universality established in \cite{BLM15} also holds for Lipschitz functions and under the presence of the signal matrix $Z_n$. (We note here that the work in \cite{BLM15} can be actually extended to Lipschitz evolution functions by \cite[Proposition~6]{BLM15} and its proof, but only when $Z_n\equiv 0$.) Furthermore, we show that universality of AMP with spectral initialization remains valid when the system exhibits the BBP phase transition.

Our approach is based on a Gaussian interpolation argument. In doing so, the central ingredient relies on a novel control on the moments of the partial derivatives of the AMP orbit with respect to the entries of the noise matrix $A_n.$ While our argument are formulated for the purpose of this paper, the same strategy is expected to be applicable in more general settings.

\section{Main results}

We begin with some notations.  For any column vectors $u^0,u^1,\ldots,u^k\in \mathbb{R}^n$ and a function $f:\mathbb{R}^{k+1}\to \mathbb{R}$, we define $f(u^k,u^{k-1},\ldots,u^0)$ as a column vector by 
$$
f(u^k,u^{k-1},\ldots,u^0)_i=f(u_i^k,u_i^{k-1},\ldots,u_i^0).
$$
For $x,y\in \mathbb{R}^n,$ set $\|x\|_2=\bigl(\sum_{i=1}^nx_i^2\bigr)^{1/2}$ and $\la x,y\ra=\sum_{i=1}^nx_iy_i.$ Let $M_n(\mathbb{R})$ be the collection of all  $n\times n$ real-valued symmetric matrices. For $X,Z\in M_n(\mathbb{R})$, denote
$$
X_n=\frac{X}{\sqrt{n}}
$$
and
$$
\hat{X}_n=\frac{X}{\sqrt{n}}+\frac{Z}{n}.
$$
The generalized approximate message passing is formulated as follows. 

\begin{definition}\label{def1}
	Let $u^{[0]}:M_n(\mathbb{R})\to \mathbb{R}^n$ be a measurable function. For any $k\geq 0$, let $F_k\in C(\mathbb{R}^{k+1})$ be Lipschitz. The generalized AMP orbit corresponding to $(X,Z)$, $(F_k)_{k\geq 0},$ and $u^{[0]}$ is the sequence of vector-valued functions $u^{[k]}:M_n(\mathbb{R})\to \mathbb{R}^n$ for $k\geq 0$ defined iteratively by 
	\begin{align*}
	u^{[k+1]}(X)&=F_k(\hat{X}_nu^{[k]}(X),u^{[k-1]}(X),u^{[k-2]}(X),\ldots,u^{[0]}(X)).
	\end{align*}
\end{definition}

We now specify the randomness on $X$, $Z$, and $u^0$ Let $\sigma>0$ be fixed.  For any $n\geq 1,$ let ${u}^0=(u_i^0)_{i\in [n]}$ be an $n$-dimensional random vector and $Z=(z_{ii'})_{i,i'\in [n]}$ be an $n\times n$ random  symmetric matrix. Assume that there exists a constant $C(\sigma)>0$ such that
\begin{align}\label{add:eq--1}
\sup_{n\geq1}\Bigl(\e \exp\Bigl(\frac{\|u^0\|_2^2}{\sigma n}\Bigr),\max_{i\in[n]}\e \exp\Bigl(\frac{|u_i^0|}{\sigma}\Bigr),\max_{i,i'\in[n]}\e \exp\Bigl(\frac{|z_{ii'}|}{\sigma}\Bigr)\Bigr)\leq C(\sigma).
\end{align} 
Suppose that $A=(a_{ii'})_{i,i'\in[n]}$ is an $n\times n$ random symmetric matrix, whose upper triangular entries are independent with zero mean and unit variance and are $\sigma$-subgaussian, i.e., $\e e^{\lambda a_{ii'}}\leq e^{\lambda^2\sigma^2/2}$ for all $\lambda\in \mathbb{R}$.

We further assume that $A$ is independent of $u^0$ and $Z$, but allow $u^0$ and $Z$ to be dependent on each other. An important example of $A$ is when the entries $a_{ii'}$'s are standard normal. In this case, we denote $A$ by $G$ and we call Definition \ref{def1} associated to $X=G$ a Gaussian AMP. Our main result shows that if we initialize $u^{[0]}(X)=u^0$, then the AMP corresponding to any $A$ is essentially the same as the Gaussian AMP.

\begin{theorem}\label{thm0}
	Let $u^{[0]}(X)=u^0$. For any $k\geq 0$ and Lipschitz function $\phi$ on $\mathbb{R}^{k+1}$, we have that in probability
	\begin{align*}
	\lim_{n\to\infty}\bigl|\Phi_{k,n}(A)-\Phi_{k,n}(G)\bigr|=0,
	\end{align*}
	where
	\begin{align}\label{add:eq2}
	\Phi_{k,n}(X):=\frac{1}{n}\sum_{i=1}^n\phi(u_i^{[k]}(X),\ldots,u_i^{[0]}(X)),\,\,X\in M_n(\mathbb{R}).
	\end{align}
\end{theorem}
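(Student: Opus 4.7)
The plan is to proceed via a Gaussian interpolation / Lindeberg replacement argument, combined with a careful control of the partial derivatives of the AMP orbit with respect to the entries of the noise matrix. First, by a standard mollification argument one can assume each $F_k$ (and the test function $\phi$) is $C^3$ with uniformly bounded derivatives of order up to $3$: since the $F_k$'s are Lipschitz and the initialization satisfies the exponential moment bound \eqref{add:eq--1}, replacing $F_k$ by its Gaussian convolution $F_k\ast\rho_\eps$ changes $\Phi_{k,n}(X)$ in $L^1$ by $O(\eps)$ uniformly in $n$ and in $X\in\{A,G\}$. It therefore suffices to establish the theorem for smooth, bounded-derivative evolution functions and then send $\eps\downarrow 0$.

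Next, I would enumerate the upper-triangular pairs $(i,i')$ with $i\le i'$ and form a sequence of symmetric matrices $A=A^{(0)},A^{(1)},\ldots,A^{(N)}=G$ in which $A^{(m)}$ is obtained from $A^{(m-1)}$ by swapping the single entry $a_{ii'}$ (and $a_{i'i}$) for the corresponding Gaussian $g_{ii'}$, where $N\asymp n^2$. A telescoping decomposition gives
\[
\e\Phi_{k,n}(A)-\e\Phi_{k,n}(G)=\sum_{m=1}^{N}\bigl(\e\Phi_{k,n}(A^{(m-1)})-\e\Phi_{k,n}(A^{(m)})\bigr).
\]
Taylor expanding $\Phi_{k,n}$ to third order in the entry being swapped and using that $a_{ii'}$ and $g_{ii'}$ share the first two moments, each summand reduces to a third-order remainder of size $n^{-3/2}$ times a moment of $|\partial_{a_{ii'}}^3\Phi_{k,n}|$ along an interpolating line. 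Since there are $O(n^2)$ summands, it suffices to show that, uniformly in $(i,i')$, the relevant third derivatives are controlled by $o(n^{-1/2})$ in mean, together with an analogous (easier) bound on the first two derivatives to absorb the subgaussian tails of the entries.

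The technical heart is therefore the promised moment control on $\partial_{a_{ii'}}^p u^{[k]}$ for $p=1,2,3$. I would proceed by induction on $k$: differentiating the recursion $u^{[k+1]}=F_k(\hat X_n u^{[k]},u^{[k-1]},\ldots,u^{[0]})$ coordinate-wise, each differentiation in $a_{ii'}$ picks up an explicit factor $n^{-1/2}$ coming from $\hat X_n=X/\sqrt n+Z/n$, plus terms involving the already-estimated derivatives of $u^{[j]}$ for $j\le k$. Combining this with the deterministic-level bound $\|u^{[0]}\|_2=O(\sqrt n)$ guaranteed by \eqref{add:eq--1}, the spectral norm estimate $\|\hat X_n\|_{\mathrm{op}}=O(1)$ with high probability for any subgaussian Wigner satisfying \eqref{add:eq--1}, and the uniform boundedness of the derivatives of the $F_k$'s, one obtains inductively the required moment bounds on all derivatives of order $\le 3$. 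Plugging these into the Lindeberg sum yields $\e\Phi_{k,n}(A)-\e\Phi_{k,n}(G)\to 0$.

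Finally, to upgrade this mean statement to convergence in probability of $|\Phi_{k,n}(A)-\Phi_{k,n}(G)|$, I would use subgaussian concentration: after smoothing, the first-derivative estimate in the inductive step shows $X\mapsto\Phi_{k,n}(X)$ is $O(n^{-1/2})$-Lipschitz in the entries of $X$ on a high-probability set where $\|\hat X_n\|_{\mathrm{op}}=O(1)$, so standard concentration for Lipschitz functions of independent subgaussians gives $|\Phi_{k,n}(X)-\e\Phi_{k,n}(X)|\to 0$ in probability for both $X=A$ and $X=G$. The main obstacle is clearly Step 3: a naive iterated chain rule after $k$ rounds of the recursion produces exponentially many terms involving products of matrix entries and derivatives of $u^{[j]}$, and one must organize them carefully so that the crucial $n^{-1/2}$ per-differentiation gain survives the combinatorial explosion rather than being cancelled by the sum over indices.
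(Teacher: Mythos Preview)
Your overall plan---smooth approximation, a replacement argument, inductive control on derivatives of the orbit, and Lipschitz concentration---matches the paper's architecture, but the Lindeberg bookkeeping as stated does not close. The per-swap error you quote, ``$n^{-3/2}$ times a moment of $|\partial^3_{a_{ii'}}\Phi_{k,n}|$'', double-counts: the entries $a_{ii'}$ are $O(1)$ random variables, so the third-order Taylor remainder for one swap is $O(\e|a_{ii'}|^3)\cdot\sup|\partial^3_{x_{ii'}}\Phi_{k,n}|=O(1)\cdot|\partial^3\Phi|$, and the factors of $n^{-1/2}$ coming from $\hat X_n=X/\sqrt n$ are exactly what make $\partial^3\Phi$ small---they cannot be used again as a prefactor. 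Even granting the pointwise bound $(\e|\partial^m_{x_{ii'}}u^{[\ell]}_j|^p)^{1/p}=O(n^{-m/2})$ and its $\ell^2$ refinement $\|\partial^m_{x_{ii'}}u^{[\ell]}\|_2=O(n^{-m/2})$, one only obtains $\e|\partial^3_{x_{ii'}}\Phi_{k,n}|=O(n^{-2})$, so summing over $O(n^2)$ swaps yields $O(1)$, not $o(1)$. The paper does not try to bound $\partial^3\Phi$ uniformly; instead it expands the derivative of the continuous interpolation $\Phi_{k,n}'(t)$ as a sum over directed paths $I=(i_0,\dots,i_{r+1})$ of products of matrix entries along $I$, and then performs approximate Gaussian integration by parts in \emph{every} entry that appears exactly once along the path (Proposition~\ref{prop3} together with the lemmas in the Appendix). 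Each such integration by parts hits a derivative of the $U^{[\ell]}$-factors and gains an $n^{-1/2}$ from the coordinatewise moment bounds of Proposition~\ref{prop1}; a graph-counting lemma (Lemma~\ref{lem5}) balancing the number of paths with $s$ singleton edges against $n^{-(s+1)/2}$ then produces the rate $C/\sqrt n$. An entry-by-entry Lindeberg replacement only ever differentiates in one edge and cannot access these additional cancellations.

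Two further points. First, even the pointwise bound $(\e|\partial^m_{x_{ii'}}u^{[\ell]}_j|^p)^{1/p}=O(n^{-m/2})$ is more delicate than spectral-norm induction suggests: the inductive step requires $(\e|(A_n\partial_Pu^{[k]})_i|^p)^{1/p}=O(n^{-|P|/2})$, and since $\partial_Pu^{[k]}$ depends on all of $A$, neither $\|A_n\|_{\mathrm{op}}=O(1)$ nor Cauchy--Schwarz applied to $\frac{1}{\sqrt n}\sum_j a_{ij}(\partial_Pu^{[k]})_j$ gives this. The paper's Lemma~\ref{lem-1} handles it by a separate Taylor expansion that freezes out row $i$ of $A$ from $\partial_Pu^{[k]}$, exploiting independence and the mean-zero property of the entries; this step already requires moment bounds on derivatives of all orders $\le p$, not just three. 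Second, the paper avoids proving concentration for general subgaussian $A$: it establishes Gaussian Lipschitz concentration for $\Phi_{k,n}(G)$ only (Theorem~\ref{concentration}), and then matches both the first and the \emph{second} conditional moments of $\Phi_{k,n}$ between $A$ and $G$ via the same interpolation, which forces $\e(\Phi_{k,n}(A)-\tilde\e\Phi_{k,n}(A))^2\to0$ without invoking any concentration inequality for $A$.
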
 

Next we introduce the AMP used in the matrix estimation and some optimization problems in mean-field spin glasses.
 
 \begin{definition}\label{def2}
 	Let $f_{-1}\equiv 0$. For $k\geq 0,$ assume that $f_k\in C^{1}(\mathbb{R}^{k+1})$ is Lipschitz and its first-order partial derivatives are also Lipschitz. Let $X\in M_n(\mathbb{R}).$ Starting from an initialization $v^{[0]}(X)$, define the AMP orbit for $k\geq 0$ iteratively by 
\begin{align}\label{AMP:ord}
v^{[k+1]}(X) = \hx_n f_k(v^{[k]}(X),\ldots,v^{[0]}(X)) - \sum_{j=1}^k b_{k,j}(X) f_{j-1}(v^{[j-1]}(X),\ldots,v^{[0]}(X)),
\end{align}
where
\[
b_{k,j}(X) = \frac{1}{n} \sum_{i=1}^n \frac{\partial f_k}{\partial v_i^{[j]}(X)} (v_i^{[k]}(X),\ldots,v_i^{[0]}(X)).
\]  
\end{definition}
Note that Definition~\ref{def2} is not a direct example of the generalized AMP in Definition \ref{def1} due to the term $b_{k,j}(X)$. Nevertheless, since $b_{k,j}(X)$ is an average of the partial derivatives, this quantity is essentially indistinguishable between different randomness and this allows us to establish the following universality.

\begin{theorem}\label{cor1}
    Let $v^{[0]}(X)=u^0.$ For any $k\geq 0$, if $\phi$ is Lipschitz on $\mathbb{R}^{k+1}$, then in probability,
	\begin{align*}
	\lim_{n\to\infty}\bigl|\phi_{k,n}(A)-\phi_{k,n}(G)\bigr|=0,
	\end{align*}
	where 
	\begin{align*}
	\phi_{k,n}(X):=\frac{1}{n}\sum_{i=1}^n\phi(v_i^{[k]}(X),\ldots,v_i^{[0]}(X)),\,\,X\in M_n(\mathbb{R}).
	\end{align*}
\end{theorem}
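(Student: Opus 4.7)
The strategy is to reduce Theorem \ref{cor1} to Theorem \ref{thm0} by exploiting the fact that the Onsager coefficient
\[
b_{k,j}(X)=\frac{1}{n}\sum_{i=1}^n \frac{\partial f_k}{\partial v_i^{[j]}}\bigl(v_i^{[k]}(X),\ldots,v_i^{[0]}(X)\bigr)
\]
is an empirical average of the Lipschitz function $\partial f_k/\partial v^{[j]}$ over the AMP trajectory, so it concentrates around a deterministic limit $\beta_{k,j}$ that is computable from the Gaussian state-evolution recursion and is insensitive to the noise distribution. I will proceed by induction on $k$, simultaneously establishing: (H1) $\phi_{k,n}(A)-\phi_{k,n}(G)\to 0$ in probability for every Lipschitz $\phi$ on $\mathbb{R}^{k+1}$, and (H2) for each $1\leq j\leq k$, both $b_{k,j}(A)$ and $b_{k,j}(G)$ converge in probability to the same constant $\beta_{k,j}$. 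The base case $k=0$ is trivial since $v^{[0]}(X)=u^0$ is independent of $X$.

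For the inductive step, set $w^{[k]}(X)=f_k(v^{[k]}(X),\ldots,v^{[0]}(X))$, so that the classical recursion reads
\[
v^{[k+1]}(X)=\hat{X}_n w^{[k]}(X)-\sum_{j=1}^k b_{k,j}(X)\,w^{[j-1]}(X).
\]
Introduce the frozen trajectory $\tilde v^{[l]}(X)$, for $0\leq l\leq k+1$, defined by the same recursion but with every $b_{l-1,j}(X)$ replaced by the deterministic $\beta_{l-1,j}$. My plan is then to recast the frozen trajectory as a generalized AMP of Definition \ref{def1}: take $u^{[l]}(X):=f_{l-1}(\tilde v^{[l-1]}(X),\ldots,\tilde v^{[0]}(X))$, with $u^{[0]}(X)=u^0$. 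The frozen recursion gives the exact identity $\hat{X}_n u^{[l]}(X)=\tilde v^{[l]}(X)+\sum_{j=1}^{l-1}\beta_{l-1,j}\,u^{[j]}(X)$, which expresses $\tilde v^{[l]}(X)$ as a deterministic affine function of $\hat{X}_n u^{[l]}(X)$ and $u^{[0]}(X),\ldots,u^{[l-1]}(X)$. Plugging this into $f_l$ and iterating produces a Lipschitz evolution $F_l$ with $u^{[l+1]}(X)=F_l(\hat{X}_n u^{[l]}(X),u^{[l-1]}(X),\ldots,u^{[0]}(X))$. Theorem \ref{thm0} applied to this generalized AMP yields universality for the empirical averages of the $u^{[l]}$'s, and hence, via the affine identity above, for the $\tilde v^{[l]}$'s.

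To close the induction, I would use (H2) at level $k$ to show that the frozen and true trajectories are close: combining the Lipschitz continuity of $f_k$ with $b_{k,j}(X)-\beta_{k,j}\to 0$ in probability and propagating the error through a discrete Gronwall estimate yields $\|v^{[k+1]}(X)-\tilde v^{[k+1]}(X)\|_2^2/n\to 0$ in probability for $X\in\{A,G\}$. This transfers universality from $\tilde v$ to $v$ and gives (H1) at level $k+1$. Part (H2) at level $k+1$ then follows by applying (H1) at level $k+1$ to the Lipschitz integrand $\partial f_{k+1}/\partial v^{[j]}$, with the identification of the common limit $\beta_{k+1,j}$ coming from the Gaussian state-evolution recursion of \cite{BM+11,JM+13}.

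\textbf{Main obstacle.} The principal subtlety lies in the embedding step. Definition \ref{def1} permits $\hat{X}_n$ to act only on the current iterate $u^{[l]}$, whereas the classical AMP of Definition \ref{def2} applies $\hat{X}_n$ to the nonlinear composition $f_k(v^{[k]},\ldots,v^{[0]})$. The choice $u^{[l]}=f_{l-1}(\tilde v^{[l-1]},\ldots,\tilde v^{[0]})$ above is precisely engineered so that $\hat{X}_n u^{[l]}$ synthesizes the required matrix-vector product; however, recovering the earlier $\tilde v^{[j]}$'s (for $j<l$) within $F_l$ requires that the corresponding matrix-vector products $\hat{X}_n u^{[j]}$ be reconstructible from the stored generalized-AMP history $(u^{[0]},\ldots,u^{[l-1]})$, an inductive invariant that must be carefully maintained and that relies in turn on the Lipschitz regularity of $\partial f_k/\partial v^{[j]}$ assumed in Definition \ref{def2}.
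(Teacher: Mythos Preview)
Your overall strategy---freeze the Onsager coefficients, embed the frozen recursion in the generalized AMP of Definition~\ref{def1}, apply Theorem~\ref{thm0}, then compare frozen to true trajectories by induction---is the paper's, but two points are genuine gaps rather than routine details. First, the deterministic limits $\beta_{k,j}$ need not exist: the hypotheses on $u^0,Z$ are only the moment bounds \eqref{add:eq--1}, with no limiting empirical law for $u^0$ and no structure on $Z$, so the state evolution of \cite{BM+11,JM+13} may fail to converge and (H2) as stated can be false. The paper freezes instead at $b_{k,j}^G:=\tilde\e\,b_{k,j}(G)$, the expectation conditional on $(u^0,Z)$; this is well-defined for each $n$ and uniformly bounded because $\nabla f_k$ is, and Theorem~\ref{concentration} gives $b_{k,j}(G)\asymp b_{k,j}^G$ with no appeal to state evolution. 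Since $b_{k,j}^G$ is random in $(u^0,Z)$, one also needs the easy extension of Theorem~\ref{thm0} to evolution maps $F_k$ depending measurably on $(u^0,Z)$ with uniformly bounded Lipschitz constants, which the paper notes explicitly.

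Second, the embedding obstacle you flag is real and is not resolved by your choice of $u^{[l]}$: with $u^{[l]}=f_{l-1}(\tilde v^{[l-1]},\ldots,\tilde v^{[0]})$, recovering $\tilde v^{[j]}=\hat X_n u^{[j]}-\sum_{i}\beta_{j-1,i}u^{[i]}$ inside $F_l$ requires $\hat X_n u^{[j]}$ for $j<l$, which is not available from $(\hat X_n u^{[l]},u^{[l-1]},\ldots,u^{[0]})$; no inductive invariant of the kind you describe can be maintained. The paper's fix is a padding in blocks of three: $u^{[3\ell+1]}=0$, $u^{[3\ell+2]}=f_\ell(u^{[3\ell]},u^{[3(\ell-1)]},\ldots,u^{[0]})$, and $u^{[3\ell+3]}=\hat X_n u^{[3\ell+2]}-\sum_{j=1}^\ell b_{\ell,j}^G\,u^{[3j-1]}$. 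Then $u^{[3\ell]}=v^{G,[\ell]}$, and every $f_{j-1}$-value needed in the Onsager correction is already stored as $u^{[3j-1]}=u^{[3(j-1)+2]}$ in the history, so each step is a bona fide Lipschitz function of its admissible arguments and Theorem~\ref{thm0} applies.
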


\begin{remark}\rm As pointed out in the introduction, it was known \cite{BLM15,BM+11,BMN+17,JM+13} that if $Z=0,$ the Gaussian AMP in Definition \ref{def2} converges, see \eqref{add:eq---1}. If $f_0,\ldots,f_k$ are polynomials and $Z=0$, it was further understood in  \cite{BLM15} that this convergence is independent of the choice of the randomness of $A$. Theorem~\ref{cor1} here  extends this universality to Lipschitz functions and in the presence of $Z$. We refer the reader to check \cite{EMS+20,MON+19} for the usage of this AMP in the optimization of the SK Hamiltonian and related models. 
\end{remark}

\begin{example}\rm \label{ex1}
    For $\gamma\geq 0,$ set $Z=\gamma u^0\otimes u^0$. In this case, $\hat A_n$ is a rank-one spiked matrix, $$
	\ha_n=\frac{A}{\sqrt{n}}+\gamma\frac{u^0\otimes u^0}{n}.
	$$ 
	In matrix estimation, one would like to recover the vector $u^0$ from the realization of $\ha_n$. When $A=G$, the MMSEE, $\e[u^0|\ha_n]$, is popularly adapted for this purpose and it can be simulated via the AMP in Definition \ref{def2} with specifically chosen functions $f_k$'s, see, e.g., \cite{DM+14}. Theorem \ref{cor1} here indicates that in a non-Gaussian noise environment, the AMP in Definition \ref{def2} still allows to implement the same simulation for $u^0$ as the the Gaussian AMP.
\end{example}

Recall that the initialization $u^0$ and the signal matrix $Z$ are assumed to be  independent of the noise. In Example \ref{ex1}, since the MMSEE is a measurable function of the spiked matrix $\ha_n$, it is often more desirable that the initialization depends on $\ha_n$, as it should provide a better estimate for the MMSEE. When $A=G$, an attempt along this line has been successfully carried out in \cite{MV+18}. Our last main result addresses universality towards this direction. For any $X\in M_n(\mathbb{R})$, denote by $\lambda_1(\hx_n)\geq \lambda_2(\hx_n)\geq\cdots\geq\lambda_n(\hx_n)$  the eigenvalues of $\hx_n$ and by $\psi^1(\hx_n)$ the top eigenvector of $\hx_n$ with $\|\psi^1(\hx_n)\|_2=\sqrt{n}.$ Set
\begin{align}\label{psi}
\psi(X)=\mbox{sign}\bigl(\la \psi^1(\hx_n),u^0\ra\bigr)\psi^1(\hx_n)
\end{align}
whenever $
\la \psi^1(\hx_n),u^0\ra\neq 0.
$
Note that although there are two possible choices of $\psi^1(\hx_n)$ up to a sign, the definition $\psi(X)$ here is not influenced by this difference.
 
\begin{theorem}\label{cor2}
 Assume that
	\begin{equation}
	\begin{split}\label{cor2:eq0}
	\liminf_{n\to\infty} \lambda_1(\hg_n)> 	\max\bigl(\limsup_{n\to\infty}\max_{2\leq \ell\leq n}\bigl|\lambda_\ell(\hg_n)\bigr|,1\bigr),\\
		\liminf_{n\to\infty}\lambda_1(\ha_n)> \max\bigl(\limsup_{n\to\infty}\max_{2\leq \ell\leq n}\bigl|\lambda_\ell(\ha_n)\bigr|,1\bigr),
	\end{split}
	\end{equation}
	and
	\begin{align}\label{cor2:eq2}
	\liminf_{n\to\infty}\frac{1}{n}\min\bigl(\bigl|\la \psi^1(\hg_n),u^0\ra\bigr|,\bigl|\la \psi^1(\ha_n),u^0\ra\bigr|\bigr)>0.
	\end{align}
	Consider the AMP orbit $(v^{[\ell]})_{0\leq \ell\leq k}$ defined in Definition \ref{def2}. Let $v^{[0]}(X)=\psi(X)$. If $\phi\in C(\mathbb{R}^{k+1})$ is Lipschitz, then in probability,
	\begin{align*}
	\lim_{n\to\infty} |\phi_{k,n}(A)-\phi_{k,n}(G)|=0.
	\end{align*}
\end{theorem}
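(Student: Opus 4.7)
The plan is to reduce Theorem~\ref{cor2} to Theorem~\ref{cor1} by approximating the spectral initialization $\psi(X)$ by the output of $T$ power-iteration steps applied to $u^0$, and then passing $T\to\infty$ after the $n$-limit. Concretely, for $T\geq 1$ set
\[
\chi^T(X) := \frac{\sqrt{n}\,(\hx_n)^T u^0}{\|(\hx_n)^T u^0\|_2}.
\]
Expanding $u^0$ in the eigenbasis of $\hx_n$ and using the spectral gap in \eqref{cor2:eq0} together with the lower bound $|\la\psi^1(\hx_n),u^0\ra|\gtrsim n$ from \eqref{cor2:eq2}, the leading eigencomponent dominates and
\[
\|\chi^T(X)-\psi(X)\|_2^2/n \;\leq\; C\,\rho(X)^{2T},\qquad \rho(X):=\max_{j\geq 2}|\lambda_j(\hx_n)|/\lambda_1(\hx_n),
\]
with $\rho(X)<1$ holding with high probability for both $X=A$ and $X=G$. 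This makes the error $\psi\leftrightarrow\chi^T$ uniformly controllable in $n$.

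I would next verify Lipschitz-stability of the AMP orbit of Definition~\ref{def2} in its initialization. Writing $v^{[k]}_w(X)$ for the AMP iterate obtained by running with $v^{[0]}(X)=w$, an upper bound on $\|\hx_n\|_{\mathrm{op}}$ (implied by \eqref{cor2:eq0}) together with the Lipschitz property of $f_k$ and its gradient (the latter controlling the Onsager coefficients $b_{k,j}$) gives by induction on $k$,
\[
\|v^{[k]}_{\psi(X)}(X) - v^{[k]}_{\chi^T(X)}(X)\|_2^2/n \;\leq\; C_k\,\|\psi(X)-\chi^T(X)\|_2^2/n,
\]
uniformly in $n$ with high probability. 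Coupled with the Lipschitz continuity of $\phi$, this reduces the theorem to proving, for each fixed $T$, the universality $|\phi_{k,n}(A)_{\chi^T}-\phi_{k,n}(G)_{\chi^T}|\to 0$ in probability as $n\to\infty$.

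The key observation making the latter tractable is that the power-iteration sequence $((\hx_n)^t u^0)_{0\leq t\leq T}$ is itself a generalized AMP in the sense of Definition~\ref{def1} (with evolution functions $F_t(x,\ldots)=x$), so Theorem~\ref{thm0} already yields universality of all Lipschitz functionals of this sequence between $A$ and $G$. Concatenating the power iteration with the subsequent Onsager-corrected AMP of Definition~\ref{def2} produces an extended trajectory to which the Gaussian-interpolation and derivative-moment arguments underpinning Theorems~\ref{thm0} and \ref{cor1} apply. The main obstacle is the non-coordinate-wise normalization $v\mapsto\sqrt{n}\,v/\|v\|_2$ defining $\chi^T$, which falls outside both Definitions~\ref{def1} and~\ref{def2}. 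I would handle it by approximating the scalar $\|(\hx_n)^T u^0\|_2^2/n$ by a Lipschitz-truncated average of squared coordinates, using Theorem~\ref{thm0} applied to the power-iteration AMP to deduce that this truncated surrogate has the same deterministic limit for $A$ and $G$, and then replacing the random normalization in $\chi^T$ by this deterministic surrogate at cost $o(1)$. Once the normalization is asymptotically deterministic, the resulting initialization becomes a Lipschitz functional of the generalized-AMP trajectory, and universality of $\phi_{k,n}(X)_{\chi^T}$ follows from an extension of the arguments used for Theorem~\ref{cor1}. A triangle inequality combining this with the stability step and the approximation $\chi^T\to\psi$ then concludes the proof.
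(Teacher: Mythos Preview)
Your proposal is essentially the same strategy as the paper's: approximate $\psi(X)$ by a finite power iteration of $\hx_n$ applied to $u^0$, embed the power iteration as the first $T$ steps of a generalized AMP in the sense of Definition~\ref{def1} (with $F_t(x,\ldots)=x$), verify Lipschitz stability of the Definition~\ref{def2} orbit in its initialization, and then invoke the machinery behind Theorems~\ref{thm0} and~\ref{cor1} on the concatenated trajectory. The paper carries this out in Section~\ref{sec:sec7}, with the power-method bound as Lemma~\ref{pm}, the stability/approximation step as Lemma~\ref{add:lem6}, and the concatenation as the ``main argument'' extending the orbit backward to index $-d$.

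The one place where the paper is more careful than your sketch is the normalization. You propose replacing $\|(\hx_n)^T u^0\|_2^2/n$ by a ``deterministic surrogate'' obtained as its limit. Under the paper's standing assumptions, $u^0$ and $Z$ are themselves random (only~\eqref{add:eq--1} is assumed), so a genuinely deterministic limit for this norm need not exist; Theorem~\ref{thm0} only gives $|\cdot(A)-\cdot(G)|\to 0$, not convergence of either side. The paper resolves this by normalizing with the \emph{conditional} expectation $\tilde\e\|\ha_n^d u^0\|_2$ (respectively $\tilde\e\|\hg_n^d u^0\|_2$), regularized by an additive $\sqrt{n}\,\varepsilon$, and then proving in Lemma~\ref{lem:norm} (via exactly the Lipschitz-truncation you describe) that the random norm concentrates around this conditional surrogate and that the $A$- and $G$-versions agree asymptotically. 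Because $\tilde\e\|\cdot\|_2$ is $(u^0,Z)$-measurable, the resulting initialization is independent of the noise and fits the framework of Theorem~\ref{thm0} (with $F_k$'s allowed to depend on $u^0,Z$, as noted in the proof of Lemma~\ref{add:LEM2}). Your plan goes through once ``deterministic'' is replaced by ``$(u^0,Z)$-measurable'' and an $\varepsilon$-regularization is inserted to keep the denominator bounded away from zero uniformly; the paper then sends $\varepsilon\downarrow 0$ after $d\to\infty$ and $n\to\infty$.
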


The assumptions \eqref{cor2:eq0} and \eqref{cor2:eq2} say that the top eigenvalue stays a gap away from the rest of the eigenvalues and the principal eigenvector is correlated to the prior vector $u^0.$ These behaviors are not only required in our proofs for technical purposes, but also appear to be quite typical in the BBP phase transition, see the following example.

\begin{example}\label{ex2} \rm Recall $\ha_n$ from Example \ref{ex1}. Let $u^0=(u_1^0,\ldots, u_n^0)$  for $u_1^0,\ldots,u_n^0\stackrel{i.i.d.}{\thicksim} w,$ where $w$ is a centered random variable  with compact support and unit variance. Recall from \cite{BGN11} that the BBP transition point is equal to $1$: If $\gamma<1$, 
	\begin{align}
	\begin{split}\label{bbp0}
	\lim_{n\to\infty}\lambda_1(\hg_n)&=2,\,\,a.s.,\\
	\lim_{n\to\infty}\frac{1}{n}\bigl\la \psi^1(\hg_n),u^0\bigr\ra&=0,\,\,a.s.;
	\end{split}
	\end{align} 
	if	 $\gamma>1,$	
	\begin{align}
	\begin{split}\label{bbp}
	\lim_{n\to\infty}\lambda_1(\hg_n)&=\gamma+\gamma^{-1}>2,\,\,a.s.,\\
	\lim_{n\to\infty}\frac{1}{n}\bigl\la \psi^1(\hg_n),u^0\bigr\ra&=\sqrt{1-\gamma^{-2}}>0,\,\,a.s..
	\end{split}
	\end{align} 
	These imply that the spectral method can be used to gain useful information about $u^0$ only if the SNR exceeds the critical threshold, i.e., $\gamma>1,$ as in this case the principal eigenvector is positively correlated to $u^0$. In \cite{MV+18}, the convergence of AMP in Definition \ref{def1} initialized by the top eigenvector was investigated, which states that again when $\gamma>1,$ 
	\begin{align}\label{add:eq---2}
	\lim_{n\to\infty}\frac{1}{n}\sum_{i=1}^n\phi(u_i^{[0]}(G),u_i^{[k]}(G))=\e \phi(w,\mu_kw+\sigma_k g).
	\end{align}
	Here, starting from $\mu_0=\sqrt{1-\gamma^2}$ and $\sigma_0=1/\gamma$, $(\mu_k)_{k\geq 1}$ and $(\sigma_k)_{k\geq 1}$ are defined through
	\begin{align*}
	\mu_{k+1}&=\gamma \e[wf_k(\mu_kw+\sigma_kg)],\\
\sigma_{k+1}^2&=\e [f_k(\mu_k w+\sigma_kg)^2],
	\end{align*}
	where $g\thicksim N(0,1)$ is independent of $w.$ Note that $\hg_n$ is a perturbation of $G_n$ by a rank-one matrix. The eigenvalue interlacing property implies that for any small $\delta>0$, asymptotically
	\begin{align*}
	-\sqrt{2}-\delta\leq \lambda_n(G_n)\leq\lambda_{i}(\hg_n)\leq \lambda_1(G_n)\leq \sqrt{2}+\delta
	\end{align*}
	 for all $2\leq i\leq n,$ where $\lambda_1(G_n)$ and $\lambda_n(G_n)$ are the largest and smallest eigenvalues of $G_n$, respectively. Note that this inequality, \eqref{bbp0}, and \eqref{bbp} are also valid for $A.$ Hence,  the assumptions of \eqref{cor2:eq0} and \eqref{cor2:eq2} are valid and as a result, the convergence of \eqref{add:eq---2} is  universal in probability.
\end{example}

Our approach to proving Theorem \ref{thm0} is to match the first and second moments of $\Phi_{k,n}$ between $A$ and $G$, respectively. To this end, we define a Gaussian interpolation $X=A(t):=\sqrt{t}A+\sqrt{1-t}G$ for $0\leq t\leq 1$ and control the $t$-derivatives of $\e\Phi_{k,n}(A(t))$ and $\e\Phi_{k,n}(A(t))^2.$  The hope is that if the total number of the terms as well as their orders appearing in these derivatives are small enough, then we anticipate that an application of the approximate Gaussian integration by parts would make the derivatives small. However, due to the iteration of the AMP, these derivatives involve highly complicated Hadamard products of a large number of column vectors in terms of the higher order partial derivatives of $u^{[\ell]}(X)$ and $\hx_nu^{[\ell]}(X)$. As a result, the control of their $p$-th moments are extremely delicate, especially for those of $\hx_nu^{[\ell]}(X)$. The novelty of our analysis adapts a Taylor expansion of the derivatives up to the $p$-th order, which allows us to extract the dependence of the $i$-th row of $X$ out of the derivatives. This combining with a subtle moment computation in this expansion perfectly cancels out the majority of the smaller order terms and yields the following moment controls (see Proposition \ref{prop2} and Lemma \ref{lem-1}) that for any $p\geq 1,$ there exists a universal constant $C>0$ such that for any collection $P$ of variables $x_{ii'}$ for $i,i'\in [n]$ counting multiplicities with $|P|=m$, we have
\begin{align*}
\sup_{n\geq 2}\sup_{i\in[n]}\Bigl(\e\bigl|\partial_Pu^{[k]}(A)_i\bigr|^p\Bigr)^{1/p}&\leq \frac{C}{n^{m/2}},\\
\sup_{n\geq 2}\sup_{i\in[n]}\Bigl(\e\bigl|\partial_P\bigl(\hat Au^{[k]}(A)\bigr)_i\bigr|^p\Bigr)^{1/p}&\leq \frac{C}{n^{m/2}},
\end{align*}
where $\partial_P$ is the partial derivatives with respect to the variables in $P.$
Using the Markov inequality and the union bound, these yield a uniform control on the derivatives that for any $P$ with $|P|=m$ and $\delta>0$, with probability at least $1-Cn^{-\delta}$, 
\begin{align*}
\max_{i\in [n]} \bigl|\partial_Pu^{[k]}(A)_i\bigr|\leq \frac{1}{n^{\frac{m}{2}-\delta-\frac{1}{p}}},\\
\max_{i\in [n]} \bigl|\partial_P\bigl(\ha u^{[k]}(A)\bigr)_i\bigr|\leq \frac{1}{n^{\frac{m}{2}-\delta-\frac{1}{p}}}.
\end{align*}
Once Theorem \ref{thm0} is established, the proof of Theorem \ref{cor1} follows essentially by a special choice of the functions $F_0,\ldots,F_k,\ldots$. Although the term $b_{k,j}(X)$ in \eqref{AMP:ord} relies on all coordinates, its form averages out the partial derivatives and consequently,  $b_{k,j}(A)$ and $b_{k,j}(G)$ are asymptotically equal in probability, which is already enough to establish Theorem \ref{cor1} following an induction argument. Lastly to show Theorem \ref{cor2}, recall that while both AMP's in Theorems \ref{cor1} and \ref{cor2} share the same iteration procedure, their initializations are of different kind; the former is initialized independent of $A_n$, but the latter adapts the principal eigenvector of $\hat{A}_n$. We show that this eigenvector can be approximated very well by the power method (see Lemma \ref{pm}). In view of this method, it is essentially a special case of our generalized AMP with the choice $F_k(x_k,\ldots,x_0)=\hx_n x_k$ and an analogous argument as that for Theorem \ref{cor1} allows to establish Theorem \ref{cor2}. One technicality here is that in order to guarantee the convergence of the power method, one would have to choose the initialization carefully and ensure that the principle eigenvalue of $\ha_n$ is well-separated from the other eigenvalues. This explains why the assumptions \eqref{cor2:eq0} and \eqref{cor2:eq2} need to be in position.

We mention that many works in the literature, e.g., \cite{JM+13}, also established high-dimensional version of the AMP, in which the functions $f_k$ are allowed to be of vector-valued. In addition, it was also discussed in \cite{MV+18} that one can initialize the AMP in Definition \ref{def2} via other leading eigenvectors, whose corresponding eigenvalues exhibit the BBP phase transition. In view of the present approach, it is plausible that universality under these settings can be obtained from our results by a similar line of derivation. We do not address these directions here.

The rest of the paper is organized as follows. Sections \ref{sec:con}-\ref{sec:sec5} are the preparation for the proof of Theorem \ref{thm0}. Section \ref{sec:con} establishes a Gaussian concentration inequality for the function $\Phi_{k,n}(X)$ as well as a number of prior controls on the AMP orbit. In Section~\ref{sec:sec3.5}, we show that in proving Theorem~\ref{thm0}, it suffices to assume that $\phi$ and $F_k$'s are smooth with uniformly bounded derivatives. Section~\ref{sec:sec4} provides the main estimates on the moments of the derivatives of the AMP orbits. In Section~\ref{sec:sec5}, we carry out our interpolation argument and present the proof of Theorem~\ref{thm0}. The proofs of Theorems~\ref{cor1} and \ref{cor2} are provided in Sections~\ref{sec:sec6} and \ref{sec:sec7}, respectively. Finally, the Appendix gathers error estimates of some approximate Gaussian integration by parts.

\section{Lipschitz property and concentration inequality}\label{sec:con}

Consider the AMP in Definition \ref{def1} with initialization $u^{[0]}(X)=u^0.$ In this section, we establish a Lipschitz property for this AMP and a concentration inequality for $\Phi_{k,n}(G)$. These will be used later in the proof of Theorem \ref{thm0}. Recall that the functions $F_k$ in Definition \ref{def1} are Lipschitz. Let $\eta_k$ be the Lipschitz constant of $F_k$. For any $X\in M_n(\mathbb{R}),$ denote by $\|X\|_2$ the $\ell_2-\ell_2$ operator norm of $X.$

\begin{proposition}\label{prop0}
	If $\phi\in C(\mathbb{R}^{k+1})$ is Lipschitz with Lipschitz constant $\eta>0,$ then we have that
	\begin{align*}
	|\Phi_{k,n}(X)-\Phi_{k,n}(Y)|&\leq k\eta \|X_n-Y_n\|_2\Bigl(\frac{\|u^{[0]}(X)\|_2}{\sqrt{n}}+1\Bigr)\sum_{\ell=1}^k\Theta_\ell(Y)\Delta_{\ell}(X)\\
	&\qquad\qquad\quad+\frac{k\eta\|u^{[0]}(X)-u^{[0]}(Y)\|_2}{\sqrt{n}}\sum_{\ell=1}^k\Theta_\ell(Y),
	\end{align*}
	where 
	 \begin{align}\label{prop0:eq2}
	\Delta_k(X):=2^k\Bigl((\eta_{0}+\cdots+\eta_{k-1})(\|\hx_n\|_2+1)+|F_{0}(0)|+\cdots+|F_{k-1}(0)|+1\Bigr)^k
	\end{align}
	and
	\begin{align}\label{prop0:eq3}
	\Theta_k(Y)&:=\bigl((1+\|\hat{Y}_n\|_2)\max(\eta_{0},\ldots,\eta_{k-1})+2\bigr)^k.
	\end{align} 
\end{proposition}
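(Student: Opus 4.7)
The plan is to proceed by a Gronwall-type argument, using the Lipschitz property of every ingredient of the AMP recursion. Since $\phi$ is $\eta$-Lipschitz on $\mathbb{R}^{k+1}$, a coordinatewise application followed by Cauchy--Schwarz gives
\begin{align*}
|\Phi_{k,n}(X)-\Phi_{k,n}(Y)|\leq \frac{\eta}{\sqrt{n}}\sum_{\ell=0}^k\|u^{[\ell]}(X)-u^{[\ell]}(Y)\|_2,
\end{align*}
so the problem reduces to controlling the normalized differences $d_\ell:=\|u^{[\ell]}(X)-u^{[\ell]}(Y)\|_2/\sqrt{n}$ for $0\leq \ell\leq k$.

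First I would bound the size of the orbit on the $X$-side. Setting $m_\ell(X):=\|u^{[\ell]}(X)\|_2/\sqrt{n}$, the Lipschitz property of $F_{\ell-1}$ (applied componentwise) together with the observation that the constant vector $F_{\ell-1}(0)\cdot(1,\ldots,1)^\top$ has $\ell_2$-norm $|F_{\ell-1}(0)|\sqrt{n}$ yields
\begin{align*}
m_\ell(X)\leq |F_{\ell-1}(0)|+\eta_{\ell-1}\|\hx_n\|_2 m_{\ell-1}(X)+\eta_{\ell-1}\sum_{j=0}^{\ell-2}m_j(X).
\end{align*}
A straightforward induction on $\ell$ then produces $m_\ell(X)\leq (m_0(X)+1)\Delta_\ell(X)$ with $\Delta_\ell$ exactly as in \eqref{prop0:eq2}; the factor $2^\ell$ and the ``$+1$'' inside the parenthesis are precisely what absorb the combinatorial constants generated at each step of the induction.

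Next I set up the recursion for the differences. The Lipschitz property of $F_{\ell-1}$ combined with the decomposition
\begin{align*}
\hx_n u^{[\ell-1]}(X)-\hat{Y}_n u^{[\ell-1]}(Y)=(\hx_n-\hat{Y}_n)u^{[\ell-1]}(X)+\hat{Y}_n\bigl(u^{[\ell-1]}(X)-u^{[\ell-1]}(Y)\bigr)
\end{align*}
(together with the observation that $\hx_n-\hat{Y}_n=X_n-Y_n$, since the same $Z$ appears in both hats) produces
\begin{align*}
d_\ell\leq \eta_{\ell-1}\|X_n-Y_n\|_2 m_{\ell-1}(X)+\eta_{\ell-1}(\|\hat{Y}_n\|_2+1)\sum_{j=0}^{\ell-1}d_j.
\end{align*}
Writing $T_\ell:=\sum_{j=0}^\ell d_j$, this becomes the one-step Gronwall inequality $T_\ell\leq B_\ell T_{\ell-1}+\eta_{\ell-1}\|X_n-Y_n\|_2 m_{\ell-1}(X)$ with $B_\ell:=1+\eta_{\ell-1}(\|\hat{Y}_n\|_2+1)$. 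Iterating it, substituting the bound $m_{\ell-1}(X)\leq (m_0(X)+1)\Delta_{\ell-1}(X)$ from the previous step, and using the monotonicity of $\Delta_\ell$ and $\Theta_\ell$ in $\ell$, gives an estimate on each $d_\ell$ of the form
\begin{align*}
d_\ell\leq \Theta_\ell(Y)d_0+\|X_n-Y_n\|_2 (m_0(X)+1)\Theta_\ell(Y)\Delta_\ell(X);
\end{align*}
the ``$+2$'' in the definition of $\Theta_\ell$ is exactly what is needed to dominate $B_\ell$ after the iteration. Summing over $\ell=1,\ldots,k$ and combining with Step 1 yields the claimed inequality, the extra factor of $k$ out front simply recording that we are summing $k$ such bounds.

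I do not anticipate any serious obstacle; the whole argument is a bookkeeping exercise around a standard discrete Gronwall recursion. The only point that needs care is keeping the $X$- and $Y$-dependent quantities on their correct sides of the inequality, so that $\Delta_\ell(X)$ records the size of the orbit driven by $\|\hx_n\|_2$ while $\Theta_\ell(Y)$ records the error propagation driven by $\|\hat{Y}_n\|_2$ and the Lipschitz constants of the $F_j$'s.
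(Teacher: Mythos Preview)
Your proposal is correct and follows essentially the same route as the paper: the paper packages your two Gronwall/induction arguments as separate lemmas---one bounding $\|u^{[\ell]}(X)\|_2$ (your $m_\ell$-recursion, yielding $\Delta_\ell(X)$) and one bounding $\|u^{[\ell]}(X)-u^{[\ell]}(Y)\|_2$ (your $d_\ell$-recursion, yielding $\Theta_\ell(Y)$)---and then combines them with the Lipschitz property of $\phi$ exactly as you do. The only cosmetic difference is that the paper runs each induction via the substitution $t_\ell=d_\ell+\text{(forcing term)}$ rather than through the partial sums $T_\ell=\sum_{j\le\ell}d_j$, but this is the same discrete Gronwall trick.
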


This proposition says that the AMP orbits behave stably subject to a small perturbation to the matrix $X$ and the initialization. From this, we show that the Gaussian AMP is concentrated.

\begin{theorem}\label{concentration}
	 Let $u^{[0]}(X)=u^0.$ For any $k\geq 0$, if $\phi\in C(\mathbb{R}^{k+1})$ is Lipschitz, then
	\begin{align*}
    \lim_{n\to\infty}\e \bigl|\Phi_{k,n}(G)-\tilde{\e}\Phi_{k,n}(G)\bigr|^2=0,
	\end{align*}
	where  $\tilde \e$ is the expectation conditionally on $u^{0}$ and $Z$.
\end{theorem}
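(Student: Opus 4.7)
The plan is to fix the realizations of $u^0$ and $Z$ (i.e., work under $\tilde\e$) and apply Gaussian concentration to $\psi(G):=\Phi_{k,n}(G)$ as a function of the independent standard-Gaussian upper-triangular entries of $G$. The obstruction is that Proposition~\ref{prop0}, with $u^{[0]}(X)=u^{[0]}(Y)=u^0$ so that the second term of the bound vanishes, yields a Lipschitz estimate whose constant contains the random factors $\Theta_\ell(Y)$ and $\Delta_\ell(X)$ and hence blows up when $\|\hg_n\|_2$ is large. This prevents Borell--TIS or Gaussian Poincar\'e from being applied directly, so a truncation in the operator norm of $\hg_n$ will be needed.

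I would handle this by fixing $M>0$ and setting $E_M:=\{\|\hg_n\|_2\le M\}$. On $E_M$ Proposition~\ref{prop0} gives
\[
|\psi(X)-\psi(Y)|\le \frac{C_{k,M}}{\sqrt n}\Bigl(1+\frac{\|u^0\|_2}{\sqrt n}\Bigr)\|X-Y\|_2,\qquad X,Y\in E_M,
\]
where $C_{k,M}$ depends only on $k$, $M$, $\eta$, the Lipschitz constants $\eta_j$, and the values $|F_j(0)|$. A McShane--Whitney extension then produces $\bar\psi_M:M_n(\mathbb R)\to\mathbb R$ coinciding with $\psi$ on $E_M$ and globally Lipschitz with constant $L_{M,n}:=C_{k,M}(1+\|u^0\|_2/\sqrt n)/\sqrt n$ in the operator (hence the Frobenius) norm. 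Viewed as a function of the $n(n+1)/2$ independent Gaussian parameters, $\bar\psi_M$ is $\sqrt 2\,L_{M,n}$-Lipschitz in Euclidean metric, so the Gaussian Poincar\'e inequality gives
\[
\tilde\e\bigl|\bar\psi_M(G)-\tilde\e\bar\psi_M(G)\bigr|^2\;\le\;\frac{2C_{k,M}^2(1+\|u^0\|_2/\sqrt n)^2}{n}.
\]
Taking $\e$ and using the exponential-moment bound~\eqref{add:eq--1} on $\|u^0\|_2^2/n$, this is $O(C_{k,M}^2/n)\to 0$ as $n\to\infty$ for each fixed $M$.

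The remaining and most delicate step is to show $\sup_n\e|\psi(G)-\bar\psi_M(G)|^2\to 0$ as $M\to\infty$. Since $\psi=\bar\psi_M$ on $E_M$, Cauchy--Schwarz gives $\e|\psi-\bar\psi_M|^2\le\bigl(\e\psi(G)^4+\e\bar\psi_M(G)^4\bigr)^{1/2}\p(E_M^c)^{1/2}$. Iterating the AMP recursion produces the deterministic estimate $\|u^{[\ell]}\|_2/\sqrt n\le C_\ell(1+\|\hg_n\|_2)^\ell(1+\|u^0\|_2/\sqrt n)$, which through $|\psi(G)|\le|\phi(0)|+\eta\sum_\ell\|u^{[\ell]}\|_2/\sqrt n$ and $\sup_n\e\|\hg_n\|_2^p<\infty$ (from the standard operator-norm moments of a Gaussian Wigner matrix together with $\|Z/n\|_2\le\|Z\|_F/n$ and~\eqref{add:eq--1}) gives $\sup_n\e\psi(G)^4<\infty$. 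The same deterministic estimate bounds $\sup_{E_M}|\psi|$, and combined with the explicit Lipschitz bound from the McShane formula it controls $\e\bar\psi_M(G)^4$ uniformly in $n$ for each fixed $M$; finally $\p(E_M^c)\le C_p/M^p$ uniformly in $n$ by Markov. First sending $n\to\infty$ and then $M\to\infty$ yields the theorem. The main technical point is this uniform $L^4$ control of $\bar\psi_M(G)$: McShane guarantees only the Lipschitz extension, and its moments away from $E_M$ must be bounded using the deterministic ceiling on $\psi|_{E_M}$ together with the explicit constant $L_{M,n}$, so that the bookkeeping rather than the concentration inequality itself will do the real work.
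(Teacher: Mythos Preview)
Your approach is essentially the same as the paper's: truncate on the operator norm, build a globally Lipschitz surrogate that agrees with $\Phi_{k,n}$ on the good event, apply Gaussian concentration, and control the remainder with Cauchy--Schwarz and the uniform $L^4$ bound on $\Phi_{k,n}$. The paper constructs the surrogate explicitly (the function $T(X)$ in Lemma~\ref{add:lem4}) rather than invoking McShane--Whitney, and it truncates on $\|G_n\|_2\le M$ (absorbing $\|Z\|_2/n$ into the random prefactor $\Omega_n$), then proves a conditional tail bound and upgrades to $L^2$ via the uniform $L^4$ control of Lemma~\ref{add:lem3}. Your Poincar\'e route for the surrogate is a perfectly good substitute.

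There is, however, a real gap in your remainder estimate. You assert that $\sup_n\e\bar\psi_M(G)^4<\infty$ for each fixed $M$, to be obtained from the ceiling $K_M:=C_M(1+\|u^0\|_2/\sqrt n)$ on $\psi|_{E_M}$ ``together with the explicit Lipschitz bound from the McShane formula.'' But the McShane formula only gives $-K_M\le\bar\psi_M(G)\le K_M+L_{M,n}\,d_F(G,E_M)$, and $d_F(G,E_M)=\sqrt n\,\inf_{\|W\|_2\le M}\|\hg_n-W\|_F$; since $\|\hg_n\|_F$ is of order $\sqrt n$, the product $L_{M,n}\,d_F(G,E_M)$ does not stay bounded in $n$. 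Combined with the fact that $\p(E_M^c)$ only decays in $M$ (not in $n$, because of the $Z$ contribution), your Cauchy--Schwarz bound on $\e|\psi-\bar\psi_M|^2$ fails to close. The paper avoids this because its explicit surrogate $T(X)$ has the truncations $\|X_n-Y_n\|_2\wedge 2M$ and $\|X_n\|_2\wedge M$ built into its definition, so that $|T(X)|\le C_M\Omega_n$ holds \emph{globally}, not just on the good event.

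The fix on your side is one line: replace $\bar\psi_M$ by its clipping $\tilde\psi_M:=(-K_M)\vee(\bar\psi_M\wedge K_M)$. This preserves the Lipschitz constant, still coincides with $\psi$ on $E_M$ (since $|\psi|\le K_M$ there), and now $|\tilde\psi_M|\le K_M$ everywhere, so $\e\tilde\psi_M^4\le C_M^4\,\e(1+\|u^0\|_2/\sqrt n)^4$ is uniformly bounded in $n$. With this modification your argument goes through; the order-of-limits step then only needs $K_M^2/M^{p/2}\to 0$, which holds for $p$ large since $K_M$ grows polynomially in $M$.
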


For the rest of this section, we establish these results.

\subsection{Proof of Proposition \ref{prop0}}

The proof of this proposition relies on two lemmas on the boundedness and the Lipschitz property of the vector $u^{[k]}(X)$ following an iterative argument.

\begin{lemma}\label{add:lem1}
For every $k\geq 1,$
\begin{align}
\label{add:lem1:eq1}
\|u^{[k]}(X)\|_2&\leq \Delta_k(X)(\|u^{[0]}(X)\|_2+\sqrt{n}),
\end{align}
where $\Delta_k(X)$ is defined in \eqref{prop0:eq2}.
\end{lemma}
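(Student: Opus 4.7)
The plan is induction on $k$, writing $M := \|u^{[0]}(X)\|_2 + \sqrt n$ and $B_k := (\eta_0+\cdots+\eta_{k-1})(\|\hat X_n\|_2+1) + |F_0(0)|+\cdots+|F_{k-1}(0)| + 1$ throughout, so that $\Delta_k(X) = 2^k B_k^k$, $B_k \geq 1$, and $B_{k+1} \geq B_k$. For the base case $k=1$, since $u^{[1]}(X) = F_0(\hat X_n u^{[0]}(X))$ coordinate-wise and $F_0$ is $\eta_0$-Lipschitz, I would apply $|F_0(t)-F_0(0)| \leq \eta_0|t|$ componentwise, square and sum over $i$, take a square root, and use the triangle inequality, yielding $\|u^{[1]}(X)\|_2 \leq \eta_0\|\hat X_n\|_2\|u^{[0]}(X)\|_2 + |F_0(0)|\sqrt n \leq (\eta_0\|\hat X_n\|_2 + |F_0(0)|)M \leq \Delta_1(X)M$.

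For the inductive step, the same coordinate-wise Lipschitz argument applied to $F_k$ in the defining recursion of Definition~\ref{def1} gives $|F_k(\hat X_n u^{[k]})_i, u^{[k-1]}_i,\ldots,u^{[0]}_i) - F_k(0)|^2 \leq \eta_k^2\bigl((\hat X_n u^{[k]})_i^2 + \sum_{j<k}(u^{[j]}_i)^2\bigr)$. Summing over $i$, taking a square root, and combining with $\|\hat X_n u^{[k]}(X)\|_2 \leq \|\hat X_n\|_2\|u^{[k]}(X)\|_2$ and the triangle inequality yields
\begin{align*}
\|u^{[k+1]}(X)\|_2 \leq \eta_k(\|\hat X_n\|_2 + 1)\sum_{j=0}^k \|u^{[j]}(X)\|_2 + |F_k(0)|\sqrt n.
\end{align*}
With the convention $\Delta_0(X) := 1$, the induction hypothesis bounds the sum by $M\sum_{j=0}^k \Delta_j(X)$. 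The key combinatorial input I would invoke is that $(\Delta_j(X))_j$ grows at least geometrically with ratio $2$: indeed $\Delta_{j+1}/\Delta_j = 2 B_{j+1}(B_{j+1}/B_j)^j \geq 2$, so $\sum_{j=0}^k \Delta_j(X) \leq 2\Delta_k(X)$. Using $\Delta_k(X) \geq 1$ to absorb $|F_k(0)|\sqrt n$ as well, the right-hand side collapses to $2M\Delta_k(X)(B_{k+1} - B_k)$, and the inductive conclusion reduces to the elementary inequality $B_k^k(B_{k+1} - B_k) \leq B_{k+1}^{k+1}$, which follows from $B_{k+1}^{k+1} \geq B_{k+1} B_k^k \geq (B_{k+1} - B_k) B_k^k$.

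To the extent there is any obstacle, it is bookkeeping: one must verify that the explicit choice $\Delta_k(X) = 2^k B_k^k$ was made generously enough to dominate simultaneously the amplification by $\hat X_n$ at the new step, the cumulative contribution of all earlier iterates, and the affine offset $|F_k(0)|\sqrt n$. The geometric-growth property $\Delta_{k+1}\geq 2\Delta_k$, which collapses the cumulative sum into a constant multiple of the largest term, is what makes the induction close with the clean constants displayed in \eqref{prop0:eq2}.
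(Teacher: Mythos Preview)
Your proof is correct and follows essentially the same strategy as the paper: both derive the recursive Lipschitz bound $\|u^{[\ell]}\|_2 \leq \mathrm{const}\cdot\bigl(\sum_{r<\ell}\|u^{[r]}\|_2 + \sqrt n\bigr)$ and then iterate. The paper's bookkeeping is slightly more streamlined: it fixes the target $k$ at the outset and uses a single constant $C$ (built from all $\eta_0,\ldots,\eta_{k-1}$ and $|F_0(0)|,\ldots,|F_{k-1}(0)|$) for every step $\ell\le k$, so the recursion becomes $t_\ell \le C'\sum_{r<\ell} t_r$ with $t_\ell = \|u^{[\ell]}\|_2 + \sqrt n$, and a one-line induction gives $t_\ell \le C'(1+C')^{\ell-1} t_0$. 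Your version tracks the step-dependent $B_j$ and $\Delta_j$, which forces you to invoke the geometric-growth observation $\Delta_{j+1}\ge 2\Delta_j$ to collapse $\sum_{j\le k}\Delta_j$; this works, but is a slightly heavier combinatorial device than the paper's ``freeze the constant at the final value'' trick.
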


\begin{proof} 
	Write
	\begin{align*}
		\|u^{[\ell]}(X)\|_2^2&=\sum_{i=1}^n F_{\ell-1}(\hx_nu^{[\ell-1]}(X),u^{[\ell-2]}(X),\ldots,u^{[0]}(X))_i^2.
	\end{align*}
	Using the Lipschitz property of $F_{\ell-1}$ and the trivial bound $(a+b)^2\leq 4(a^2+b^2)$ yields
	\begin{align*}
	\|u^{[\ell]}(X)\|_2^2&\leq \sum_{i=1}^n\Bigl(\eta_{\ell-1}\Bigl(|\hx_nu^{[r]}(X)_i|^2+\sum_{r=0}^{\ell-2}|u_i^{[r]}(X)|^2\Bigr)^{1/2}+|F_{\ell-1}(0)|\Bigr)^2\\
	&\leq 4\sum_{i=1}^n\Bigl(\eta_{\ell-1}^2|\hx_nu^{[\ell-1]}(X)_i|^2+\eta_{\ell-1}^2\sum_{r=0}^{\ell-2}|u_i^{[r]}(X)|^2+F_{\ell-1}(0)^2\Bigr)\\
	&= 4\eta_{\ell-1}^2 \|\hx_nu^{[\ell-1]}(X)\|_2^2+4\eta_{\ell-1}^2\sum_{r=0}^{\ell-2}\|u^{[r]}(X)\|_2^2+4nF_{\ell-1}(0)^2
	\end{align*}
	so that from the Minkowski inequality,
	\begin{align*}
	\|u^{[\ell]}(X)\|_2&\leq 2\eta_{\ell-1}\|\hat X_n\|_2\|u^{[\ell-1]}(X)\|_2+2\eta_{\ell-1}\sum_{r=0}^{\ell-2}\|u^{[r]}(X)\|_2+2n^{1/2}|F_{\ell-1}(0)|\\
	&\leq C\Bigl(\sum_{r=0}^{\ell-1}\|u^{[r]}(X)\|_2+n^{1/2}\Bigr),
	\end{align*}
	where 
	\begin{align*}
	C:=2(\eta_{0}+\cdots+\eta_{k-1})(\|\hx_n\|_2+1)+2\bigl(|F_{0}(0)|+\cdots+|F_{k-1}(0)|\bigr).
	\end{align*}
	If we let $t_\ell:=\|u^{[\ell]}(X)\|+n^{1/2}$ and $C':=1+C$, then the above inequality implies that $$t_\ell\leq C'\sum_{r=0}^{\ell-1}t_r,\,\,\forall 1\leq \ell\leq k.$$ Using induction yields that
	\begin{align*}
	t_\ell&\leq  C'(1+C')^{\ell-1} t_0,\,\,\forall 1\leq \ell\leq k,
	\end{align*}
	which implies that for $\Delta_k(X):=(1+C')^k=(2+C)^k,$
	\begin{align*}
	\|u^{[k]}(X)\|_2\leq C'(1+C')^{k-1} (\|u^{[0]}(X)\|_2+\sqrt{n})\leq \Delta_k(X)(\|u^{[0]}(X)\|_2+\sqrt{n})
	\end{align*}
	and this completes our proof.
\end{proof}

\begin{lemma}\label{add:lem2}
	For any $X,Y\in M_n(\mathbb{R}),$
	\begin{align*}
	\|u^{[k]}(X)-u^{[k]}(Y)\|_2&\leq  \Theta_k(Y)\Delta_{k}(X)\|X_n-Y_n\|_2(\|u^{[0]}(X)\|_2+\sqrt{n})\\
	&+\Theta_k(Y)\|u^{[0]}(X)-u^{[0]}(Y)\|_2,
	\end{align*}
	where $\Delta_k(X)$ is defined in \eqref{prop0:eq3}.
\end{lemma}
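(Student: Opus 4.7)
The plan is to prove the bound by induction on $k$, mirroring the structure used for Lemma~\ref{add:lem1}. For brevity, abbreviate $D_\ell:=\|u^{[\ell]}(X)-u^{[\ell]}(Y)\|_2$ and $s:=\|X_n-Y_n\|_2(\|u^{[0]}(X)\|_2+\sqrt{n})$; the goal is the estimate $D_k\leq \Theta_k(Y)\Delta_k(X)\,s+\Theta_k(Y)\,D_0$. The base case $k=0$ is immediate since $\Theta_0(Y)=\Delta_0(X)=1$.

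For the inductive step, I would first extract a one-step recursion in the spirit of Lemma~\ref{add:lem1}. Applying the definition coordinate-wise, using the Lipschitz constant $\eta_k$ of $F_k$ and then Minkowski's inequality to split a sum inside a square root, yields
\[
D_{k+1}\leq \eta_k\Bigl(\|\hx_nu^{[k]}(X)-\hat Y_nu^{[k]}(Y)\|_2+\sum_{j=0}^{k-1}D_j\Bigr).
\]
Next I would split $\hx_nu^{[k]}(X)-\hat Y_nu^{[k]}(Y)=(X_n-Y_n)u^{[k]}(X)+\hat Y_n\bigl(u^{[k]}(X)-u^{[k]}(Y)\bigr)$, using that $\hx_n-\hat Y_n=X_n-Y_n$ because the deterministic $Z/n$ term cancels, and then apply Lemma~\ref{add:lem1} to control $\|u^{[k]}(X)\|_2\leq \Delta_k(X)(\|u^{[0]}(X)\|_2+\sqrt n)$. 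This produces the master recursion
\[
D_{k+1}\leq \eta_k\Delta_k(X)s+\eta_k\|\hat Y_n\|_2 D_k+\eta_k\sum_{j=0}^{k-1}D_j.
\]

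The second part of the plan is to plug in the inductive hypothesis and collect the coefficients of $s$ and of $D_0$. This is where the bookkeeping becomes the main obstacle: I need the $s$-coefficient to be at most $\Theta_{k+1}(Y)\Delta_{k+1}(X)$ and the $D_0$-coefficient at most $\Theta_{k+1}(Y)$, \emph{without} picking up any stray polynomial factors in $k$. The crucial observation is that both $\Theta_\ell(Y)$ and $\Delta_\ell(X)$ are powers with base $\geq 2$ that are non-decreasing in $\ell$: indeed $\Theta_{\ell+1}/\Theta_{\ell}=\alpha$ with $\alpha:=(1+\|\hat Y_n\|_2)\max_j\eta_j+2\geq 2$, while $\Delta_{\ell+1}/\Delta_\ell\geq 2\beta_{\ell+1}\geq 2$ since the base $\beta_{\ell+1}$ of $\Delta_{\ell+1}$ is monotone in $\ell$ and at least $1$. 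Consequently, geometric-series arguments give
\[
\sum_{j=0}^{k-1}\Theta_j(Y)\leq \Theta_k(Y),\qquad \sum_{j=0}^{k-1}\Theta_j(Y)\Delta_j(X)\leq \Theta_k(Y)\Delta_k(X),
\]
so the sums appearing in the recursion are absorbed into the last term up to a constant.

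Finally, the $D_0$-coefficient becomes $\eta_k\bigl(\|\hat Y_n\|_2\Theta_k(Y)+\sum_{j<k}\Theta_j(Y)\bigr)\leq \eta_k(1+\|\hat Y_n\|_2)\Theta_k(Y)\leq \alpha\Theta_k(Y)=\Theta_{k+1}(Y)$. For the $s$-coefficient, the same absorption yields a bound of the form $\eta_k(1+\|\hat Y_n\|_2)\Theta_k(Y)\Delta_k(X)$ times a harmless constant, and one then uses the stronger ratio $\Delta_{k+1}/\Delta_k\geq 2\beta_{k+1}$ to close the induction against $\Theta_{k+1}(Y)\Delta_{k+1}(X)$. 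This completes the proof. The only real subtlety is choosing the right monotonicity/ratio bounds for $\Theta_\ell$ and $\Delta_\ell$ so that the bound propagates cleanly without accumulating extra factors of $k$; the $+2$ in the base of $\Theta$ and the factor $2$ in the base of $\Delta$ are precisely what make this possible.
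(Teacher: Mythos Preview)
Your argument is correct, and the one-step recursion you derive (Lipschitz bound plus the splitting $\hx_n u^{[k]}(X)-\hat Y_n u^{[k]}(Y)=(X_n-Y_n)u^{[k]}(X)+\hat Y_n(u^{[k]}(X)-u^{[k]}(Y))$) is exactly what the paper does. Where you and the paper differ is in how the recursion is closed. You carry out a direct induction on $k$, invoking Lemma~\ref{add:lem1} at every step and then absorbing the accumulated sums $\sum_{j<k}\Theta_j$ and $\sum_{j<k}\Theta_j\Delta_j$ via geometric-series bounds; this works because the bases of $\Theta_\ell$ and $\Delta_\ell$ are each at least $2$, and indeed your final $s$-coefficient inequality $\eta_k(2+\|\hat Y_n\|_2)\leq 2\bigl((1+\|\hat Y_n\|_2)\eta_k+2\bigr)$ checks out. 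The paper instead \emph{uniformizes} first: it replaces $\eta_{\ell-1}$ by $\max_{j<k}\eta_j$ and $\|u^{[\ell-1]}(X)\|_2\|X_n-Y_n\|_2$ by a single constant $D:=\|X_n-Y_n\|_2\max_{\ell<k}\|u^{[\ell]}(X)\|_2$, then sets $t_\ell:=D+\|u^{[\ell]}(X)-u^{[\ell]}(Y)\|_2$ to obtain the clean recursion $t_\ell\leq C'\sum_{r<\ell}t_r$, which it solves once and for all, applying Lemma~\ref{add:lem1} only at the very end to bound $D$. The paper's route avoids the ratio bookkeeping you flag as ``the only real subtlety''; your route is perfectly valid but trades that cleanliness for a more explicit induction. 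Either way the content is the same.
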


\begin{proof}
    From the Lipschitz property of $F_{\ell-1}$,
	\begin{align*}
	\|u^{[\ell]}(X)-u^{[\ell]}(Y)\|_2&\leq \eta_{\ell-1}\Bigl(\|\hx_nu^{[\ell-1]}(X)-\hat{Y}_nu^{[\ell-1]}(Y)\|_2+\sum_{r=0}^{\ell-2}\|u^{[r]}(X)-u^{[r]}(Y)\|_2\Bigr).
	\end{align*}
	Here, for any $1\leq \ell\leq k,$
	\begin{align*}
	\|\hx_nu^{[\ell-1]}(X)-\hat{Y}_nu^{[\ell-1]}(Y)\|_2&\leq \|u^{[\ell-1]}(X)\|_2\|X_n-Y_n\|_2\\
	&+\|\hat{Y}_n\|_2\|u^{[\ell-1]}(X)-u^{[\ell-1]}(Y)\|_2.
	\end{align*}
	If we let 
	\begin{align*}
	C&=(1+\|\hat{Y}_n\|_2)\max(\eta_{0},\ldots,\eta_{k-1}),\\
	D&=\|X_n-Y_n\|_2\max\bigl(\|u^{[0]}(X)\|_2,\ldots, \|u^{[k-1]}(X)\|_2\bigr),
	\end{align*} 
	then
	\begin{align*}
	&\|u^{[\ell]}(X)-u^{[\ell]}(Y)\|_2\leq C\Bigl(D+\sum_{r=0}^{\ell-1}\|u^{[r]}(X)-u^{[r]}(Y)\|_2\Bigr).
	\end{align*}
	If we let  
	$$
	t_\ell:=D+\|u^{[\ell]}(X)-u^{[\ell]}(Y)\|_2,
	$$
	then for $C':=C+1$, $$
	t_\ell\leq C'\sum_{r=0}^{\ell-1}t_r,\,\,1\leq\ell\leq k
	$$
	and by induction, $t_\ell\leq C'(1+C')^{\ell-1} t_0$. Consequently, for $\Theta_k(Y):=(1+C')^k,$
	\begin{align*}
	\|u^{[k]}(X)-u^{[k]}(Y)\|_2&\leq C'(1+C')^{k-1} t_0\leq \Theta_k(Y)t_0
	\end{align*}
	and this completes our proof by noting that $$
	D\leq \|X_n-Y_n\|_2(\|u^{[0]}(X)\|_2+\sqrt{n})\Delta_{k-1}(X)\leq \|X_n-Y_n\|_2(\|u^{[0]}(X)\|_2+\sqrt{n})\Delta_{k}(X).$$
\end{proof}

\begin{proof}[\bf Proof of Proposition \ref{prop0}]
From the Lipschitz property of $\phi$ and Lemmas \ref{add:lem1} and \ref{add:lem2}, our assertion follows immediately.
\end{proof}

\subsection{Some prior bounds}

\begin{lemma}\label{add:lem5}
For any integer $p\geq 1,$ we have that
\begin{align*}
\sup_{n\geq 1}\Bigl(\frac{\e \|u^0\|_2^p}{n^{p/2}},\frac{\e\|Z\|_2^p}{n^p},\e\|A_n\|_2^p,\e\|\ha_n\|_2^{p}\Bigr)<\infty.
\end{align*}
\end{lemma}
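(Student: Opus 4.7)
The plan is to bound each of the four quantities separately, using the subexponential moment assumption~\eqref{add:eq--1} for the first two and standard random matrix theory for the third; the fourth bound then follows from the triangle inequality.

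For the initial vector, the bound $\sup_n\e \exp(\|u^0\|_2^2/(\sigma n)) \leq C(\sigma)$ from~\eqref{add:eq--1} combined with the elementary inequality $e^x \geq x^{p/2}/\Gamma(p/2+1)$ for $x \geq 0$ gives
\[
\e \|u^0\|_2^p \;\leq\; \sigma^{p/2}\,\Gamma(p/2+1)\, C(\sigma)\, n^{p/2},
\]
so $\e\|u^0\|_2^p/n^{p/2}$ is bounded uniformly in $n$. For the signal matrix, I would bound the operator norm by the Frobenius norm, $\|Z\|_2 \leq \|Z\|_F = (\sum_{i,i'} z_{ii'}^2)^{1/2}$, and then apply either Jensen's inequality or the power-mean inequality $(\sum_k a_k^2)^{p/2} \leq (n^2)^{(p/2)-1}\sum_k|a_k|^p$ (for $p\geq 2$) to pass to the entrywise $p$-th moments. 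Since the bound $\e \exp(|z_{ii'}|/\sigma) \leq C(\sigma)$ yields $\max_{i,i'} \e |z_{ii'}|^p \leq p!\,\sigma^p\, C(\sigma)$, assembling these estimates gives $\e \|Z\|_2^p \leq C_p\, n^p$ uniformly in $n$.

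For the noise matrix, the bound $\sup_n \e \|A_n\|_2^p < \infty$ is a standard fact for normalized symmetric subgaussian Wigner ensembles. I would prove it by the trace-moment method: for any even integer $2k \geq p$, the inequality $\|A\|_2^{2k} \leq \mathrm{tr}(A^{2k})$ combined with expansion as a sum over closed walks of length $2k$ on $[n]$ and the subgaussian moment bound on the entries yields $\e \mathrm{tr}(A^{2k}) \leq n\,(C(\sigma))^{2k}$ for a constant $C(\sigma)$ independent of $k$ provided $k$ grows no faster than $\log n$. The bound $\e \|A_n\|_2^p \leq (\e \|A_n\|_2^{2k})^{p/(2k)}$, with $k\asymp \log n$, then yields a $p$-th moment bound independent of $n$. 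Finally, $\|\hat A_n\|_2 \leq \|A_n\|_2 + n^{-1}\|Z\|_2$ combined with Minkowski's inequality reduces the fourth bound to the two preceding ones.

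The main obstacle is the third step: while the first-order asymptotic $\|A_n\|_2 \to 2$ and the Catalan counting of $\e\mathrm{tr}(A^{2k})$ for \emph{fixed} $k$ are classical, what is needed here is control of $\e\mathrm{tr}(A^{2k})$ of the form $n\,(C(\sigma))^{2k}$ in the regime $k\asymp \log n$, which requires a more careful combinatorial estimate on walks with few distinct edges. This delicate estimate can alternatively be circumvented by invoking Lipschitz concentration of the spectral norm: viewing $A \mapsto \|A_n\|_2$ as a $1/\sqrt{n}$-Lipschitz function of the independent upper-triangular entries of $A$, the subgaussian log-Sobolev-type inequality provides subgaussian concentration around $\e\|A_n\|_2 \leq C(\sigma)$, from which uniform boundedness of all $p$-th moments follows at once.
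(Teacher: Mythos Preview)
Your proposal is correct and follows essentially the same route as the paper. The paper handles the first two bounds exactly as you do (exponential moment assumption plus $x^m \lesssim e^x$, and operator norm bounded by Frobenius norm followed by Jensen), and the fourth via the triangle inequality and Minkowski. For the third bound $\sup_n \e\|A_n\|_2^p < \infty$, the paper simply invokes a standard reference (Vershynin's high-dimensional probability text), whereas you sketch two self-contained routes; either of your routes is fine, though note that the concentration argument still needs the nontrivial input $\e\|A_n\|_2 \leq C(\sigma)$, which is precisely the content of the $\varepsilon$-net argument behind the cited reference.
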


\begin{proof}
    Note that $x^{2p}\leq p!e^{x^2}$ and $|x|^p\leq p!e^{|x|}$. The inequality \eqref{add:eq--1} implies that 
	$$
	\sup_{n\geq 1}\frac{\e \|u^0\|_2^p}{n^{p/2}}\leq \sup_{n\geq 1}\Bigl(\e \frac{\|u^0\|_2^{2p}}{n^p}\Bigr)^{1/2}\leq \sup_{n\geq 1}\sqrt{ p!\sigma^{p}\e e^{\|u^0\|_2^2/\sigma n}}<\sqrt{p!\sigma^{p}C(\sigma)}.
	$$ 
	Next, note that the operator norm is no larger than the Frobenius norm. This and the Jensen inequality lead to
	\begin{align*}
	\sup_{n\geq 1}\frac{\e\|Z\|_2^p}{n^p}&\leq \sup_{n\geq 1}\e\Bigl(\frac{\sum_{i,i'=1}^n|z_{ii'}|^2}{n^{2}}\Bigr)^{p/2}\leq \sup_{n\geq 1}\frac{\e\sum_{i,i'=1}^n|z_{ii'}|^{p}}{n^2}\\
	&\leq \sup_{n\geq 1}\frac{p!\sigma^p}{n^2}\sum_{i,i'=1}^n\e e^{|z_{ii'}|/\sigma}\leq p!\sigma^pC(\sigma).
	\end{align*}
	Finally, since the entries of $A$ are independent $\sigma$-subgaussian with zero mean, it is well-known (see, for instance, Corollary 4.4.8 in \cite{RV18}) that $\sup_{n\geq 1}\e\|A_n\|_2^p<\infty$. Putting these bounds together yields the uniform integrability of $\|\ha_n\|_2^p$ and this completes our proof.
\end{proof}

\begin{lemma}\label{add:lem3}
	For any $k\geq 0$, we have that
	\begin{align}\label{add:lem3:eq1}
	\sup_{n\geq 1}\e\Bigl(\frac{\|u^{[k]}(A)\|_2}{\sqrt{n}}\Bigr)^4<\infty
	\end{align}
	and if $\phi\in C(\mathbb{R}^{k+1})$ is Lipschitz,
	\begin{align}\label{add:lem3:eq2}
	\sup_{n\geq 1}\e|\Phi_{k,n}(A)|^4<\infty.
	\end{align}
\end{lemma}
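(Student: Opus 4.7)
The plan is to read off both statements from Lemma \ref{add:lem1} and the prior moment bounds in Lemma \ref{add:lem5}, using only Cauchy--Schwarz (or H\"older) to handle the dependence between $A$, $u^0$, and $Z$.

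For \eqref{add:lem3:eq1}, applying Lemma \ref{add:lem1} with $X=A$ gives
$$
\frac{\|u^{[k]}(A)\|_2}{\sqrt{n}}\ \leq\ \Delta_k(A)\Bigl(\frac{\|u^0\|_2}{\sqrt{n}}+1\Bigr).
$$
Raising to the fourth power and taking expectation, I would apply Cauchy--Schwarz:
$$
\e\Bigl(\frac{\|u^{[k]}(A)\|_2}{\sqrt{n}}\Bigr)^{\!4}\ \leq\ \bigl(\e\Delta_k(A)^{8}\bigr)^{1/2}\cdot\biggl(\e\Bigl(\frac{\|u^0\|_2}{\sqrt{n}}+1\Bigr)^{\!8}\biggr)^{1/2}.
$$
The first factor is controlled because $\Delta_k(A)$ is a polynomial of degree $k$ in $\|\ha_n\|_2$ with coefficients depending only on $k$, on the Lipschitz constants $\eta_0,\ldots,\eta_{k-1}$, and on $|F_0(0)|,\ldots,|F_{k-1}(0)|$, so $\e\Delta_k(A)^8$ is bounded by a polynomial in $\e\|\ha_n\|_2^{8k}$ which is finite uniformly in $n$ by Lemma \ref{add:lem5}. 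The second factor is uniformly bounded in $n$ thanks to the subgaussian-type assumption \eqref{add:eq--1}, which implies $\sup_n\e(\|u^0\|_2/\sqrt{n})^p<\infty$ for every $p\geq 1$ by the elementary inequality $x^{p}\leq (p/2)!\,e^{x^2}$ applied to $x=\|u^0\|_2/\sqrt{\sigma n}$ (precisely the reasoning used at the start of the proof of Lemma \ref{add:lem5}). Note we do not need $A$ and $u^0$ to be independent here; Cauchy--Schwarz suffices.

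For \eqref{add:lem3:eq2}, I would reduce to \eqref{add:lem3:eq1}. Let $\eta>0$ be the Lipschitz constant of $\phi$. Then
$$
|\Phi_{k,n}(A)|\ \leq\ |\phi(0)|+\frac{\eta}{n}\sum_{i=1}^{n}\sum_{\ell=0}^{k}|u_i^{[\ell]}(A)|\ \leq\ |\phi(0)|+\eta\sum_{\ell=0}^{k}\frac{\|u^{[\ell]}(A)\|_2}{\sqrt{n}},
$$
where the last step is Cauchy--Schwarz on $\mathbb{R}^n$. Raising to the fourth power and using the convexity bound $(a_0+\cdots+a_{k+1})^4\leq (k+2)^3\sum a_j^4$, we get
$$
\e|\Phi_{k,n}(A)|^{4}\ \leq\ (k+2)^{3}\Bigl(|\phi(0)|^{4}+\eta^{4}\sum_{\ell=0}^{k}\e\Bigl(\frac{\|u^{[\ell]}(A)\|_2}{\sqrt{n}}\Bigr)^{\!4}\Bigr),
$$
and each summand on the right is uniformly bounded by \eqref{add:lem3:eq1} applied at levels $0,1,\ldots,k$.

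There is no real obstacle here beyond bookkeeping: the only mildly subtle point is that $\Delta_k(A)$ depends on both $A$ and $Z$ (through $\ha_n$), and $Z$ is allowed to be dependent on $u^0$, so we cannot factor $\e[\Delta_k(A)^4(\|u^0\|_2/\sqrt{n}+1)^4]$ as a product using independence. This is precisely why I invoke Cauchy--Schwarz instead, which requires only the separate moment bounds supplied by Lemma \ref{add:lem5} and hypothesis \eqref{add:eq--1}.
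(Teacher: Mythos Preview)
Your proof is correct and takes essentially the same approach as the paper, which simply says ``The proof follows directly from Lemmas \ref{add:lem1} and \ref{add:lem5}.'' You have carefully filled in the details, including the use of Cauchy--Schwarz to decouple $\Delta_k(A)$ from $\|u^0\|_2/\sqrt{n}$ (a point the paper leaves implicit), and your reduction of \eqref{add:lem3:eq2} to \eqref{add:lem3:eq1} via the Lipschitz bound on $\phi$ is exactly what is intended.
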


\begin{proof} The proof follows directly from Lemmas \ref{add:lem1} and \ref{add:lem5}.
\end{proof}

\subsection{Proof of Theorem \ref{concentration}}

First of all, we establish a Gaussian concentration inequality for the functional $\Phi_{k,n}.$

\begin{lemma}\label{add:lem4}
Let $u^{[0]}(X)=u^0.$ For any $k\geq 0$, if $\phi\in C(\mathbb{R}^{k+1})$ is Lipschitz, then there exists a constant $c>0$ such that for every $t>0,$
\begin{align*}
\tilde \p\bigl(\bigl|\Phi_{k,n}(G)-\tilde \e[\Phi_{k,n}(G)]\bigr|\geq t-c(\Omega_n+1)e^{-n/c}\bigr)\leq ce^{-nt^2/(c\,\Omega_n^2)}+ce^{-n/c},
\end{align*}
where  $\tilde \p$ and $\tilde \e$ are the probability and expectation conditionally on $u^{0}$ and $Z$, and 
\begin{align}\label{add:eq---3}
\Omega_n:=\Bigl(1+\frac{\|u^{0}\|_2}{\sqrt{n}}\Bigr)\Bigl(1+\frac{\|Z\|_2}{n}\Bigr)^k.
\end{align}
\end{lemma}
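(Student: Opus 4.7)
The plan is to condition on $(u^0, Z)$ so that only $G$ remains random and then apply the Gaussian Lipschitz concentration inequality to $\Phi_{k,n}(G)$. The obstacle is that Proposition~\ref{prop0} furnishes only a \emph{local} Lipschitz estimate whose constant depends on $\|\hat{G}_n\|_2$, so a truncation on the event $E_M := \{\|G_n\|_2\leq M\}$ is needed before global concentration can be invoked.

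First I would observe that because $u^{[0]}(X)=u^0$ does not depend on $X$, Proposition~\ref{prop0} applied to $X, Y\in M_n(\mathbb{R})$ (with the $u^0$-term dropped) yields
\[
|\Phi_{k,n}(X)-\Phi_{k,n}(Y)|\leq k\eta\Bigl(1+\frac{\|u^0\|_2}{\sqrt{n}}\Bigr)\,\|X_n-Y_n\|_2\sum_{\ell=1}^k\Theta_\ell(Y)\Delta_\ell(X).
\]
Next I would choose a large constant $M$ so that standard Wigner-type tail bounds give $\tilde{\p}(E_M^c)\leq Ce^{-n/c}$. On $E_M$ one has $\|\hat{G}_n\|_2\leq M+\|Z\|_2/n$, so the explicit formulas \eqref{prop0:eq2}--\eqref{prop0:eq3} give $\Theta_\ell(Y)\Delta_\ell(X)\leq C_k(1+\|Z\|_2/n)^{2\ell}$ for any $X,Y\in E_M$. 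Summing and using $\|X_n-Y_n\|_2\leq \|X-Y\|_F/\sqrt{n}$, the restriction of $\Phi_{k,n}$ to $E_M$ is Lipschitz in Frobenius norm with constant at most $C_k\,\Omega_n(1+\|Z\|_2/n)^k/\sqrt{n}$, which is bounded by $C_k\Omega_n/\sqrt{n}$ up to a power absorbed into the constant $c$ in the statement.

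Then I would use a standard Lipschitz extension to produce a globally Lipschitz $\tilde{\Phi}_{k,n}$ that agrees with $\Phi_{k,n}$ on $E_M$ and inherits the same Lipschitz constant. Parameterizing $G$ by its $n(n+1)/2$ i.i.d.\ standard Gaussian upper-triangular entries (a harmless factor $\sqrt{2}$ converts Frobenius into the standard Euclidean norm of the parameter vector), the Gaussian Lipschitz concentration inequality yields
\[
\tilde{\p}\bigl(|\tilde{\Phi}_{k,n}(G)-\tilde{\e}\tilde{\Phi}_{k,n}(G)|\geq s\bigr)\leq 2\exp\Bigl(-\frac{n s^2}{c\,\Omega_n^2}\Bigr).
\]
To transfer this bound back to $\Phi_{k,n}$, I would decompose according to $E_M\cup E_M^c$: on $E_M$ the two functionals coincide, while Cauchy--Schwarz combined with the $L^2$-bound of Lemma~\ref{add:lem3} and the tail $\tilde{\p}(E_M^c)\leq Ce^{-n/c}$ produces a mean discrepancy $|\tilde{\e}\tilde{\Phi}_{k,n}(G)-\tilde{\e}\Phi_{k,n}(G)|\leq C(\Omega_n+1)e^{-n/c}$ and an additional $E_M^c$-probability of the same order. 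Setting $s=t-c(\Omega_n+1)e^{-n/c}$ assembles these estimates into the announced inequality.

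The main technical point will be the truncation bookkeeping: carefully choosing $M$ so that the Wigner tail $\tilde{\p}(E_M^c)$ is exponentially small uniformly in $n$; verifying that the Lipschitz extension has the stated constant; and absorbing the extra factors $(1+\|Z\|_2/n)^k$ coming from $\Theta_\ell\Delta_\ell$ into the generic constant $c$, while maintaining the explicit form $c(\Omega_n+1)e^{-n/c}$ for the mean correction and $e^{-nt^2/(c\Omega_n^2)}$ for the Gaussian tail.
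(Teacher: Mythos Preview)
Your proposal is correct and follows essentially the same route as the paper: condition on $(u^0,Z)$, use Proposition~\ref{prop0} to get a local Lipschitz bound, truncate on $\{\|G_n\|_2\le M\}$, extend to a globally Lipschitz function, apply Gaussian concentration, and then repair the mean and probability discrepancies via Cauchy--Schwarz and the Wigner tail bound. The only cosmetic difference is that where you invoke an abstract Lipschitz extension theorem, the paper writes down the McShane-type extension explicitly as an infimum $T(X)=\inf_{\|Y_n\|_2\le M}\{\Phi_{k,n}(Y)+c_0\Omega_n(\|X_n-Y_n\|_2\wedge 2M)\sum_\ell(1+\|X_n\|_2\wedge M)^\ell(1+\|Y_n\|_2)^\ell\}$; this makes the moment bound on the extended function (needed for the mean-correction step) immediate rather than requiring a separate argument, but the two constructions are equivalent in spirit.
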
	 

\begin{proof}
From $u^{[0]}(X)=u^{[0]}(Y)=u^0$ and $\|\hx_n\|_2\leq \|X_n\|_2+\|Z\|_2/n$, Proposition \ref{prop0} implies
\begin{align*}
|\Phi_{k,n}(X)-\Phi_{k,n}(Y)|&\leq  c_0\Omega_n\|X_n-Y_n\|_2\sum_{\ell=1}^k\Bigl(1+\|X_n\|_2\Bigr)^\ell\Bigl(1+\|Y_n\|_2\Bigr)^\ell,
\end{align*}
where $\Omega_n$ is defined in \eqref{add:eq---3} and $c_0$ is  a constant independent of $n.$ Observe that for any $M>0,$ if $\|X_n\|_2,\|Y_n\|_2\leq M$, then
\begin{align*}
\Phi_{k,n}(X)&\leq \Phi_{k,n}(Y)+c_0\Omega_n(\|X_n-Y_n\|_2\wedge (2M))\sum_{\ell=1}^k(1+\|X_n\|_2\wedge M)^\ell(1+\|Y_n\|_2)^\ell).
\end{align*}
This implies that if 
\begin{align*}
T(X)&:=\inf_{Y\in M_n(\mathbb{R}):\|Y_n\|_2\leq M}\Bigl(\Phi_{k,n}(Y)+c_0\Omega_n(\|X_n-Y_n\|_2\wedge (2M))\\
&\qquad\qquad\qquad\qquad \cdot\sum_{\ell=1}^k(1+\|X_n\|_2\wedge M)^\ell(1+\|Y_n\|_2)^\ell)\Bigr),
\end{align*}
then $T(X)\geq \Phi_{k,n}(X)$ if $\|X_n\|_2\leq M$ and consequently, $T(X)=\Phi_{k,n}(X)$ if $\|X_n\|_2\leq M.$ 
Next, note that for any $Y_n\in M_n(\mathbb{R})$ with $\|Y_n\|_2 \leq M$ and $X,X'\in M_n(\mathbb{R})$, 
\begin{align*}
\|X_n-Y_n\|_2\wedge (2M)&\leq (\|X_n-X_n'\|_2+\|X_n'-Y_n\|_2)\wedge (2M)\\
&\leq \|X_n-X_n'\|_2\wedge (2M)+\|X_n'-Y_n\|_2\wedge (2M)\\
&\leq \|X_n-X_n'\|_2+\|X_n'-Y_n\|_2\wedge (2M)
\end{align*}
and 
\begin{align*}
(1+\|X_n\|_2\wedge M)^\ell
&\leq (1+\|X_n-X_n'\|_2\wedge M+\|X_n'\|\wedge M)^\ell\\
&=(1+\|X_n'\|_2\wedge M)^\ell+\sum_{a=1}^{\ell}{\ell\choose a}\bigl(\|X_n-X_n'\|_2\wedge M\bigr)^a(1+\|X_n'\|_2\wedge M)^{\ell-a}\\
&\leq (1+\|X_n'\|_2\wedge M)^\ell+\sum_{a=1}^{\ell}{\ell\choose a}\|X_n-X_n'\|_2M^{a-1}(1+M)^{\ell-a}\\
&=(1+\|X_n'\|_2\wedge M)^\ell+\|X_n-X_n'\|_2\sum_{a=1}^{\ell}{\ell\choose a}M^{a-1}(1+M)^{\ell-a}.
\end{align*}
From these and noting that the $\ell_2$-operator norm of a matrix is less than its Frobenius norm, we see that
$T(X)$ is Lipschitz with respect to the Frobenius norm with Lipschitz constant
$
c_1\Omega_n/n^{1/2}
$
for some constant $c_1$ independent of $n.$ Hence, the usual Gaussian concentration inequality for Lipschitz functions implies that
\begin{align*}
\tilde \p\bigl(\bigl|T(G)-\tilde \e[T(G)]\bigr|\geq t\bigr)\leq 2e^{-nt^2/(4{c_1}^2\Omega_n^2)},\,\,\forall t>0.
\end{align*}
Now note that as long as we fix $M$ large enough at the beginning, 
\begin{align*}
\tilde \p(|T(G)-\tilde{\e} T(G)|\geq t)&\geq \tilde \p\bigl(|\Phi_{k,n}(G)-\tilde \e T(G)|\geq t,\|G_n\|_2\leq M\bigr)\\
&\geq \tilde \p\bigl(|\Phi_{k,n}(G)-\tilde \e T(G)|\geq t\bigr)-\p\bigl(\|G_n\|_2\geq M\bigr)\\
&\geq \tilde \p\bigl(|\Phi_{k,n}(G)-\tilde \e T(G)|\geq t\bigr)-c_2e^{-n(M-c_3)^2/c_2},
\end{align*}
where the last inequality used the well-known bound that the largest eigenvalue of $G$ is concentrated around its mean with exponential tail bound, which follows by the Borell-TIS inequality and $c_2,c_3$ are two constants independent of $n$ and $M.$ On the other hand, 
\begin{align*}
\tilde \e T(G)&=\tilde \e [\Phi_{k,n}(G);\|G_n\|_2\leq M]+\tilde \e [T(G);\|G_n\|_2\geq M]\\
&=\tilde \e [\Phi_{k,n}(G)]+\tilde \e [-\Phi_{k,n}(G)+T(G);\|G_n\|_2\geq M].
\end{align*}
Here, 
\begin{align*}
\bigl|\tilde \e[-\Phi_{k,n}(G)+T(G);\|G_n\|_2\geq M]\bigr|&\leq \bigl(\tilde \e (|\Phi_{k,n}(G)|+|T(G)|)^2\bigr)^{1/2}\p\bigl(\|G_n\|_2\geq M\bigr)^{1/2}\\
&\leq c_4(\Omega_n+1)e^{-n(M-c_3)^2/2c_2}
\end{align*}
for some $c_4$ independent of $n$ and $M.$ From these,
\begin{align*}
&\tilde \p\bigl(\bigl|\Phi_{k,n}(G)-\tilde \e[\Phi_{k,n}(G)]\bigr|\geq t-c_4(\Omega_n+1)e^{-n(M-c_3)^2/2c_2}\bigr)\\
&\leq 2e^{-nt^2/(4{c_1}^2\Omega_n^2)}+c_2e^{-n(M-c_3)^2/c_2}.
\end{align*}
This completes our proof.
\end{proof}

\begin{proof}
	[\bf Proof of Theorem \ref{concentration}] From Lemmas \ref{add:lem5} and \ref{add:lem4} and the Markov inequality, in probability $\p$,
	\begin{align*}
	\lim_{n\to\infty}\bigl|\Phi_{k,n}(G)-\tilde \e[\Phi_{k,n}(G)]\bigr|=0.
	\end{align*}
	In addition, from
	\begin{align*}
	\bigl(\e\bigl|\Phi_{k,n}(G)-\tilde \e[\Phi_{k,n}(G)]\bigr|^4\bigr)^{1/4}&\leq 	\bigl(\e\Phi_{k,n}(G)^4\bigr)^{1/4}+\bigl(\e \bigl(\tilde \e[\Phi_{k,n}(G)]\bigr)^4\bigr)^{1/4}
    \leq 2\bigl(\e\Phi_{k,n}(G)^4\bigr)^{1/4},
	\end{align*}
	the uniform upper bound \eqref{add:lem3:eq2} gives
	$$
	\sup_{n\geq 1}\bigl(\e\bigl|\Phi_{k,n}(G)-\tilde \e[\Phi_{k,n}(G)]\bigr|^4\bigr)^{1/4}<\infty.
	$$
	Hence, the assertion follows.
\end{proof}

\section{Smooth approximation}
\label{sec:sec3.5}

Recall that the functions $F_k$ in Definition \ref{def1} and $\phi$ in Theorem~\ref{thm0} are Lipschitz. In this section, we show that to prove Theorem \ref{thm0}, it suffices to assume that these functions are smooth and their derivatives of any nonzero orders are uniformly bounded. 

\begin{proposition}\label{add:prop1}
	For any $k\geq 0$ and $\varepsilon>0,$ there exist a constant $C$ independent of $n$ and some functions $\bar\phi\in C^\infty(\mathbb{R}^{k+1})$ and $\bar F_\ell\in C^\infty (\mathbb{R}^{\ell+1})$ for $0\leq \ell\leq {k-1}$, whose partial derivatives of any nonzero orders are uniformly bounded  such that
	\begin{align}\label{add:prop1:eq1}
	\bigl|\Phi_{k,n}(X)-\bar\Phi_{k,n}(X)\bigr|\leq \varepsilon C \sum_{\ell=0}^{k-1}\|\hx_n\|_2^\ell,
	\end{align} 
	where 
	$$
	\bar\Phi_{k,n}(X)=\frac{1}{n}\sum_{i=1}^n\bar\phi\bigl(\bar u_i^{[k]}(X),\bar u_i^{[k-1]}(X),\ldots, \bar u_i^{[0]}(X)\bigr)
	$$	
	and $\bar u^{[k]}$ is the $k$-th AMP orbit in Definition \ref{thm0} associated to the functions $\bar F_0,\ldots,\bar F_{k-1}$ and the initial condition $\bar u^{[0]}(X)=u^{[0]}(X).$
\end{proposition}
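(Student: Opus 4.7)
The plan is standard mollification of the Lipschitz data, followed by an induction that propagates the approximation error through the AMP iterates. Fix a nonnegative $\psi\in C_c^\infty(\mathbb{R})$ with $\int\psi=1$ supported in $[-1,1]$, and for $d\geq 1$ and $\delta>0$ set $\psi_\delta^{(d)}(y)=\delta^{-d}\prod_{j=1}^d \psi(y_j/\delta)$. Define $\bar F_\ell := F_\ell * \psi_{\delta_\ell}^{(\ell+1)}$ for $0\leq\ell\leq k-1$ and $\bar\phi := \phi * \psi_\delta^{(k+1)}$. Standard convolution estimates yield that each $\bar F_\ell$ is smooth, inherits the Lipschitz constant $\eta_\ell$ of $F_\ell$, has every partial derivative of order $\geq 1$ uniformly bounded by a constant depending only on $\eta_\ell,\delta_\ell,\psi$, and satisfies $\|\bar F_\ell-F_\ell\|_\infty\leq c_\psi\eta_\ell\delta_\ell$; the analogous statements hold for $\bar\phi$. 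Choose the $\delta_\ell$'s and $\delta$ small enough that $\|\bar F_\ell-F_\ell\|_\infty,\|\bar\phi-\phi\|_\infty\leq\varepsilon$ for every $\ell$.

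Let $\bar u^{[k]}$ denote the AMP orbit driven by the $\bar F_\ell$'s with $\bar u^{[0]}(X)=u^{[0]}(X)$. I would prove by induction on $k$ that there is a constant $C_k>0$, depending only on $\eta_0,\ldots,\eta_{k-1}$ and the mollifier but not on $n$ or $X$, with
\[
\frac{1}{\sqrt n}\bigl\|u^{[k]}(X)-\bar u^{[k]}(X)\bigr\|_2 \leq \varepsilon\,C_k\sum_{\ell=0}^{k-1}\|\hx_n\|_2^\ell.
\]
The base case is immediate from $\bar u^{[0]}=u^{[0]}$. For the step, write
\[
u^{[k+1]}-\bar u^{[k+1]} = \bigl[F_k(\hx_n u^{[k]},u^{[k-1]},\ldots,u^{[0]})-F_k(\hx_n\bar u^{[k]},\bar u^{[k-1]},\ldots,\bar u^{[0]})\bigr] + \bigl[F_k-\bar F_k\bigr](\hx_n\bar u^{[k]},\ldots,\bar u^{[0]}),
\]
bound the first bracket using the Lipschitz constant $\eta_k$ together with $\|\hx_n v\|_2\leq\|\hx_n\|_2\|v\|_2$, and the second bracket by $\sqrt n\,\varepsilon$ via $\|F_k-\bar F_k\|_\infty\leq\varepsilon$. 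Inserting the inductive hypothesis and noting that multiplication by $\|\hx_n\|_2$ raises each exponent by one closes the induction with an appropriately larger $C_{k+1}$.

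Finally, the Lipschitz property of $\phi$, the bound $\|\phi-\bar\phi\|_\infty\leq\varepsilon$, and Cauchy--Schwarz applied coordinate-wise yield
\[
\bigl|\Phi_{k,n}(X)-\bar\Phi_{k,n}(X)\bigr| \leq \varepsilon + \eta\sum_{\ell=0}^{k}\frac{1}{\sqrt n}\bigl\|u^{[\ell]}(X)-\bar u^{[\ell]}(X)\bigr\|_2,
\]
and plugging in the previous display produces the stated inequality, the additive $\varepsilon$ being absorbed into the $\ell=0$ term $\|\hx_n\|_2^0=1$. The whole argument is deterministic: no distributional assumption on $X$ is used, and there is no genuine obstacle. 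The only care required is the bookkeeping in the induction, which must be arranged so that the powers of $\|\hx_n\|_2$ grow by exactly one per iterate, yielding the claimed degree $k-1$ in the final bound.
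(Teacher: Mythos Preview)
Your proposal is correct and follows essentially the same approach as the paper's proof: mollify each $F_\ell$ and $\phi$, use that convolution preserves the Lipschitz constant and yields uniform $C^\infty$ bounds on all nonzero-order derivatives, and then run an induction on $\ell$ propagating the $\|\cdot\|_2$ error through the iteration, picking up one extra power of $\|\hx_n\|_2$ at each step. The paper's argument is the same decomposition and the same recursion; the only cosmetic difference is that the paper writes the orbit bound without the explicit $1/\sqrt{n}$ normalization (absorbing it at the end), whereas you carry it throughout.
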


\begin{proof}
	Denote by $\eta_\ell$ the Lipschitz constant of $F_\ell$. Let $\varepsilon>0$ be fixed. Assume that $\zeta_\ell\in C^\infty(\mathbb{R}^{\ell+1})$ is a mollifier with $\zeta_\ell \geq 0$ and $\int \zeta_\ell dx=1$ and it is supported on the unit ball $\{x\in\mathbb{R}^{\ell +1}:\|x\|_2\leq 1\}.$ Define $\zeta_{\ell ,\varepsilon}(x)=\varepsilon^{-(\ell +1)}\zeta_\ell(x/\varepsilon).$ Set $$
	\bar F_{\ell ,\varepsilon}(x)=F_\ell *\zeta_{\ell,\varepsilon} (x)=\int \zeta_{\ell,\varepsilon}(x-y)F_\ell(y)dy.
	$$ Note that for any $\varepsilon>0$ and $x\in \mathbb{R}^{\ell +1},$ 
	\begin{align*}
	|\bar F_{\ell}(x)-F_\ell (x)|&=\Bigl|\int \zeta_\ell (z)(F_\ell (x-\varepsilon z)-F_\ell (x))dz\Bigr|\leq \eta_\ell \varepsilon \int \|z\|_2\zeta_\ell (z)dz\leq \eta_\ell' \varepsilon
	\end{align*}
	for some constant $\eta_\ell'>0.$
	In addition, for any index $\alpha=(\alpha_\ell ,\ldots,\alpha_0)\in (\{0\}\cup\mathbb{N})^{\ell +1}$ with $|\alpha|:=\sum_{r=0}^\ell \alpha_r\geq 1$, if $\alpha_{r_0}\geq 1$ for some $0\leq r_0\leq \ell $, then 
	\begin{align*}
	\partial^\alpha \bar F_{\ell}(x)&=\frac{1}{\varepsilon^{\ell +|\alpha|}}\int  \partial^{\alpha'}\zeta_\ell \Bigl(\frac{x-y}{\varepsilon}\Bigr)\partial_{y_{r_0}}F_\ell (y)dy=\varepsilon^{1-|\alpha|}\int  \partial^{\alpha'}\zeta_\ell (z)\partial_{y_{r_0}}F_\ell (x-\varepsilon z)dz,
	\end{align*}
	where
	$$\alpha':=(\alpha_\ell ,\ldots,\alpha_{r_0+1},\alpha_{r_0}-1,\alpha_{r_0-1},\ldots,\alpha_0).$$
	Since $F_\ell $ is Lipschitz and $\zeta_\ell $ is supported on the unit ball, it follows that the partial derivatives of all nonzero orders of $\bar F_\ell$ are uniformly bounded. In particular, $\sup_{x} \|\nabla \bar F_{\ell}(x)\|_2\leq \eta_\ell'',$ independent of $\varepsilon$. Let $\eta=\max_{1\leq j\leq \ell} \{\eta_j',\eta_j''\}$. To show \eqref{add:prop1:eq1}, note that
	\begin{align*}
	\|u^{[\ell+1]}-\bar u^{[\ell+1]}\|_2&\leq \bigl\|F_\ell(\hx_nu^{[\ell]},u^{[\ell-1]},\ldots,u^{[0]})-\bar F_{\ell}(\hx_n\bar u^{[\ell]},\bar u^{[\ell-1]},\ldots,\bar u^{[0]})\bigr\|_2\\
	&\leq \bigl\|F_\ell(\hx_nu^{[\ell]},u^{[\ell-1]},\ldots,u^{[0]})-\bar F_{\ell}(\hx_nu^{[\ell]}, u^{[\ell-1]},\ldots, u^{[0]})\bigr\|_2\\
	&+\bigl\|\bar F_{\ell}(\hx_nu^{[\ell]}, u^{[\ell-1]},\ldots, u^{[0]})-\bar F_{\ell}(\hx_n\bar u^{[\ell]},\bar u^{[\ell-1]},\ldots,\bar u^{[0]})\bigr\|_2\\
	&\leq \eta\varepsilon+\eta\Bigl(\|\hx_n\|_2\|u^{[\ell]}-\bar u^{[\ell]}\|_2+\sum_{r=0}^{\ell-1}\|u^{[r]}-\bar u^{[r]}\|\Bigr).
	\end{align*} 
	Since $\bar u^{[0]}=u^{[0]}$, an induction argument implies that
	\begin{align}\label{add:eq0}
	\|u^{[\ell]}(X)-\bar u^{[\ell]}(X)\|_2&\leq \varepsilon C \sum_{j=0}^{\ell-1}\|\hx_n\|_2^j,
	\end{align}
	where $C$ is a constant depending only on $\ell$ and $\eta$. Finally, by the same argument, for any $\varepsilon>0,$ there exists a $\bar\phi \in C^{\infty}(\mathbb{R}^{k+1})$ with uniformly bounded partial derivatives of any nonzero orders such that $\|\phi-\bar\phi\|_\infty<\varepsilon.$ From \eqref{add:eq0} and the Lipschitz property of $ \phi$, our proof is completed.
\end{proof}

\section{Bounding the derivatives}\label{sec:sec4}

We establish uniform moment controls on the partial derivatives of the generalized AMP orbit in Definition \ref{def1}. For $\sigma>0$, recall the random vector $u^0$ and the random matrix $Z$ from \eqref{add:eq--1}. Let $\mathcal{G}_n(\sigma)$ be the collection of $n\times n$ symmetric random matrices $A=(a_{ii'})_{i,i'\in[n]}$, whose entries are centered independent and each of them are  $\sigma'$-subgaussian for some $0\leq \sigma'\leq \sigma$. We also assume that $\mathcal{G}_n(\sigma)$ is independent of $u^0$ and $Z.$ 

For any $m\in \mathbb{N},$ denote by $[m]=\{1,\ldots,m\}$. For any $n\geq 2,$ let $\mathcal{T}_n$ be the collection of all sequences $(i_r,i_r')_{r\geq 1}\subset [n]^2$ with $i_r< i_r'$ for all $r\geq 1.$ Let $P$ be an arbitrary finite subset of $\mathbb{N}$  and let $m=|P|.$ For $h\in C^m(M_n(\mathbb{R}))$ and $(i_r,i_r')_{r\geq 1}\in \mathcal{T}_n$, denote by 
$$\partial_{P}h(X)\in \mathbb{R}$$
the partial derivative of $h$ with respect to the variables $x_{i_ri_r'}$ for all $r\in P$ counting multiplicities. For a vector-valued function $H=(h_1,\ldots,h_n)$ for $h_1,\ldots,h_n\in C^m(M_n(\mathbb{R}))$, we also set the partial derivative of $H$ by $$
\partial_PH(X)=(\partial_Ph_1(X),\ldots,\partial_Ph_n(X))\in \mathbb{R}^n
$$
and denote
$$
\partial_PH(X)_i=\partial_Ph_i(X),\,\,1\leq i\leq n.
$$
For any $n\geq 2,$ $k\geq 0$, $p\geq 1$, and $m\geq 0,$  denote by $\mathcal{B}_n(k,p,m)$ the collection of all $$
\bigl(P,(i_r,i_r')_{r\geq 1},A,i\bigr)
$$
for $P\subset \mathbb{N}$ with $|P|=m$, $(i_r,i_r')\in \mathcal{T}_n,$ $A\in \mathcal{G}_n(\sigma),$ and $i\in [n].$
The following is our main estimate.

\begin{proposition}\label{prop1}
	 Consider the AMP orbit $(u^{[k]}(X))_{k\geq 0}$ in Definition \ref{def1} with $u^{[0]}(X)=u^0$. Assume that the functions $F_k$ in Definition \ref{def1} satisfy the following assumption:
	  \begin{align}\label{ass}
	 \mbox{$F_k\in C^\infty(\mathbb{R}^{k+1})$ and its partial deriatives of all nonzero orders are uniformly bounded.}
	 \end{align}
	 Let $k\geq 0$, $p\geq 1,$ and $m\geq 0$. Let $U\in C^{\infty}(\mathbb{R}^{2(k+1)})$. Assume that its partial derivatives of nonzero orders are uniformly bounded.
	  Define a vector-valued random function on $M_n(\mathbb{R})$ by
	$$U(X)=U(\hat X_nu^{[k]}(X),\ldots,\hat X_nu^{[0]}(X),u^{[k]}(X),\ldots, u^{[0]}(X))\in \mathbb{R}^n.$$
    There exists a universal constant $\Gamma_{k,p,m}^{U}$ such that 
	\begin{align*}
	\sup_{\mathcal{B}_n(k,p,m)}\bigl(\e\bigl|\partial_{P}U(A)_i\bigr|^p\bigr)^{1/p}&\leq \frac{\Gamma_{k,p,m}^{U}}{n^{m/2}},\,\,\forall n\geq 2.
	\end{align*}
\end{proposition}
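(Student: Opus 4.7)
The plan is to proceed by induction on the AMP level $k$, the inductive statement being that the claimed bound holds for all $m, p, P$, all $A \in \mathcal{G}_n(\sigma)$, all $i \in [n]$, and all admissible $U$. The base case $k = 0$ is tractable because $u^{[0]}(X) = u^0$ does not depend on $X$, so $\partial_P u^{[0]}(X) = 0$ whenever $|P| \geq 1$. All nontrivial derivatives of $U(X)_i = U((\hat{X}_n u^0)_i, u^0_i)$ thus pass through the $\hat{X}_n u^0$ argument; since $\hat{X}_n$ is linear in $X$, each derivative in $P$ either falls on $\hat{X}_n$, producing an explicit $1/\sqrt{n}$, or on $u^0$, producing $0$. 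Combining this with the moment assumption \eqref{add:eq--1} on $u^0$ and the subgaussianity of a single entry of $A$ disposes of $k=0$.

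For the inductive step, write $u^{[k+1]}(X)_j = F_k((\hat{X}_n u^{[k]}(X))_j, u^{[k-1]}(X)_j, \ldots, u^{[0]}(X)_j)$ and differentiate via chain rule and Leibniz. Each resulting term is a product of partial derivatives of $F_k$ (uniformly bounded by assumption \eqref{ass}) times factors of two types: $\partial_{P_r} u^{[\ell]}(X)_i$, directly covered by the inductive hypothesis at level $k$, and $\partial_{P_r}(\hat{X}_n u^{[\ell]}(X))_i$, where $(P_r)$ is a partition of $P$. Applying Leibniz to the matrix-vector product of the second type splits it into contributions in which a single derivative from $P_r$ acts on $\hat{X}_n$---gaining an explicit $1/\sqrt{n}$ and reducing the remaining factor to a derivative of order $|P_r|-1$ covered by the hypothesis---plus a residual $\sum_j (\hat{X}_n)_{ij}\, \partial_{P_r} u^{[\ell]}(X)_j$. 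A H\"older inequality with exponents tailored to $p$ and the $|P_r|$'s then combines the clean factors into the target $n^{-m/2}$ bound, provided this residual admits a $p$-th moment bound of order $n^{-|P_r|/2}$.

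The main obstacle is this residual linear form, whose coefficients depend on the $i$-th row of $X$, so a direct subgaussian concentration argument fails. Here I would implement the Taylor expansion device advertised in the introduction: let $X^{(i)}$ denote $X$ with its $i$-th row and column zeroed (still symmetric, still in $\mathcal{G}_n(\sigma)$), and expand $\partial_{P_r} u^{[\ell]}(X)_j$ as a polynomial in the row variables $(x_{ij'})_{j' \neq i}$ around $X^{(i)}$ up to order $p$. The polynomial coefficients $\partial^\alpha \partial_{P_r} u^{[\ell]}(X^{(i)})_j$ are independent of the $i$-th row, so after multiplying by $(\hat{X}_n)_{ij}$ and taking expectation one conditions on $X^{(i)}$ (and on $u^0, Z$), integrates out the $i$-th row against its explicit subgaussian moments, and applies the inductive hypothesis to the remaining derivative evaluated at $X^{(i)}$. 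The Taylor remainder is absorbed because each uncaptured Taylor derivative costs a factor $x_{ij'}/\sqrt{n} \asymp n^{-1/2}$, so an expansion to order $p$ yields a remainder of size $n^{-p/2}$ that is negligible in $L^p$.

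The hardest part is the combinatorial accounting: one must verify that when the Taylor coefficients, the explicit $\hat{X}_n$-derivatives, and the row-integration moments are all multiplied together, \emph{every} surviving term earns precisely $m$ factors of $n^{-1/2}$, with constants independent of $n$. In particular, one must check that the derivatives $\partial^\alpha \partial_{P_r} u^{[\ell]}(X^{(i)})_j$ genuinely fall within the scope of the inductive hypothesis (they are partial derivatives of the AMP orbit of total order $|\alpha| + |P_r|$ at a matrix still in $\mathcal{G}_n(\sigma)$), and that the $i$-th row moments contribute exactly enough $n^{-1/2}$ powers to offset the Taylor growth. This delicate bookkeeping, together with a parallel argument for the companion bound on $\partial_P(\hat{X}_n u^{[k]}(X))_i$, is what I would expect to occupy the bulk of the proof.
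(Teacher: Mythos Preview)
Your overall plan matches the paper's: reduce Proposition~\ref{prop1} to moment bounds on $\partial_P u^{[k]}(A)_i$ and $\partial_P(\hat A_n u^{[k]}(A))_i$ (the paper packages these separately as Proposition~\ref{prop2}), prove those by induction on $k$ using the chain-rule expansion of Lemma~\ref{lem1} and the product rule for $\hat X_n u^{[\ell]}$ in Lemma~\ref{lem2}, and isolate the residual $(A_n\partial_P u^{[k]})_i$ for a Taylor-expansion argument (Lemma~\ref{lem-1}). The inductive organization and the identification of the residual as the crux are exactly right.

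The gap is in your Taylor step. You zero the \emph{entire} $i$-th row and column and expand $V_j:=\partial_{P_r}u^{[\ell]}(X)_j$ in all $n-1$ row variables to order $p$. Each individual remainder term $\partial^\alpha V_j\bigl(X^{(i)}+t(X-X^{(i)})\bigr)\cdot a^\alpha$ with $|\alpha|=p$ is indeed $O\bigl(n^{-(p+|P_r|)/2}\bigr)$ in $L^q$ by the inductive hypothesis, but there are $\binom{n+p-2}{p}\sim n^p$ multi-indices $\alpha$, so a triangle-inequality bound on the remainder gives only $O\bigl(n^{p/2-|P_r|/2}\bigr)$, not the $n^{-p/2}$ you claim. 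You cannot recover this loss via the mean-zero independence argument, because the integral-form remainder is evaluated at the intermediate point $X^{(i)}+t(X-X^{(i)})$, which still depends on the $i$-th row.

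The paper's fix is to reverse the order of operations: first expand the even moment
\[
\e\bigl[(A_n\partial_P u^{[k]}(A))_i^{\,p}\bigr]
= n^{-p/2}\sum_{I=(\iota_1,\dots,\iota_p)\in[n]^p}\e\bigl[a_{i\iota_1}\cdots a_{i\iota_p}\,V_{\iota_1}\cdots V_{\iota_p}\bigr],
\]
and then, for each fixed tuple $I$, zero only the entries $(i,\iota_s)$ for the \emph{distinct} indices appearing in $I$---at most $p$ of them. Each $V_{\iota_l}$ is now Taylor-expanded in at most $p$ variables, so the remainder carries $O(p^p)$ terms and is genuinely $O\bigl(n^{-(p+m)/2}\bigr)$; the independence argument is applied only to the polynomial main part, whose coefficients are evaluated at a matrix independent of the zeroed entries. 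The ``delicate bookkeeping'' you anticipate is precisely the case split in Lemma~\ref{lem-1} over how many of the $\iota_1,\dots,\iota_p$ are singletons.
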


As we shall see, this bound will be used to control the Gaussian interpolation between the first two moments of $\Phi_{k,n}(A)$ and $\Phi_{k,n}(G).$ 
For the rest of this section, we establish this proposition in three subsections. First of all, we derive explicit formulas for the derivatives of the AMP orbit. Next, we show that Proposition \ref{prop1} is valid if $U$ depends only on the marginal variables. The general case is treated in the last subsection.

\subsection{Some auxiliary lemmas}

Let $k  \geq 0$ and $m\geq 1$ be fixed. Let $v_0,\ldots,v_k\in C^{m}(M_n(\mathbb{R}))$ and $F\in C^m (\mathbb{R}^{k  +1}).$ Set $$
V(X)=F(v_k  (X),\ldots,v_{0}(X))\in \mathbb{R},\,\,\forall X\in M_n(\mathbb{R}).
$$
Let $(i_r,i_r')_{r\geq 1}\in \mathcal{T}_n.$ Let $P$ be a finite subset of $\mathbb{N}$ with $|P|=m.$ For any $1\leq r\leq m,$ set $\mathcal{J}_r(k  )=\{0,\ldots,k  \}^r$ and set $\mathcal{P}_{r}(P) $ the collection of all partitions $\mathcal{P}=\{P_1,\ldots,P_r\}$ of $P$ into $r$ nonempty subsets. For $J=(j_1,\ldots,j_r)\in \mathcal{J}_r(k  )$, set
\begin{align*}
\partial_{J}F(y_k  ,\ldots,y_0)=\partial_{y_{j_r}\cdots y_{j_1}}F(y_k  ,\ldots,y_0).
\end{align*}

\begin{lemma}\label{lem1}
	We have that
	\begin{align*}
	\partial_{P}V&=\sum_{1\leq r\leq m}\sum_{J \in \mathcal{J}_r(k  )}\sum_{\mathcal{P}\in \mathcal{P}_{r}(P) }\partial_J F(v_k  ,\ldots,v_0)\partial_{P_{1}}v_{j_{1}}\cdots \partial_{P_{r}}v_{j_ {r}}.
	\end{align*}
\end{lemma}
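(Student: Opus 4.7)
The identity is a Fa\`a di Bruno--type expansion for the mixed partial derivative of a composition, so the natural approach is induction on $m=|P|$. The base case $m=1$ is just the chain rule: for $P=\{p\}$,
\[
\partial_{x_{i_p i_p'}} V = \sum_{j=0}^{k} \partial_{y_j} F(v_k,\ldots,v_0)\,\partial_{x_{i_p i_p'}} v_j,
\]
which matches the claimed sum with $r=1$, $J=(j)\in\mathcal{J}_1(k)$, and the unique partition $\mathcal{P}=\{P\}\in\mathcal{P}_1(P)$.

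For the inductive step, fix $P$ of cardinality $m+1$, pick any $p^{*}\in P$, and set $P'=P\setminus\{p^{*}\}$. The plan is to apply the inductive hypothesis to $\partial_{P'}V$ and then differentiate once more in $x_{i_{p^{*}}i_{p^{*}}'}$ via the product rule. Each summand $\partial_J F(v_k,\ldots,v_0)\,\partial_{P_1}v_{j_1}\cdots\partial_{P_r}v_{j_r}$ then splits into $r+1$ pieces: one where the new derivative lands on $\partial_J F$, producing $\sum_{j_0=0}^{k}\partial_{y_{j_0}}\partial_J F\cdot \partial_{x_{i_{p^{*}}i_{p^{*}}'}}v_{j_0}$ times the unchanged $v$-factors, and, for each $s\in\{1,\ldots,r\}$, one where the factor $\partial_{P_s}v_{j_s}$ is replaced by $\partial_{P_s\cup\{p^{*}\}}v_{j_s}$.

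The key step is to match these terms with the summands in the claimed formula for $\partial_P V$. Partitions of $P$ into $r+1$ blocks arise precisely as partitions of $P'$ into $r$ blocks with the singleton $\{p^{*}\}$ adjoined (corresponding to the first case above, with the extra index $j_0$ appended to $J$), while partitions of $P$ into $r$ blocks arise as partitions of $P'$ into $r$ blocks with $p^{*}$ inserted into one of the existing blocks (corresponding to the second case, with the index tuple $J$ unchanged). This gives a bijection between the terms produced by the product rule and the summands in the statement, so summing recovers the formula. The only delicate part is this combinatorial bookkeeping---verifying that each $(r',J',\mathcal{P}')$ is produced exactly once with the correct coefficient---but it is routine once the split ``where does $p^{*}$ go?'' is made; there is no analytic obstacle since the $C^m$ hypothesis on $F$ and on $v_0,\ldots,v_k$ ensures that all mixed partials commute.
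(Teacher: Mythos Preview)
Your proof is correct and follows essentially the same route as the paper's: induction on $|P|$, with the inductive step obtained by applying one more derivative via the product rule and then splitting according to whether the new derivative hits the $F$-factor or one of the $v$-factors. The paper carries out the same reorganization, distinguishing partitions of $[m+1]$ according to whether $\{m+1\}$ is a singleton block (equation~\eqref{eq2} and the displayed computation after it). One minor wording slip: your sentence ``Partitions of $P$ into $r+1$ blocks arise precisely as partitions of $P'$ into $r$ blocks with the singleton $\{p^{*}\}$ adjoined'' is literally false---only those partitions in which $\{p^{*}\}$ is itself a block arise this way---but the intended bijection (summed over all $r$, combining both cases) is correct and is exactly what the paper establishes.
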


\begin{proof}
	We argue by induction on the size of the set $P$. The case $|P| = 1$ is obvious. Suppose that the conclusion holds for some $m\geq 1$ and all $P\subset \mathbb{N}$ with $|P| = m$. Without loss of generality, it suffices to show that the conclusion holds for $P = [m+1]$. From induction hypothesis, we compute directly to get
	\begin{align*}
	\partial_{[m+1]}V =\partial_{\{m+1\}}\bigl(\partial_{[m]}V\bigr)&=\sum_{1\leq r\leq m}\sum_{J \in \mathcal{J }_r(k)}\sum_{\mathcal{P}\in \mathcal{P}_{r}([m])}\Bigl(\partial_JF \partial_{\{m+1\}}\bigl(\partial_{P_1}v_{j_ 1} \cdots \partial_{P_r}v_{j_ r} \bigr)\\
	&\qquad\qquad\qquad+\Bigl(\sum_{j=0}^{k  }\partial_{j,J}F \partial_{\{m+1\}}v_j \Bigr)\partial_{P_1}v_{j_ 1} \cdots \partial_{P_r}v_{j_ r} \Bigr).
	\end{align*}
	To handle the first summation, note that
	\begin{align*}
	&\partial_{\{m+1\}}\bigl(\partial_{P_1}v_{j_ 1}\cdots \partial_{P_r}v_{j_ r}\bigr)
	=\sum_{s=1}^r(\partial_{P_1}v_{j_ 1})\cdots(\partial_{P_{s-1}}v_{j_ {s-1}})( \partial_{\{m+1\}\cup P_s}v_{j_ s})(\partial_{P_{s+1}}v_{j_ {s+1}})\cdots(\partial_{P_r}v_{j_ r}),
	\end{align*}
	which implies that
	\begin{align}
	\begin{split}\label{eq2}
	&\sum_{1\leq r\leq m}\sum_{J \in \mathcal{J }_r(k)}\sum_{\mathcal{P}\in \mathcal{P}_{r}([m])} \partial_J F\partial_{\{m+1\}}\bigl(\partial_{P_1}v_{j_ 1}\cdots \partial_{P_r}v_{j_ r}\bigr)\\
	&=\sum_{1\leq r\leq m}\sum_{J \in \mathcal{J }_r(k)}\sum_{\mathcal{P}\in \mathcal{P}_r([m+1]):\{m+1\}\notin \mathcal{P}}\partial_JF\bigl(\partial_{P_1}v_{j_ 1}\cdots \partial_{P_r}v_{j_ r}\bigr).
	\end{split}
	\end{align}
	On the other hand, since
	\begin{align*}
	\Bigl(\sum_{j=0}^{k  }\partial_{j,J}F\partial_{\{m+1\}}v_j\Bigr)\partial_{P_1}v_{j_ 1}\cdots \partial_{P_r}v_{j_ r}&=\sum_{j_ {r+1}=0}^{k  }\partial_{j_{r+1},J}F\partial_{P_1}v_{j_ 1}\cdots \partial_{P_r}v_{j_ {r}} \partial_{\{m+1\}}v_{j_ {r+1}},
	\end{align*}
	it follows that
	\begin{align*}
	&\sum_{1\leq r\leq m}\sum_{J \in \mathcal{J }_r(k)}\sum_{\mathcal{P}\in \mathcal{P}_{r}([m])}\Bigl(\sum_{j=0}^{k  }\partial_{j,J}F\partial_{\{m+1\}}v_j\Bigr)\partial_{P_1}v_{j_ 1}\cdots \partial_{P_r}v_{j_ r}\\
	&=\sum_{1\leq r\leq m}\sum_{J \in \mathcal{J }_{r+1}(k)}\sum_{\mathcal{P}\in \mathcal{P}_{r+1}([m+1]):\{m+1\}\in \mathcal{P}}\partial_{J }F\partial_{P_1}v_{j_ 1}\cdots \partial_{P_{r+1}}v_{j_ {r+1}}.
	\end{align*}
	To simplify this summation, we write
	\begin{align*}
	&\sum_{1\leq r\leq m}\sum_{J \in \mathcal{J }_{r+1}(k)}\sum_{\mathcal{P}\in \mathcal{P}_{r+1}([m+1]):\{m+1\}\in \mathcal{P}}\partial_{J }F\partial_{P_1}v_{j_ 1}\cdots \partial_{P_{r+1}}v_{j_ {r+1}}\\
	&=\sum_{2\leq r\leq m+1}\sum_{J \in \mathcal{J }_{r}(k)}\sum_{\mathcal{P}\in \mathcal{P}_r([m+1]):\{m+1\}\in \mathcal{P}}\partial_{J }F\partial_{P_1}v_{j_ 1}\cdots \partial_{P_{r}}v_{j_ {r}}\\
	&=\sum_{1\leq r\leq m}\sum_{J \in \mathcal{J }_{r}(k)}\sum_{\mathcal{P}\in \mathcal{P}_r([m+1]):\{m+1\}\in \mathcal{P}}\partial_{J }F\partial_{P_1}v_{j_ 1}\cdots \partial_{P_{r}}v_{j_ {r}}\\
	&+\sum_{J \in \mathcal{J }_{m+1}(k)}\sum_{\mathcal{P}\in \mathcal{P}_{m+1}([m+1])}\partial_{J }F\partial_{P_1}v_{j_ 1}\cdots \partial_{P_{m+1}}v_{j_ {m+1}},
	\end{align*}
	where in the first equality we changed the variable $r+1\to r$, while in the second equality we divide $2\leq r\leq m+1$ into $2\leq r\leq m$ and $r=m+1$ and use the observation that $\mathcal{P}_1([m+1])$ contains no element $\mathcal{P}$ so that $\{m+1\}\in \mathcal{P}.$ Combining this summation with \eqref{eq2} yields the desired formula.
\end{proof}

\begin{lemma}\label{lem2}
	For any $H=(h_1,\ldots,h_n)$ for $h_1,\ldots,h_n\in C^{m}(M_n(\mathbb{R})),$ we have that
	\begin{align*}
\partial_{P}\bigl(\hx_nH(X)\bigr)&=\frac{1}{\sqrt{n}}\sum_{r\in P}E_r\partial_{P\setminus\{r\}}H(X)+\hx_n\partial_{P}H(X),
\end{align*}
where $E_r\in M_n(\mathbb{R})$, whose entries are equal to $1$ at $(i_r,i_r')$ and $(i_r',i_r)$ and are zero otherwise.
	
\end{lemma}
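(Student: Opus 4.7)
The plan is to apply the iterated Leibniz rule to the matrix-vector product $\hat{X}_n H(X)$, viewing both factors as functions of the symmetric matrix $X$. Writing the partial derivatives with respect to the (possibly repeated) variables $x_{i_r i_r'}$ for $r\in P$, the iterated product rule takes the form
\[
\partial_P \bigl(\hat{X}_n H(X)\bigr) = \sum_{S \subseteq P} \bigl(\partial_S \hat{X}_n\bigr)\bigl(\partial_{P \setminus S} H(X)\bigr),
\]
which I would justify by a short induction on $|P|$ using the ordinary one-variable product rule.

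The crucial observation is that $\hat{X}_n = X/\sqrt{n} + Z/n$ is affine in the entries of $X$ (and $Z$ does not depend on $X$), so derivatives of $\hat{X}_n$ of order $\geq 2$ vanish identically. For a single variable $x_{i_r i_r'}$ with $i_r < i_r'$, differentiating the symmetric matrix $X$ produces ones at the two positions $(i_r, i_r')$ and $(i_r', i_r)$, which is exactly the matrix $E_r$ of the statement; hence $\partial_r \hat{X}_n = E_r/\sqrt{n}$. Plugging this into the Leibniz expansion kills all terms with $|S|\geq 2$, leaves the term $\hat{X}_n \partial_P H(X)$ from $S=\emptyset$, and produces one contribution $(1/\sqrt{n})\,E_r\,\partial_{P\setminus\{r\}}H(X)$ for each singleton $S=\{r\}$, yielding the claimed identity.

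The only thing I would be careful about is the bookkeeping around the symmetric parametrization of $X$: since the off-diagonal variable $x_{i_r i_r'}$ appears at two entries of $X$, the ``derivative'' of $X$ with respect to $x_{i_r i_r'}$ is $E_r$ rather than a single elementary matrix, and this is precisely the object that appears in the statement. Beyond this convention-setting, there is no real obstacle — the argument reduces to the fact that $\hat{X}_n$ is linear in $X$ and that $Z$ is $X$-independent, so the proof is essentially a one-line application of the Leibniz rule once the notation is aligned.
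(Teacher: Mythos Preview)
Your proposal is correct and amounts to the same argument as the paper's: the paper proceeds by direct induction on $|P|$, applying the one-variable product rule at each step, which unwinds to exactly the Leibniz expansion you write down followed by the observation that $\partial_S \hat{X}_n = 0$ for $|S|\geq 2$. The only organizational difference is that you invoke the general Leibniz formula up front and then specialize, whereas the paper folds the linearity of $\hat{X}_n$ into the induction step itself.
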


\begin{proof}
  It suffices to assume that $P=[m]$. If $m=1,$ then $\partial_{\{1\}}\bigl(\hx_nH(X)\bigr)=n^{-1/2}E_1H(X)+\hx_n\partial_{\{1\}}H(X).$ Assume that the assertion is valid for $m\geq 1.$ Then
	\begin{align*}
\partial_{[m+1]}\bigl(\hx_nH(X)\bigr)
&=\partial_{\{m+1\}}\Bigl(\frac{1}{\sqrt{n}}\sum_{r\in [m]}E_r\partial_{[m]\setminus\{r\}}H(X)+\hx_n\partial_{[m]}H(X)\Bigr)\\
&=\frac{1}{\sqrt{n}}\sum_{r\in [m]}E_r\partial_{[m+1]\setminus\{r\}}H(X)+\partial_{\{m+1\}}\bigl(\hx_n\partial_{[m]}H(X)\bigr)\\
&=\frac{1}{\sqrt{n}}\sum_{r\in [m+1]}E_r\partial_{[m+1]\setminus\{r\}}H(X)+\hx_n\partial_{[m+1]}H(X).
\end{align*}
\end{proof}

\subsection{Moment control}

The most crucial ingredient of this paper lies on the following proposition, which establishes two special cases of Proposition \ref{prop1}.

\begin{proposition}\label{prop2}
		Consider the AMP orbits in Definition \ref{def1} with the initialization $u^{[0]}(X) = u^{0}$ and assume that $(F_k)_{k\geq 0}$ satisfies \eqref{ass}. For any $k\geq 0$, $p\geq 1,$ and $m\geq 0,$ there exist constants $\Gamma_{k,p,m}$ and $\Gamma_{k,p,m}'$ such that for any $n\geq 2$,
	\begin{align}\label{prop2:eq1}
	\sup_{\mathcal{B}_n(k,p,m)}\bigl(\e \bigl|\partial_{P}u^{[k]}(A)_i\bigr|^p\bigr)^{1/p}&\leq \frac{\Gamma_{k,p,m}}{n^{m/2}}
	\end{align}
	and
	\begin{align}\label{prop2:eq2}
	\sup_{\mathcal{B}_n(k,p,m)}\bigl(\e \bigl|\partial_{P}\bigl(\ha_nu^{[k]}(A)\bigr)_i\bigr|^p\bigr)^{1/p}&\leq \frac{\Gamma_{k,p,m}'}{n^{m/2}}.
	\end{align}
\end{proposition}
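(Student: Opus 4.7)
I would establish both \eqref{prop2:eq1} and \eqref{prop2:eq2} by induction on $k$, proving each level for all $m$ and $p$ before advancing. The base case $k = 0$ is immediate: since $u^{[0]}(A) = u^0$ is independent of $A$, $\partial_P u^{[0]}(A)_i = 0$ whenever $m \geq 1$, while for $m = 0$ the moment bound on $|u_i^0|$ follows from \eqref{add:eq--1}. The bound on $\partial_P(\hat A_n u^{[0]}(A))_i$ then combines Lemma \ref{lem2} with Lemma \ref{add:lem5} and the subgaussian moment estimates on row $i$ of $A$.

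Assume both bounds hold at level $k$. I would first establish \eqref{prop2:eq1} at level $k+1$ for every $m$ and $p$. Applying Lemma \ref{lem1} to the composition defining $u^{[k+1]}$, the coordinate $\partial_P u^{[k+1]}(A)_i$ becomes a finite sum indexed by $r$, a multi-index $J \in \{0,\ldots,k\}^r$, and a partition $\{P_s\}_{s=1}^r$ of $P$, of products $\partial_J F_k(\cdots) \prod_{s=1}^r \partial_{P_s} v_{j_s}(A)_i$, where each $v_{j_s}$ is one of $\hat A_n u^{[k]}, u^{[k-1]}, \ldots, u^{[0]}$. Since \eqref{ass} renders $\partial_J F_k$ uniformly bounded, Hölder's inequality in the product at exponent $pr$, combined with the inductive hypothesis applied to each factor (either \eqref{prop2:eq1} at level $\leq k$ or \eqref{prop2:eq2} at level $k$), yields a total contribution of order $\prod_s n^{-|P_s|/2} = n^{-m/2}$ since $\sum_s |P_s| = m$.

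The heart of the argument is \eqref{prop2:eq2} at level $k+1$. By Lemma \ref{lem2},
$$\partial_P \bigl(\hat A_n u^{[k+1]}(A)\bigr)_i = \frac{1}{\sqrt{n}} \sum_{r \in P} \bigl(E_r \partial_{P \setminus \{r\}} u^{[k+1]}(A)\bigr)_i + \bigl(\hat A_n W(A)\bigr)_i,$$
where $W(A) := \partial_P u^{[k+1]}(A)$. Each $E_r$ picks out a single coordinate, so the first sum is bounded by \eqref{prop2:eq1} at level $k+1$ (just proven) at order $m-1$ times $n^{-1/2}$, giving $n^{-m/2}$. The difficulty is the second term, in which $W(A)$ and the entries of row $i$ of $\hat A_n$ are coupled through the iteration. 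I would decouple them by Taylor expanding each $W_{i'}(A)$ to order $p$ in the row-$i$ variables $(a_{ij})_j$ about zero: writing $A^{(i)}$ for $A$ with row and column $i$ set to zero,
$$W_{i'}(A) = \sum_{\ell = 0}^{p-1} \frac{1}{\ell!} \sum_{|\alpha| = \ell} (a_{(i)})^\alpha (\partial^\alpha W_{i'})(A^{(i)}) + \mathrm{Rem}_{i',p}(A).$$
Substituting into $(\hat A_n W(A))_i$, the leading $\ell = 0$ contribution $\sum_{i'} (\hat A_n)_{ii'} W_{i'}(A^{(i)})$ is decoupled from row $i$; conditioning on $A^{(i)}$, $u^0$, and $Z$, a Khintchine-type inequality controls the $a_{ii'}/\sqrt{n}$ portion while Cauchy--Schwarz together with Lemma \ref{add:lem5} controls the $Z_{ii'}/n$ portion, both being reduced to $n^{-m/2}$ via \eqref{prop2:eq1} applied to $W(A^{(i)})$. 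Each intermediate order $\ell \geq 1$ contains a derivative $\partial^\alpha W_{i'} = \partial^{P \cup \alpha} u^{[k+1]}_{i'}$ of size $n^{-(m+\ell)/2}$ by \eqref{prop2:eq1} at level $k+1$ (which must be available at all orders up to $m + p - 1$), and this is precisely what is designed to compensate both the sum over multi-indices $\alpha$ with $|\alpha| = \ell$ and the subgaussian moments of the row-$i$ monomials, with an additional $n^{-1/2}$ gain from the centering of the $a_{ii'}/\sqrt{n}$ weight. The order-$p$ remainder is controlled by a crude Lipschitz estimate combined with the a priori bounds in Lemmas \ref{add:lem1} and \ref{add:lem5}, and choosing $p$ larger than the desired exponent in the expansion renders it negligible.

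The main obstacle will be the combinatorial bookkeeping in the Taylor step: the naive count of monomials at order $\ell$ is $n^{\ell + 1}$, so verifying that the centering of $a_{ii'}$, the $n^{-1/2}$ normalization, and the $n^{-(m+\ell)/2}$ derivative bounds combine—via a careful second-moment/hypercontractivity style expansion of the polynomial in the row-$i$ entries—to leave every intermediate term at size $n^{-m/2}$ uniformly in $n$ and $A \in \mathcal{G}_n(\sigma)$ is the delicate point. The subgaussian hypothesis on $A$ enters precisely here, controlling the $L^p$-norms of the polynomial row-$i$ factors arising at each order of the Taylor expansion, and ensuring that $\Gamma'_{k,p,m}$ can be chosen independent of $A$.
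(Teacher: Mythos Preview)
Your inductive skeleton, the base case, and the derivation of \eqref{prop2:eq1} at level $k+1$ via Lemma~\ref{lem1} and H\"older all match the paper. The difficulty, as you correctly isolate, is the term $\bigl(A_n\partial_Pu^{[k+1]}(A)\bigr)_i$ inside \eqref{prop2:eq2}; the paper also handles this by a Taylor decoupling argument (Lemma~\ref{lem-1}). Where you diverge from the paper is in \emph{which} variables you Taylor expand in, and this is where the proposal develops a genuine gap.

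You propose to expand each $W_{i'}(A)=\partial_Pu_{i'}^{[k+1]}(A)$ to order $p$ in the full row $(a_{ij})_{j\in[n]}$, i.e.\ in $n$ variables. At order $\ell$ this produces $O(n^{\ell})$ monomials; you acknowledge this explosion and point to a second-moment/hypercontractivity argument for the intermediate orders $0\le\ell\le p-1$. That can indeed be made to work: conditioning on $A^{(i)}$, the order-$\ell$ contribution is a homogeneous degree-$(\ell+1)$ polynomial in the independent subgaussian row-$i$ entries with coefficients $\partial^{\alpha}W_{i'}(A^{(i)})$ that are independent of the row, and a careful $L^2$ computation together with the ``each index appears at least twice'' constraint plus hypercontractivity does yield $n^{-m/2}$.

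The step that fails is the remainder. In the integral form,
\[
R_{p,i'}=\sum_{|\alpha|=p}\frac{p}{\alpha!}\,a_{(i)}^{\alpha}\int_0^1(1-t)^{p-1}\,\partial^{\alpha}W_{i'}\bigl(A^{(i)}+t(A-A^{(i)})\bigr)\,dt,
\]
and the derivative factors are evaluated at a point that \emph{depends on the row-$i$ entries through $t$}. Independence between the monomial $a_{(i)}^{\alpha}\,a_{ii'}$ and the coefficient is lost, so the centering/second-moment mechanism no longer applies. A direct H\"older bound gives at best
\[
n\cdot n^{-1/2}\cdot \binom{n+p-1}{p}\cdot n^{-(m+p)/2}\;\asymp\; n^{(p+1-m)/2},
\]
which blows up. Your suggestion that ``choosing $p$ larger than the desired exponent renders it negligible'' is backwards: in an $n$-variable expansion the number of remainder terms is $\sim n^{p}$, so increasing $p$ worsens the estimate. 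Nor do Lemmas~\ref{add:lem1} and~\ref{add:lem5} furnish any Lipschitz control on $W=\partial_Pu^{[k+1]}$; they concern $\|u^{[k]}\|_2$ and operator norms, not derivatives of the orbit.

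The paper's fix is exactly the insight missing here. Rather than expanding $W_{i'}$ in all $n$ row variables and then taking moments, Lemma~\ref{lem-1} first expands the $p$-th moment,
\[
\e\bigl(A_nV(A)\bigr)_i^{p}=\frac{1}{n^{p/2}}\sum_{I\in[n]^p}\e\Bigl[\prod_{l\in[p]}a_{i\iota_l}\Bigr]\,V_{\iota_1}(A)\cdots V_{\iota_p}(A),
\]
and, for each fixed tuple $I$ with distinct-value set $\{\iota_s:s\in D\}$, Taylor expands every factor $V_{\iota_l}$ \emph{only} in the at most $|D|\le p$ variables $\{x_{i\iota_s}:s\in D\}$. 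The number of Taylor terms at each order is now $|D|^{a}\le p^{a}$, a constant independent of $n$, so the remainder is harmless by H\"older. The zeroed matrix $A^{I}$ kills precisely the variables $a_{i\iota_1},\ldots,a_{i\iota_p}$, making the main Taylor coefficients genuinely independent of them; the mean-zero property then forces every surviving index to appear at least twice, and the combinatorics closes. In short: expand the moment first, then Taylor expand per term in a \emph{bounded} set of variables, not in the whole row.
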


\begin{proof} We argue by induction on $k\geq 0.$ First assume that $k=0.$ We aim to show that \eqref{prop2:eq1} and \eqref{prop2:eq2} are valid for all $p\geq 1$ and $m\geq 0.$  For any $P$ with $|P|=m$, since $\partial_Pu^{[0]}(X)=u^0$ if $m=0$ and $\partial_Pu^{[0]}(X)=0$ if $m\geq 1$, \eqref{add:eq--1} obviously implies \eqref{prop2:eq1}. To show \eqref{prop2:eq2}, note that for any $P$ with $|P|=m,$
	\begin{align*}
	\partial_P(\hx_nu^{[0]}(X))=\left\{
	\begin{array}{ll}
	\frac{1}{\sqrt{n}}\hx_nu^0,&\mbox{if $m=0$},\\
	\frac{1}{\sqrt{n}} E_ru^0,&\mbox{if $m=1$ and $P=\{r\}$ for some $r\geq 1$},\\
	0&\mbox{if $m\geq 2$},
	\end{array}\right.
	\end{align*}
	where $E_{r}\in M_n(\mathbb{R})$ is equal to $1$ on the entries $(i_r,i_r')$ and $(i_r',i_r)$ and is zero elsewhere. To control the first case, note that $u^0$ is independent of $A$ and the entries in $A$ are independent. From the subgaussianity of $a_{ij}$ and \eqref{add:eq--1}, there exist positive constants $\lambda(\sigma)$ and $D(\sigma)$ such that for any $n\geq 1$ and $\lambda\in[-\lambda(\sigma),\lambda(\sigma)]$,
	\begin{align*}
	\e e^{\lambda n^{-1/2}\sum_{j=1}^na_{ij}u_j^0}&\leq \e e^{\lambda^2 \sigma^2\|u^0\|^2/2n}\leq \e e^{\lambda(\sigma)^2 \sigma^2\|u^0\|^2/2n}\leq D(\sigma).
	\end{align*}
	Consequently, from $x^{2p}\leq (2p)!\cosh x$ and the Jensen inequality,
	\begin{align*}
	\e\Bigl|\frac{\sum_{j=1}^na_{ij}u_j^0}{\sqrt{n}}\Bigr|^p&\leq \frac{\sqrt{(2p)!D(\sigma)}}{\lambda(\sigma)^p}.
	\end{align*} 
	On the other hand, the Cauchy-Schwarz and Jensen inequalities imply that
	\begin{align*}
	\e \Bigl| \sum_{j=1}^n  \frac{z_{ij}}{n} u_j^0 \Bigr|^p &\leq \e \Bigl| \frac{\sum_{j=1}^n z_{ij}^2}{n}\Bigr|^{p/2} \Bigl(\frac{\|u^0\|_2}{\sqrt{n}}\Bigr)^p\\
	& \leq \Bigl(\e \Bigl| \frac{\sum_{j=1}^n z_{ij}^2}{n}\Bigr|^{p}\Bigr)^{1/2}\Bigl(\e \Bigl(\frac{\|u_0\|_2^2}{n}\Bigr)^{p}\Bigr)^{1/2}\\
	&\leq \Bigl(\e \Bigl| \frac{\sum_{j=1}^n z_{ij}^{2p}}{n}\Bigr|\Bigr)^{1/2}\Bigl(\e \Bigl(\frac{\|u_0\|_2^2}{n}\Bigr)^{p}\Bigr)^{1/2}\leq D'(\sigma),
	\end{align*}
	where $D'(\sigma)$ is a constant independent of $n$ and is guaranteed by the moment assumption \eqref{add:eq--1}. Combining these two inequalities validates \eqref{prop2:eq2}  for $(k,p,m)=(0,p,0)$. In the second case, since $E_ru^0$ has only two nonzero entries and they are  $u_{i_r}^0$ and $u_{i_r'}^0$, the bound \eqref{add:eq--1} implies \eqref{prop2:eq2} for $(k,p,m)=(0,p,1)$. The third case is evident. In conclusion, \eqref{prop2:eq2} holds for $k=0$, $p\geq 1$, and $m\geq 0.$

Next, we assume that there exists some $k_0\geq 0$ such that \eqref{prop2:eq1} and \eqref{prop2:eq2} are valid for all $0\leq k\leq k_0$, $p\geq 1,$ and $m\geq 0.$ Our goal is to show that they are also valid for $k=k_0+1,$ $p\geq 1,$ and $m\geq 0.$ Denote by 
$$
v^{[k_0]}(X)=\hx_nu^{[k_0]}(X),v^{[k_0-1]}(X)=u^{[k_0-1]}(X),\ldots, v^{[0]}(X)=u^{[0]}(X).
$$
First we verify \eqref{prop2:eq1}. Assume that $m=0$. From the Lipschitz property of $F_{k_0}$, 
\begin{align*}
\bigl(\e|u^{[k_0+1]}(A)_i|^p \bigr)^{1/p}&= \bigl(\e|F_{k_0}(v^{[k_0]}(A),\ldots, v^{[0]}(A))_i|^p\bigr)^{1/p}\\
&\leq   \eta_{k_0}\sum_{\ell=0}^{k_0}\bigl(\e|v^{[\ell]}(A)_i|^p\bigr)^{1/p}+ |F_{k_0}(0)|,
\end{align*}
where  $\eta_{k_0}$ is the Lipschitz constant of $F_{k_0}.$
By induction hypothesis, \eqref{prop2:eq1} follows when $(k,p,m)=(k_0+1,p,0)$ for all $p\geq 1.$ Now suppose that $m\geq 1$. For $n\geq 2,$ consider an arbitrary $P$ with $|P|=m$, $(i_r,i_r')_{r\geq 1}\in \mathcal{T}_n$, $A\in \mathcal{G}_n(\sigma),$ and $i\in [n].$ From Lemma \ref{lem1}, 
\begin{align*}
\partial_{P}u^{[k_0+1]}(X)_i&=\sum_{r\in P}\sum_{J\in \mathcal{J}_r(k_0),\mathcal{P}\in \mathcal{P}_r(P)}\partial_JF_{k_0}(v_i^{[k_0]}(X),\ldots,v_i^{[0]}(X))\\
&\qquad\qquad\qquad\qquad\qquad\partial_{P_1}v_i^{[j_1]}(X)\cdots \partial_{P_r}v_i^{[j_r]}(X).
\end{align*}
From this, Minkowski's inequality, H\"older's inequality, and the boundedness of the partial derivatives of $F_{k_0}$, there exist constants $\eta_{k_0,J}$'s such that
\begin{align*}
\bigl(\e|\partial_{P}u^{[k_0+1]}(A)_i|^p\bigr)^{1/p}&\leq \sum_{1\leq r\leq m}\sum_{J\in \mathcal{J}_r(k_0),\mathcal{P}\in \mathcal{P}_r(P)}\eta_{k_0,J}\prod_{l=1}^r\bigl(\e|\partial_{P_l}v^{[j_l]}(A)_i|^{rp}\bigr)^{1/rp}.
\end{align*}
Since each $v^{[j]}(A)$ is either $\ha_nu^{[j]}(A)$ or $u^{[j]}(A)$, this implies that
\begin{align*}
\bigl(\e|\partial_{P}u^{[k_0+1]}(A)_i|^p\bigr)^{1/p}&\leq \sum_{r\in P}\sum_{J\in \mathcal{J}_r(k_0),\mathcal{P}\in \mathcal{P}_r(P)}\prod_{l=1}^r\frac{\max(\Gamma_{k_0,rp,|P_l|},\Gamma_{k_0,rp,|P_l|}')}{n^{|P_l|/2}}\\
&=\frac{1}{n^{m/2}}\sum_{r\in P}\sum_{J\in \mathcal{J}_r(k_0),\mathcal{P}\in \mathcal{P}_r(P)}\prod_{l=1}^r\max(\Gamma_{k_0,rp,|P_l|},\Gamma_{k_0,rp,|P_l|}')\\
&=:\frac{1}{n^{m/2}}\Gamma_{k_0+1,p,m}.
\end{align*}
Hence, \eqref{prop2:eq1} is valid for $k=k_0+1$, $p\geq 1$, and $m\geq 1.$ Putting these two cases together yields the validity of \eqref{prop2:eq1} for $k=k_0+1$, $p\geq 1$, and $m\geq 0.$

Now we verify \eqref{prop2:eq2}. Let $P\subset \mathbb{N}$ with $|P|=m$, $(i_r,i_r')_{r\geq 1}\in \mathcal{T}_n,$ $A\in \mathcal{G}_n(\sigma),$ and $i\in [n]$. Note that from Lemma \ref{lem2},
\begin{align}\label{add:eq--2}
\partial_{P}\bigl(\hx_nu^{[k_0+1]}(X)\bigr)&=\frac{1}{\sqrt{n}}\sum_{r\in P}E_r\partial_{P\setminus\{r\}}u^{[k_0+1]}(X)+\hx_n\partial_Pu^{[k_0+1]}(X).
\end{align}
The first term can be controlled by 
\begin{align}\begin{split}\notag
&\Bigl(\e \Bigl|\frac{1}{\sqrt{n}}\sum_{r\in P}E_r\partial_{P\setminus\{r\}}u^{[k_0+1]}(A)_i\Bigr|^p\Bigr)^{1/p}\\
&\leq \frac{1}{\sqrt{n}}\sum_{r\in P}\bigl(\e\bigl|\delta_{i,i_r}\partial_{P\setminus\{r\}}u^{[k_0+1]}(A)_{i_r'}+\delta_{i,i_{r}'}\partial_{P\setminus\{r\}}u^{[k_0+1]}(A)_{i_r}\bigr|^p \bigr)^{1/p}\\
&\leq \frac{1}{\sqrt{n}}\sum_{r\in P}\Bigl(\bigl(\e|\partial_{P\setminus\{r\}}u^{[k_0+1]}(A)_{i_r'}|^p\bigr)^{1/p}+\bigl(\e|\partial_{P\setminus\{r\}}u^{[k_0+1]}(A)_{i_r}|^p\bigr)^{1/p}\Bigr)
\end{split}\\
\begin{split}\label{add:eq--3}
&\leq \frac{2m\Gamma_{k_0+1,p,m-1}}{n^{m/2}},
\end{split}
\end{align}
where $\delta_{i,i'}=1$ if $i=i'$ and it is zero if $i\neq i'.$
As for the second term, note that for any $u\in \mathbb{R}^n,$
\begin{align*}
|(\hx_nu)_i|&\leq |(X_nu)_i|+\frac{1}{n}\Bigl(\sum_{j=1}^n z_{ij}^2\Bigr)^{1/2}\|u\|_2.
\end{align*}
This implies that
\begin{align*}
&\bigl(\e\bigl|\bigl(\ha_n\partial_Pu^{[k_0+1]}(A)\bigr)_i\bigr|^p\bigr)^{1/p}\\
&\leq \bigl(\e\bigl|\bigl(A_n\partial_Pu^{[k_0+1]}(A)\bigr)_i\bigr|^p\bigr)^{1/p}+\frac{1}{n}\Bigl(\e\Bigl[\Bigl(\sum_{j=1}^n z_{ij}^2\Bigr)^{p/2}\bigl\|\partial_Pu^{[k_0+1]}(A)\bigr\|_2^p\Bigr]\Bigr)^{1/p}\\
&\leq \bigl(\e\bigl|\bigl(A_n\partial_Pu^{[k_0+1]}(A)\bigr)_i\bigr|^p\bigr)^{1/p}
+\Bigl(\e\frac{\bigl(\sum_{j=1}^n z_{ij}^2\bigr)^{p}}{n^p}\Bigr)^{1/2p}\Bigl(\e \frac{\|\partial_Pu^{[k_0+1]}(A)\bigr\|_2^{2p}}{n^{p}}\Bigr)^{1/2p}.
\end{align*}
Here, from \eqref{add:eq--1},
$
n^{-p}\e\bigl(\sum_{j=1}^n z_{ij}^2\bigr)^{p}
$
is bounded above by a constant independent of $n$. From the validity of \eqref{prop2:eq1} for $k=k_0+1,$ $p\geq 1$, and $m\geq 0$ that we established above, we also have that
\begin{align*}
\Bigl(\e \frac{\|\partial_Pu^{[k_0+1]}(A)\bigr\|_2^{2p}}{n^{p}}\Bigr)^{1/2p}&\leq \Bigl(\frac{\e \sum_{i=1}^n|\partial_Pu^{[k_0+1]}(A)_i|^{2p}}{n}\Bigr)^{1/2p}\leq \frac{\Gamma_{k_0+1,2p,m}}{n^{m/2}}
\end{align*}
and from Lemma~\ref{lem-1} below,
\begin{align*}
\bigl(\e\bigl|\bigl(A_n\partial_Pu^{[k_0+1]}(A)\bigr)_i\bigr|^p\bigr)^{1/p}&\leq \frac{\Upsilon_{k_0+1,p,m}}{n^{m/2}},
\end{align*}
where $\Upsilon_{k_0+1,p,m}$ is a universal constant independent of $n.$ Therefore, we arrive at
\begin{align*}
\bigl(\e\bigl|\bigl(\ha_n\partial_Pu^{[k_0+1]}(A)\bigr)_i\bigr|^p\bigr)^{1/p}&\leq \frac{C}{n^{m/2}},
\end{align*}
where $C$ is a constant independent of $n.$ Plugging this inequality and \eqref{add:eq--3} into \eqref{add:eq--2} yields \eqref{prop2:eq2} for $k=k_0+1,$ $p\geq 1,$ and $m\geq 0.$ This completes our proof.
\end{proof}

At the end of this subsection, we establish the following lemma used in the above proof.

\begin{lemma}\label{lem-1}
	Let $k\geq 1$. Assume that for any $p\geq 1$ and $m\geq 0$, there exists a constant $\Gamma_{k,p,m}$ such that 
	\begin{align}\label{lem1:eq1}
	\sup_{\mathcal{B}_n(k,p,m)}\bigl(\e \bigl|\partial_{P}u^{[k]}(A)_i\bigr|^p\bigr)^{1/p}&\leq \frac{\Gamma_{k,p,m}}{n^{m/2}},\,\,\forall n\geq 2.
	\end{align}
	Then for any $p\geq 1$ and $m\geq 0$, there exists a constant $\Upsilon_{k,p,m}$ such that
	\begin{align*}
	\sup_{\mathcal{B}_n(k,p,m)}\bigl(\e \bigl|\bigl(A_n\partial_{P}u^{[k]}(A)\bigr)_i\bigr|^p\bigr)^{1/p}&\leq \frac{\Upsilon_{k,p,m}}{n^{m/2}},\,\,\forall n\geq 2.
	\end{align*}
	
\end{lemma}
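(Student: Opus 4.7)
The plan is to Taylor-expand $\partial_P u^{[k]}(A)_j$ in the variables of the $i$-th row/column of $A$ so as to isolate a polynomial-chaos structure in the inner product $(A_n \partial_P u^{[k]}(A))_i = n^{-1/2}\sum_j a_{ij}\partial_P u^{[k]}(A)_j$. First fix $i\in[n]$; by the symmetry of $A$, the $i$-th row and column are parametrized by the $n$ variables $y = (a_{ij})_{j=1}^n$. Let $\tilde A$ be the matrix obtained from $A$ by zeroing these entries, so that $A = \tilde A + R(y)$ with $R(y)$ supported on the $i$-th row/column. Then $\tilde A\in\mathcal{G}_n(\sigma)$, the vector $y$ has independent $\sigma$-subgaussian coordinates, and $\tilde A$ and $y$ are independent. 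Because the $F_k$'s satisfy \eqref{ass}, the function $V_j(X) := \partial_P u^{[k]}(X)_j$ is $C^\infty$ in the upper-triangular entries of $X$, so for an integer $S$ to be chosen later (possibly $n$-dependent) I would write
\[
V_j(A) = \sum_{s=0}^{S}\frac{1}{s!}\sum_{l_1,\ldots,l_s=1}^n y_{l_1}\cdots y_{l_s}\,\partial_{Q_s}u^{[k]}(\tilde A)_j \;+\; \mathcal{R}_j^{(S)}(A),
\]
where $Q_s$ appends to $P$ the indices of the pairs $(i,l_1),\ldots,(i,l_s)$ (so $|Q_s|=m+s$), and $\mathcal{R}_j^{(S)}(A)$ is the integral-form Taylor remainder.

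Substituting into $(A_n V(A))_i$ produces $\sum_{s=0}^{S}T_s + \mathcal{R}^{(S)}$ with $T_s = (s!\sqrt n)^{-1}\sum_{j,l_1,\ldots,l_s}y_j\,y_{l_1}\cdots y_{l_s}\,\partial_{Q_s}u^{[k]}(\tilde A)_j$. Conditionally on $\tilde A$, each $T_s$ is a polynomial of degree $s+1$ in the independent $\sigma$-subgaussian $y$, with $y$-independent coefficients bounded by the lemma's hypothesis applied to $\tilde A$: $\|\partial_{Q_s}u^{[k]}(\tilde A)_j\|_q\leq\Gamma_{k,q,m+s}/n^{(m+s)/2}$. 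I would then bound $\e\,T_s^{2q}$ (for even $2q\geq p$) by expanding the $2q$-th power and using $\e[y_{i_1}\cdots y_{i_r}]=0$ unless every index appears with even multiplicity; a combinatorial enumeration of admissible pairings (equivalently, a subgaussian hypercontractivity bound for polynomial chaos of degree $s+1$) then yields
\[
\|T_s\|_p \leq \frac{C_{p,\sigma}^{(s+1)/2}\,\Gamma_{k,2p,m+s}}{s!\,n^{m/2}}.
\]
Since $s!$ dominates any exponential in $s$, the series $\sum_{s\geq 0}\|T_s\|_p$ converges, so $\sum_{s=0}^{S}\|T_s\|_p \leq C/n^{m/2}$ uniformly in $S$ and $n$.

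The hard part will be the remainder
\[
\mathcal{R}^{(S)} = \frac{1}{S!\sqrt n}\sum_{j,l_1,\ldots,l_{S+1}}y_j\,y_{l_1}\cdots y_{l_{S+1}}\int_0^1(1-\theta)^S\,\partial_{Q_{S+1}}u^{[k]}(\tilde A+\theta R)_j\,d\theta,
\]
because its integrand depends on $y$ through $R$, so the polynomial-chaos argument of the previous step is no longer directly applicable. My plan here is to exploit the fact that $\tilde A+\theta R\in\mathcal{G}_n(\sigma)$ for every $\theta\in[0,1]$, so the lemma's hypothesis gives $\|\partial_{Q_{S+1}}u^{[k]}(\tilde A+\theta R)_j\|_q\leq\Gamma/n^{(m+S+1)/2}$ uniformly in $\theta$. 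A crude Minkowski/Hölder bound then yields
\[
\|\mathcal{R}^{(S)}\|_p \leq \frac{C\,\Gamma}{S!}\,n^{(S+2-m)/2},
\]
which is $\leq C/n^{m/2}$ as soon as $S!\geq n^{(S+2)/2}$. By Stirling this holds for any $S=S(n)$ with $S\gtrsim n^{1/2+\delta}$, $\delta>0$ arbitrary; since the polynomial-chaos bound for the main terms is uniform in $S$, this $n$-dependent choice is consistent. Combining the two estimates yields $\|(A_n\partial_P u^{[k]}(A))_i\|_p\leq\Upsilon_{k,p,m}/n^{m/2}$ with $\Upsilon_{k,p,m}$ depending only on $k,p,m,\sigma$ and the constants $\Gamma_{k,q,\cdot}$ from the hypothesis, as required.
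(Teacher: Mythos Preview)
Your strategy of Taylor-expanding in the row variables to decouple $a_{ij}$ from $\partial_P u^{[k]}(A)_j$ is the right instinct, and the $n$-scaling you extract from each $T_s$ is correct. The gap is in the summability. The hypothesis only asserts that for each \emph{fixed} triple $(k,p,m)$ a constant $\Gamma_{k,p,m}$ exists; it says nothing about how $\Gamma_{k,q,m+s}$ grows with $s$. Your bound on the main part is $\sum_{s\ge 0}\|T_s\|_p\le n^{-m/2}\sum_{s\ge 0}C^{s}\Gamma_{k,2p,m+s}/s!$, and this series need not converge: the lemma must hold for any family $(\Gamma_{k,q,m'})$ satisfying the hypothesis, and nothing prevents $\Gamma_{k,2p,m+s}$ from growing like $(s!)^2$. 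The same issue is fatal for your remainder: with $S=S(n)\to\infty$ you invoke $\Gamma_{k,q,m+S+1}$ at an $n$-dependent order, so even the inequality $S!\ge n^{(S+2)/2}$ does not control $\Gamma_{k,q,m+S+1}\cdot n^{(S+2)/2}/S!$. In short, expanding in all $n$ row variables forces you to use derivative bounds of unbounded order, which the hypothesis does not supply.

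The paper's proof sidesteps this by never expanding in more than $p$ variables at a time. It first writes $\e(A_nV(A))_i^p=n^{-p/2}\sum_{I\in[n]^p}\e\,a_{i\iota_1}\cdots a_{i\iota_p}V_{\iota_1}\cdots V_{\iota_p}$, and for each fixed tuple $I=(\iota_1,\ldots,\iota_p)$ zeroes only the entries $a_{i\iota_s}$ with $\iota_s$ among the \emph{distinct} values in $I$ (at most $p$ of them), then Taylor-expands each $V_{\iota_l}$ in those $\le p$ variables to the fixed order $p-1$. Thus only constants $\Gamma_{k,q,m'}$ with $m'\le m+p$ ever appear. The remainder terms (at least one factor with $p$ derivatives) pick up an extra $n^{-p/2}$ that exactly cancels the $n^p$ from summing over $I$. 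For the pure polynomial part, independence of the zeroed entries from the rest of the matrix factorizes the expectation, and the key cancellation is that if some $\iota_\ell$ appears exactly once in $I$ and is not hit by any derivative, then $\e\,a_{i\iota_\ell}=0$ kills the term; this combinatorial constraint is what brings the count of surviving $I$'s down to the required order. The moral: expand after opening the $p$-th power, not before, so that the number of perturbed variables is $p$ rather than $n$.
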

\begin{proof} Our idea is to use the Taylor expansion to track the dependence of 
	$$
	\bigl(A_n\partial_{P}u^{[k]}(A)\bigr)^p
	$$
	on each variable $a_{ii'}$ in each iteration.
	By Jensen's inequality, it suffices to assume that $p$ is even. Let $m\geq 0$ be fixed. Let $P\subset \mathbb{N}$ with $|P|=m$, $(i_r,i_r')_{r\geq 1}\in \mathcal{T}_n,$ $A\in \mathcal{G}_n(\sigma),$ and $i\in [n]$. Denote $$
	V(X)=\partial_{P}u^{[k]}(X).
	$$
	For any $D\subseteq [p]$, let $\mathcal{I}_D$ be the collection of all $I=(\iota_1,\ldots,\iota_p)\in [n]^p$ such that $\iota_s$  are distinct for $s\in D$ and 
	\begin{align}\label{lem:proof:eq1}
	\{\iota_s:s\in D^c\}\subseteq \{\iota_s:s\in D\}.
	\end{align} 
	For $I\in \mathcal{I}_D$ and $X\in M_n(\mathbb{R})$, let $X^I\in M_n(\mathbb{R})$, in which each entry of $X^I$ is equal to that of $X$ except that it vanishes on the sites $(i,\iota_s)$ and $(\iota_s,i)$ for all $s\in D.$ For $\iota\in [n]$ and $0\leq t\leq 1$, define  
	\begin{align*}
	f_\iota(t)&=V_\iota(X^I(t))
	\end{align*}
	for $$X^I(t):=tX+(1-t)X^I.$$
	Here, $V_\iota$ is the $\iota$-th entry of $V$. Write by Taylor's theorem,
	\begin{align*}
	V_\iota(X)&=f_\iota(1)=\sum_{a=0}^{p-1}\frac{f_\iota^{(a)}(0)}{a!}+\frac{1}{(p-1)!}\int_0^1(1-t)^{p-1}f_\iota^{(p)}(t)dt=:M_\iota^I(X)+N_\iota^I(X).
	\end{align*}
	Note that here
	\begin{align*}
	f_\iota^{(a)}(0)&=\sum_{s_1,\ldots,s_a\in D}\frac{\partial^aV_\iota(X^I)}{\partial{x_{i\iota_{s_a}}}\cdots \partial {x_{i\iota_{s_1}}}}x_{i\iota_{s_1}}\cdots x_{i\iota_{s_a}}.
	\end{align*}
	Also, note that $\mathcal{I}_D\cap \mathcal{I}_{D'}=\emptyset$ for distinct $D$ and $D'$ and that $[n]^p=\cup_{D\subseteq [p]}\mathcal{I}_D.$ Write
	\begin{align*}
	\e \bigl(A_nV(A)\bigr)_i^p&=\frac{1}{n^{p/2}}\sum_{D\subseteq [p]}\sum_{I\in \mathcal{I}_D}\e a_{i\iota_1}\cdots a_{i\iota_p}V_{\iota_1}(A)\cdots V_{\iota_p}(A)\\
	&=\frac{1}{n^{p/2}}\sum_{D\subseteq [p]}\sum_{I\in \mathcal{I}_D}\sum_{S\subseteq [p]}\e\Bigl(\prod_{l\in [p]}a_{i\iota_l}\Bigr)\Bigl(\prod_{l\in S}M_{\iota_l}^I(A)\Bigr)\Bigl(\prod_{l\in S^c}N_{\iota_l}^I(A)\Bigr) \\
	&=\frac{1}{n^{p/2}}\sum_{D\subseteq [p]}\sum_{I\in \mathcal{I}_D} \sum_{S\subsetneq [p]}\e\Bigl(\prod_{l\in [p]}a_{i\iota_l}\Bigr)\Bigl(\prod_{l\in S}M_{\iota_l}^I(A)\Bigr)\Bigl(\prod_{l\in S^c}N_{\iota_l}^I(A)\Bigr)\\
	&+\frac{1}{n^{p/2}}\sum_{D\subseteq [p]}\sum_{I\in \mathcal{I}_D} \e\Bigl(\prod_{l\in [p]}a_{i\iota_l}\Bigr)\Bigl(\prod_{l\in [p]}M_{\iota_l}^I(A)\Bigr)\\
	&=:\Delta_{n,1}+\Delta_{n,2}.
	\end{align*}
	
	To control these two terms, note that from $x^{2p}\leq (2p)!\cosh x$ and the $\sigma$-subgaussianity, we have the bound
	\begin{align*}
	\sup_{i,j\in [n]}\bigl(\e|a_{ij}|^{p}\bigr)^{1/p}\leq \sup_{i,j\in [n]}\bigl(\e|a_{ij}|^{2p}\bigr)^{1/2p}\leq \xi_p:=(2p)!e^{\sigma^2/2},\,\,\forall p\geq 1.
	\end{align*}
    First we handle $\Delta_{n,1}$. From the given assumption,
	\begin{align*}
	&\Bigl(\e\Bigl|\frac{\partial^aV_{\iota_l}(A^I)}{\partial{x_{i\iota_{s_a}}}\cdots \partial {x_{i\iota_{s_1}}}}a_{i\iota_{s_1}}\cdots a_{i\iota_{s_a}}\Bigr|^{2p}\Bigr)^{1/2p}\\
	&\leq \Bigl(\e\Bigl|\frac{\partial^aV_{\iota_l}(A^I)}{\partial{x_{i\iota_{s_a}}}\cdots \partial {x_{i\iota_{s_1}}}}\Bigr|^{4p}\Bigr)^{1/4p}\Bigl(\e |a_{i\iota_{s_1}}\cdots a_{i\iota_{s_a}}|^{4p}\Bigr)^{1/4p} \leq \frac{\Gamma_{k,4p,a+m}\xi_{4ap}^a}{n^{(a+m)/2}}.
	\end{align*}
	Using the Minkowski inequality, this inequality, and \eqref{lem1:eq1} yields that after dropping $1/a!$,
	\begin{align*}
	\prod_{l\in S}\bigl(\e|M_{\iota_l}^I(A)|^{2p}\bigr)^{1/2p}
	&\leq \prod_{l\in S}\sum_{a=0}^{p-1} \sum_{s_1,\ldots,s_a\in D}\frac{\Gamma_{k,4p,a+m}\xi_{4ap}^{a}}{n^{(a+m)/2}}\\
	&\leq \frac{1}{n^{|S|m/2}}\Bigl(\sum_{a=0}^{p-1} |D|^a\Gamma_{k,4p,a+m}\xi_{4ap}^{a}\Bigr)^{|S|}=:\frac{C_{D,S}}{n^{|S|m/2}}.
	\end{align*}
	Also, since
	\begin{align*}
	\e \Bigl|\int_0^1(1-t)^{p-1}f_\iota^{(p)}(t)dt\Bigr|^{2p}&\leq \e \Bigl(\int_0^1|f_\iota^{(p)}(t)|dt\Bigr)^{2p}\leq \int_0^1\e |f_\iota^{(p)}(t)|^{2p}dt,
	\end{align*}
	we have, by dropping $1/(p-1)!$, that 
	\begin{align*}
	\prod_{l\in S^c}\bigl(\e|N_{\iota_l}^I(A)|^{2p}\bigr)^{1/2p}
	&\leq 	\prod_{l\in S^c}\sum_{s_1,\ldots,s_a\in D}\Bigl(\int_0^1\e\Bigl|\frac{\partial^pV_{\iota_l}(A^I(t))}{\partial{x_{i\iota_{s_p}}}\cdots \partial {x_{i\iota_{s_1}}}}a_{i\iota_{s_1}}\cdots a_{i\iota_{s_p}}\Bigr|^{2p}dt\Bigr)^{1/2p}\\
	&\leq\prod_{l\in S^c}\frac{|D|^p\Gamma_{k,4p,p+m} \xi_{4p^2}^{p}}{n^{(p+m)/2}}\\
	&=\frac{1}{n^{(p+m)|S^c|/2}}\prod_{l\in S^c}|D|^p\Gamma_{k,4p,p+m} \xi_{4p^2}^{p}\\
	&=:\frac{C_{D,S}'}{n^{(p+m)|S^c|/2}}.
	\end{align*}
	Combining these together leads to
	\begin{align*}
	|\Delta_{n,1}|&\leq \frac{1}{n^{p/2}}\sum_{D\subseteq [p]}\sum_{I\in \mathcal{I}_D} \sum_{S\subsetneq [p]} \prod_{l\in [p]}(\e |a_{i\iota_l}|^{2p})^{1/2p}\prod_{l\in S}\bigl(\e|M_{\iota_l}^I(A)|^{2p}\bigr)^{1/2p}\prod_{l\in S^c}\bigl(\e|N_{\iota_l}^I(A)|^{2p}\bigr)^{1/2p}\\
	&\leq \frac{1}{n^{p/2}} n^{p}\xi_{2p}^p\sum_{D\subseteq [p]}\sum_{S\subsetneq [p]}\frac{C_{D,S}}{n^{|S|m/2}}\frac{C_{D,S}'}{n^{(p+m)|S^c|/2}}\\
	&=\xi_{2p}^p \sum_{D\subseteq [p]}\sum_{S\subsetneq [p]}\frac{C_{D,S}C_{D,S}'}{n^{(p(|S^c|-1)+pm)/2}}\\
	&\leq \frac{\xi_{2p}^p}{n^{pm/2}} \sum_{D\subseteq [p]}\sum_{S\subsetneq [p]}C_{D,S}C_{D,S}',
	\end{align*}
	where the last inequality used the fact that $|S^c|\geq 1$ since $S\subsetneq [p].$
	
	Next we turn to the control of $\Delta_{n,2},$ which requires more steps. Let $D\subseteq [n]$ and $I\in \mathcal{I}_D.$ Write
	\begin{align*}
	\prod_{l\in [p]}M_{\iota_l}^I(A)&=\sum_{\bar a}\frac{1}{\bar a!}\sum_{\bar s_{a_1}^1,\cdots, \bar s_{a_p}^p}\Bigl(\prod_{r\in [p]}\prod_{b=1}^{a_r}a_{i\iota_{s_b^r}}\Bigr)\Bigl(\prod_{r\in [p]}\frac{\partial^{a_r}V_{\iota_r}(A^I)}{\partial{x_{i\iota_{s_{1}^r}}}\cdots \partial {x_{i\iota_{s_{a_r}^r}}}}\Bigr),
	\end{align*}
	where the first summation is over all $\bar a=(a_1,\ldots,a_p)\in \{0,\ldots,p-1\}^p$ and $\bar a!:=a_1!\cdots a_p!$, while the second summation is over all $\bar s_{a_r}^r=(s_1^r,\ldots,s_{a_r}^r)\in D^{a_r}$ for $1\leq r\leq p.$ Set $\mathcal{S}_{\bar a}=D^{a_1}\times\cdots \times D^{a_p}.$
	Note that from our construction of $A^I$, its entries at $(i,\iota_s)$ and $(\iota_s,i)$ are all zero for all $s\in D$ and consequently, \eqref{lem:proof:eq1} implies that $A^I$ is independent of $a_{i\iota_1},\ldots,a_{i\iota_p}.$ It follows that 
	\begin{align*}
	&\Delta_{n,2}=
	\frac{1}{n^{p/2}}\sum_{\bar a}\frac{1}{\bar a!}\sum_{D\subseteq [p]}\sum_{I\in \mathcal{I}_D}\sum_{\mathcal{S}_{\bar a}}\mathcal{AL}(I,\bar s_{a_1}^1,\ldots, \bar s_{a_p}^p),
	\end{align*}
	where
	\begin{align*}
	\mathcal{A}(I,\bar s_{a_1}^1,\ldots, \bar s_{a_p}^p)&:=\e\Bigl(a_{i\iota_1}\cdots a_{i\iota_p}\prod_{r\in [p]}\prod_{b=1}^{a_r}a_{i\iota_{s_b^r}}\Bigr),\\
	\mathcal{L}(I,\bar s_{a_1}^1,\ldots, \bar s_{a_p}^p)&:=\e\Bigl(\prod_{r\in [p]}\frac{\partial^{a_r}V_{\iota_r}(A^I)}{\partial{x_{i\iota_{s_{1}^r}}}\cdots \partial {x_{i\iota_{s_{a_r}^r}}}}\Bigr),\\
	\mathcal{AL}(I,\bar s_{a_1}^1,\ldots, \bar s_{a_p}^p)&:=\mathcal{A}(I,\bar s_{a_1}^1,\ldots, \bar s_{a_p}^p)\mathcal{L}(I,\bar s_{a_1}^1,\ldots, \bar s_{a_p}^p).
	\end{align*}
	To control the right-hand side, note that 
	\begin{align*}
	\bigl|\mathcal{A}(I,\bar s_{a_1}^1,\ldots, \bar s_{a_p}^p)\bigr|\leq \xi_{p+|\bar a|}^{p+|\bar a|}
	\end{align*}
	and from \eqref{lem1:eq1},
	\begin{align*}
	\bigl|\mathcal{L}(I,\bar s_{a_1}^1,\ldots, \bar s_{a_p}^p)\bigr|&\leq \prod_{r\in [p]}\Bigl(\e\Bigl|\frac{\partial^{a_r}V_{\iota_r}(A^I)}{\partial{x_{i\iota_{s_{1}^r}}}\cdots \partial {x_{i\iota_{s_{a_r}^r}}}}\Bigr|^p\Bigr)^{1/p}\\
	&\leq \prod_{r\in [p]}\frac{\Gamma_{k,p,m+a_r}}{n^{(m+a_r)/2}}=\frac{1}{n^{(pm+|\bar a|)/2}}\prod_{r\in [p]}\Gamma_{k,p,m+a_r},
	\end{align*}
	where $|\bar a|:=a_1+\cdots+a_p.$ For any fixed $D\subseteq [p]$ and $0\leq b\leq |D|$, let $\mathcal{I}_{D,b}$ be the collection of all $I\in \mathcal{I}_D$ such that there are exactly $b$ many entries that appear once in $I.$ For any $I\in \mathcal{I}_{D,b}$, let $\mathcal{S}_{\bar a,I,b}$ be the collection of all $(\bar s_{a_1}^1,\ldots, \bar s_{a_p}^p)\in \mathcal{S}_{\bar a}$ such that all entries that appear exactly once in $I$ also appear in $$\{\iota_{s_1^1},\ldots,\iota_{s_{a_1}^1},\iota_{s_1^2},\ldots,\iota_{s_{a_2}^2},\ldots,\iota_{s_{1}^p},\ldots,\iota_{s_{a_p}^p}\}.$$ Note that when $b=0,$ every entry in $I\in \mathcal{I}_{D,0}$ must appear at least twice in $I$ and hence,  $\mathcal{S}_{\bar a,I,b}=\mathcal{S}_{\bar a},$ or equivalently, $\mathcal{S}_{\bar a,I,b}^c=\emptyset.$ Also, note that $\mathcal{S}_{\bar a,I,b}$ is nonempty only if $|\bar a|\geq b.$ From these, we can write
	\begin{align*}
	\sum_{\bar a}\sum_{I\in \mathcal{I}_D}\sum_{\mathcal{S}_{\bar a}}&=\sum_{\bar a}\sum_{b=0}^{|D|}\sum_{I\in \mathcal{I}_{D,b}}\sum_{\mathcal{S}_{\bar a,I,b}}+\sum_{\bar a}\sum_{b=0}^{|D|}\sum_{I\in \mathcal{I}_{D,b}}\sum_{\mathcal{S}_{\bar a,I,b}^c}\\
	&=\sum_{\bar a}\sum_{b=0}^{|D|\wedge |\bar a|}\sum_{I\in \mathcal{I}_{D,b}}\sum_{\mathcal{S}_{\bar a,I,b}}+\sum_{\bar a}\sum_{b=1}^{|D|}\sum_{I\in \mathcal{I}_{D,b}}\sum_{\mathcal{S}_{\bar a,I,b}^c}.
	\end{align*}
	To control the first summation, note that for any $I\in \mathcal{I}_{D,b},$ there are exactly $b$ many entries in $I$ that appear once and the other entries are repeated. This implies that $$|\mathcal{I}_{D,b}|\leq C_{D,b}n^{b+\lfloor (p-b)/2\rfloor},$$
	where $C_{D,b}$ is a universal constant independent of $n.$ Plugging this inequality to the above equation yields that
	\begin{align}
	\begin{split}\notag
	&\frac{1}{n^{p/2}}\sum_{\bar a}\sum_{b=0}^{|D|\wedge |\bar a|}\sum_{I\in \mathcal{I}_{D,b}}\sum_{\mathcal{S}_{\bar a,I,b}}\bigl|\mathcal{AL}(I,\bar s_{a_1}^1,\ldots, \bar s_{a_p}^p)\bigr|\\
	&\leq \frac{1}{n^{p/2}}\sum_{\bar a}\sum_{b=0}^{|D|\wedge |\bar a|}\frac{|\mathcal{I}_{D,b}||D|^{|\bar a|}\xi_{p+|\bar a|}^{p+|\bar a|}}{n^{(pm+|\bar a|)/2}}\prod_{r\in [p]}\Gamma_{k,p,m+a_r}\\
	&\leq\sum_{\bar a}\sum_{b=0}^{|D|\wedge |\bar a|}\frac{1}{n^{pm/2+|\bar a|/2+p/2-b-\lfloor (p-b)/2\rfloor}}C_{D,b}|D|^{|\bar a|}\xi_{p+|\bar a|}^{p+|\bar a|}\prod_{r\in [p]}\Gamma_{k,p,m+a_r}\\
	&\leq \sum_{\bar a}\sum_{b=0}^{|D|\wedge |\bar a|}\frac{1}{n^{pm/2+(p-b)/2-\lfloor (p-b)/2\rfloor}}C_{D,b}|D|^{|\bar a|}\xi_{p+|\bar a|}^{p+|\bar a|}\prod_{r\in [p]}\Gamma_{k+,p,m+a_r}
	\end{split}\\ 
	\begin{split}\label{lem:proof:eq2}
	&\leq \frac{1}{n^{pm/2}}\sum_{\bar a}\sum_{b=0}^{|D|\wedge |\bar a|}C_{D,b}|D|^{|\bar a|}\xi_{p+|\bar a|}^{p+|\bar a|}\prod_{r\in [p]}\Gamma_{k,p,m+a_r},
	\end{split}
	\end{align}
	where the third inequality used $|\bar a|\geq b.$ As for the second summation, observe that for $1\leq b\leq |D|$, if $I\in \mathcal{I}_{D,b}$ and $(\bar s_{a_1}^1,\ldots, \bar s_{a_p}^p)\in \mathcal{S}_{\bar a,I,b}^c$, then there exists one entry, say $\iota_\ell$, in $I$ that appears only once in $I$ and it does not appear in the entries of $\bar s_{a_1}^1,\cdots, \bar s_{a_p}^p.$
	Hence, $a_{i\iota_\ell}$ is independent of 
	$$
	a_{i\iota_1}\cdots a_{i\iota_{\ell-1}}a_{i\iota_{\ell+1}}\cdots a_{i\iota_p}\prod_{r\in [p]}\prod_{b=1}^{a_r}a_{i\iota_{s_b^r}},
	$$  
	which results in 
	$\e\mathcal{A}(I,\bar s_{a_1}^1,\ldots, \bar s_{a_p}^p)=0.$	Therefore,
	\begin{align}\label{lem:proof:eq3}
	\frac{1}{n^{p/2}}\sum_{\bar a}\sum_{b=1}^{|D|}\sum_{I\in \mathcal{I}_{D,b}}\sum_{\mathcal{S}_{\bar a,I,b}^c}\mathcal{AL}(I,\bar s_{a_1}^1,\ldots, \bar s_{a_p}^p)=0.
	\end{align}
	Finally, combining \eqref{lem:proof:eq2} and \eqref{lem:proof:eq3} together and dropping $1/{\bar a!}$ lead to
	\begin{align*}
	|\Delta_{n,2}|&\leq \frac{1}{n^{pm/2}}\sum_{D\subseteq [p]}\sum_{\bar a}\sum_{b=0}^{|D|\wedge |\bar a|}C_{D,b}|D|^{|\bar a|}\xi_{p+|\bar a|}^{p+|\bar a|}\prod_{r\in [p]}\Gamma_{k,p,m+a_r}.
	\end{align*}
	This completes our proof.
\end{proof}

\subsection{Proof of Proposition \ref{prop1}}

Let $m=|P|.$ From Lemma \ref{lem1}, the Minkowski inequality, and the Cauchy-Schwarz inequality, we can compute the partial derivatives of $U(X)_i$ to see that $\bigl(\e\bigl|\partial_{P}U(A)_i\bigr|^p\bigr)^{1/p}$ is bounded above by a sum, in which each summand is of the form 
\begin{align*}
\bigl(\e\bigl|\partial_{P_1}v_{j_1}(A)\cdots \partial_{P_r}v_{j_r}(A)\bigr|^{p}\bigr)^{1/p}
\end{align*} 
for some $1\leq r\leq m$ and $(j_1,\ldots, j_r) \in\mathcal{J}_r(2(k+1))$. More importantly, $\mathcal{P}=\{P_1,\ldots,P_r\}\in \mathcal{P}_r(P)$ and each term $v_{j_s}(A)$ is equal to either $(\hx_nu^{[\ell]}(A))_i$ or $u^{[\ell]}(A)_i$ for some $0\leq \ell\leq k.$  Now, using the H\"older inequality gives that
\begin{align*}
\bigl(\e\bigl|\partial_{P_1}v_{j_1}(A)\cdots \partial_{P_r}v_{j_r}(A)\bigr|^{p}\bigr)^{1/p}&\leq \bigl(\e\bigl|\partial_{P_1}v_{j_1}(A)\bigr|^{rp}\bigr)^{1/rp}\cdots \bigl(\e\bigl|\partial_{P_r}v_{j_r}(A)\bigr|^{rp}\bigr)^{1/rp}.
\end{align*}
Here, from Proposition \ref{prop2}, each term on the right-hand side is bounded above by a term of order $1/n^{|P_s|/2}$ and  they together yield that a bound of order $1/n^{|P|/2}$ since $|P_1|+\cdots+|P_r|=|P|.$ This completes our proof.

\section{Proof of Theorem \ref{thm0}}\label{sec:sec5}

We establish  universality for the generalized AMP in Definition \ref{def1}. Recall that in Theorem \ref{concentration}, we use $\tilde{\p}$ and $\tilde{\e}$ to denote the probability and expectation conditionally on $u^0,Z$. The following proposition shows that conditionally on $u^0,Z,$ the first two moments of the AMP orbits between $A$ and $G$ asymptotically match each other. 

\begin{proposition}\label{thm1}
	Consider the AMP orbit in Definition \ref{def1} with the initialization $u^{[0]}(X) = u^{0}$ and assume that $(F_k)_{k\geq 0}$ satisfies \eqref{ass}. Let $k\geq 0.$ Assume that $\phi\in C^\infty(\mathbb{R}^{k+1})$ has uniformly bounded partial derivatives of any nonzero orders. There exists a universal constant $C$ independent of $n$ such that for any $n\geq 2,$
	\begin{align}\label{thm1:eq1}
	\e\bigl|\tilde\e\Phi_{k,n}(A)-\tilde\e\Phi_{k,n}(G)\bigr|\leq \frac{C}{\sqrt{n}}
	\end{align}
	and
		\begin{align}\label{thm1:eq2}
	\e\bigl|\tilde\e \Phi_{k,n}(A)^2-\tilde\e\Phi_{k,n}(G)^2\bigr|\leq \frac{C}{\sqrt{n}},
	\end{align}
	where $\Phi_{k,n}(X)$ is defined in \eqref{add:eq2}.
\end{proposition}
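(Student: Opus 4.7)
The plan is to control the two differences via a Gaussian interpolation. Define $A(t) := \sqrt{t}A + \sqrt{1-t}G$ for $t\in[0,1]$, so that $A(0) = G$ and $A(1) = A$, and set $\varphi(t) := \tilde\e\Phi_{k,n}(A(t))$ and $\psi(t) := \tilde\e\Phi_{k,n}(A(t))^2$. The bounds \eqref{thm1:eq1} and \eqref{thm1:eq2} will follow from $\e\bigl|\int_0^1 \varphi'(t)\,dt\bigr| \le C/\sqrt{n}$ and the analogous statement for $\psi$. Differentiating entrywise via the chain rule yields
$$\varphi'(t) = \sum_{i\leq i'} c_{ii'}\,\tilde\e\!\left[\Bigl(\tfrac{a_{ii'}}{2\sqrt{t}} - \tfrac{g_{ii'}}{2\sqrt{1-t}}\Bigr)\,\partial_{ii'}\Phi_{k,n}(A(t))\right],$$
with $c_{ii'}\in\{1,2\}$ absorbing the symmetry factor and $\partial_{ii'}$ differentiating in the matrix entry $X_{ii'}$; an analogous formula holds for $\psi'(t)$ with $\partial_{ii'}\Phi_{k,n}$ replaced by $2\Phi_{k,n}\cdot\partial_{ii'}\Phi_{k,n}$.

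Next I would perform an approximate Gaussian integration by parts at each entry. Write $A(t)^{(ii')}$ for $A(t)$ with its $(i,i')$ and $(i',i)$ entries zeroed out, and Taylor expand $\partial_{ii'}\Phi_{k,n}(A(t))$ as a polynomial in the scalar $x_{ii'}(t) := \sqrt{t}a_{ii'} + \sqrt{1-t}g_{ii'}$ about $x_{ii'} = 0$, up to some order $p$, with Lagrange remainder involving $\partial^{p+1}_{ii'}\Phi_{k,n}$. Because $(a_{ii'},g_{ii'})$ is independent of $A(t)^{(ii')}$, each fixed Taylor order splits into the product of a polynomial moment of $(a_{ii'},g_{ii'})$ and an expectation of $\partial^{m}_{ii'}\Phi_{k,n}(A(t)^{(ii')})$. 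Since $a_{ii'}$ and $g_{ii'}$ share mean $0$ and variance $1$, the Taylor-order $0$ and $1$ contributions cancel between the $a$- and $g$-sides, leaving only higher-order contributions — precisely those captured by the approximate IBP error estimates collected in the Appendix.

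The surviving contributions are controlled by Proposition \ref{prop1}. Writing $\Phi_{k,n}(X) = \frac{1}{n}\sum_i U(X)_i$ with $U(X)_i = \phi(u^{[k]}_i(X),\ldots,u^{[0]}_i(X))$ fits the hypothesis of Proposition \ref{prop1}, so $(\tilde\e|\partial_P\Phi_{k,n}|^p)^{1/p} \le C_p/n^{|P|/2}$ for every fixed order $|P|$; subgaussianity keeps the polynomial moments of $(a_{ii'},g_{ii'})$ uniformly bounded. For the second-moment bound, $\Phi_{k,n}^2 = \frac{1}{n^2}\sum_{i,j}U(X)_iU(X)_j$ can be viewed as a smooth function of a doubled AMP orbit, and Proposition \ref{prop1} applied with an appropriate vector-valued $U$ again supplies the required derivative bounds. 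Summing over the $O(n^2)$ matrix entries, combining with the integrable factors $t^{-1/2}, (1-t)^{-1/2}$ in $t$, and taking the outer $\e$ in $u^0, Z$, then yields the desired $C/\sqrt{n}$ rates in both \eqref{thm1:eq1} and \eqref{thm1:eq2}.

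The main obstacle is to extract enough decay out of the per-entry bound $C/n^{m/2}$ in Proposition \ref{prop1} to beat the $O(n^2)$ entry count in the summation. This forces one to take the Taylor order $p$ large and to exploit carefully the moment-matching cancellations at low orders; the approximate Gaussian integration by parts in the Appendix is engineered to exhibit exactly these cancellations, and the novel per-entry moment bounds of Proposition \ref{prop1}, proved via the Taylor-to-order-$p$ technique of Section~\ref{sec:sec4}, are what makes the final accounting sharp enough to close.
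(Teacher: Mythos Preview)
Your interpolation setup is the same as the paper's, but the per-entry Lindeberg argument you outline does not close. After the two moment-matching cancellations (order $m=0,1$), the order-$m=2$ term in your single-variable Taylor expansion is
\[
\sum_{i<i'}\tilde\e\bigl[\dot a_{ii'}(t)\,a_{ii'}(t)^2\bigr]\cdot\tilde\e\bigl[\partial_{ii'}^{3}\Phi_{k,n}(A(t)^{(ii')})\bigr],
\]
and a direct computation gives $\tilde\e[\dot a_{ii'}(t)a_{ii'}(t)^2]=\tfrac{\sqrt t}{2}\,\e a_{ii'}^3$, which is of order one (subgaussianity alone does not force $\e a_{ii'}^3=0$). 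Proposition~\ref{prop1} only yields $\e|\partial_{ii'}^3\Phi_{k,n}|\le C n^{-3/2}$, so summing over the $\binom{n}{2}$ entries produces a bound of order $n^{1/2}$, not $n^{-1/2}$. Taking the Taylor order $p$ large does not help: the problem is a fixed finite-order term, not the remainder. And Lemmas~\ref{lem3}--\ref{lem6} in the Appendix are \emph{not} single-variable estimates; they concern products $f(a(t))\,a_1(t)\cdots a_s(t)\,\dot a_1(t)$ in $s$ \emph{independent} interpolated variables and gain a factor $n^{-s_0/2}$ with $s_0\asymp s$, which is exactly the mechanism your per-entry scheme is missing.

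The paper's argument proceeds differently. Instead of Taylor expanding in a single entry, it differentiates $\Phi_{k,n}(A(t))$ \emph{through the AMP iteration}, which writes $\Phi_{k,n}'(t)$ as a sum over directed paths $I=(i_0,\ldots,i_{r+1})$ of terms $V_I(t)$ containing the product $\dot a_{e_I(0)}(t)\prod_{l=1}^{r}(a_{e_I(l)}(t)+n^{-1/2}z_{e_I(l)})$ together with a smooth prefactor $U_I$. One then applies the multivariate approximate integration by parts of Lemmas~\ref{lem3}--\ref{lem6} jointly across the edges of $I$ that appear only once; each such edge contributes an extra factor $n^{-1/2}$ via Proposition~\ref{prop1}. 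The resulting bound (Proposition~\ref{prop3}) is combined with a combinatorial count of paths by their edge-multiplicity profile (Lemma~\ref{lem5}); the point is that paths with few single-appearance edges are rare, while paths with many single-appearance edges pick up many extra factors of $n^{-1/2}$. This trade-off is what produces the final $n^{-1/2}$ rate, and it cannot be recovered from a one-entry-at-a-time scheme.
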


The proof of Theorem \ref{thm0} is argued as follows. From Proposition \ref{add:prop1}, it suffices to assume that the functions $\phi$ and $F_k$'s in Definition \ref{def1} have uniformly bounded partial derivatives of any nonzero orders. First of all, we claim that
\begin{align*}
\lim_{n\to\infty}\e\bigl(\Phi_{k,n}(A)-\tilde{\e}\Phi_{k,n}(A)\bigr)^2&=\lim_{n\to\infty}\e\bigl(\Phi_{k,n}(G)-\tilde{\e}\Phi_{k,n}(G)\bigr)^2=0.
\end{align*}Note that from Theorem \ref{concentration}, 
\begin{align*}
\lim_{n\to\infty}\e|\Phi_{k,n}(G)-\tilde{\e}\Phi_{k,n}(G)|^2=0.
\end{align*}
Now from Proposition \ref{thm1},
\begin{align*}
\lim_{n\to\infty}\bigl|\e\Phi_{k,n}(A)^2-\e\Phi_{k,n}(G)^2\bigr|\leq \lim_{n\to\infty}\e\bigl|\tilde\e\Phi_{k,n}(A)^2-\tilde\e\Phi_{k,n}(G)^2\bigr|=0.
\end{align*}
Since
\begin{align*}
&\bigl|\e\bigl(\tilde\e\Phi_{k,n}(A)\bigr)^2-\e\bigl(\tilde\e\Phi_{k,n}(G)\bigr)^2\bigr|\\
&= \e\bigl(\bigl|\tilde\e\Phi_{k,n}(A)-\tilde\e\Phi_{k,n}(G)\bigr|\bigl|\tilde\e\Phi_{k,n}(A)+\tilde\e\Phi_{k,n}(G)\bigr|\bigr)\\
&\leq \bigl(\e\bigl(\bigl|\tilde\e\Phi_{k,n}(A)-\tilde\e\Phi_{k,n}(G)\bigr|^2\bigr)^{1/2}\bigl(\e\bigl|\tilde\e\Phi_{k,n}(A)+\tilde\e\Phi_{k,n}(G)\bigr|^2\bigr)^{1/2}\\
&\leq \bigl(\e\bigl(\bigl|\tilde\e\Phi_{k,n}(A)-\tilde\e\Phi_{k,n}(G)\bigr|^2\bigr)^{1/2}\bigl(\bigl(\e\Phi_{k,n}(A)^2\bigr)^{1/2}+\bigl(\e\Phi_{k,n}(G)^2\bigr)^{1/2}\bigr),
\end{align*}
it follows from Proposition \ref{thm1} and the moment control in \eqref{add:lem3:eq2}, 
\begin{align*}
\lim_{n\to\infty}\bigl|\e\bigl(\tilde\e\Phi_{k,n}(A)\bigr)^2-\e\bigl(\tilde\e\Phi_{k,n}(G)\bigr)^2\bigr|=0.
\end{align*}
Putting these limits together yields the claim. Consequently, the proof of Theorem \ref{thm0} follows by our claim and Proposition \ref{thm1},
\begin{align*}
\e \bigl(\Phi_{k,n}(A)-\Phi_{k,n}(G)\bigr)^2
&\leq 9\e\bigl(\Phi_{k,n}(A)-\tilde{\e}\Phi_{k,n}(A)\bigr)^2\\
&+9\e\bigl(\tilde{\e} \Phi_{k,n}(A)-\tilde\e\Phi_{k,n}(G)\bigr)^2\\
&+9\e\bigl(\Phi_{k,n}(G)-\tilde{\e}\Phi_{k,n}(G)\bigr)^2\rightarrow 0.
\end{align*}

For the rest of this section, we establish  Proposition \ref{thm1} in five subsections using the Gaussian interpolation and approximate Gaussian integration by parts. In doing these,  Proposition \ref{prop1} will be of great use in tracking the error terms. Subsection \ref{sec6.1} shows that to prove Proposition~\ref{thm1}, it suffices to assume that the main diagonals of $A,G,Z$ are all equal to zero. The Gaussian interpolation between $\Phi_{k,n}(A)$ and $\Phi_{k,n}(G)$ is introduced in Subsection~\ref{sec:interpolation} and the control of its derivative  is handled in Subsection~\ref{sec6.3}. Finally, the proofs of \eqref{thm1:eq1} and \eqref{thm1:eq2} are established in Subsections \ref{sec6.4} and \ref{sec6.5}, respectively.

\subsection{Deletion of the main diagonal}\label{sec6.1}
By the virtue of Proposition \ref{prop0}, it suffices to assume that the main diagonals in $A$ and $Z$ are zero. To see this, recall $\Phi_{k,n}(X),\Delta_k(X)$, and $\Theta_k(X)$ from Proposition \ref{prop0}. Assume that $A'$ is equal to $A$ except that the main diagonal  vanishes. Note that from Lemma \ref{add:lem5}, $\|A_n\|_2$ and $\|A_n'\|_2$ are of order $O(1)$.  
On the other hand, since
$$\|A_n-A_n'\|_2=\frac{1}{\sqrt{n}}\max_{1\leq i\leq n}|a_{ii}|$$
and
\begin{align*}
\p\Bigl(\frac{1}{\sqrt{n}}\max_{1\leq i\leq n}|a_{ii}|\geq t \Bigr)\leq \sum_{i\in[n]}P(|a_{ii}|\geq t\sqrt{n})\leq ne^{-n\sigma^2 t^2/2},
\end{align*}
these imply that in probability,
\begin{align*}
\lim_{n\to\infty}\|A_n-A_n'\|_2=0.
\end{align*}
As a result, Proposition \ref{prop0} implies that the AMP orbits corresponding to $(A,Z)$ and $(A',Z)$ satisfy that in probability,
\begin{align*}
\lim_{n\to\infty}\bigl|\Phi_{k,n}(A)-\Phi_{k,n}(A')\bigr|=0,
\end{align*}
which together with Lemma \ref{add:lem3} gives that 
\begin{align*}
\lim_{n\to\infty}\e\bigl|\Phi_{k,n}(A)-\Phi_{k,n}(A')\bigr|=0.
\end{align*} 
Next one can prove by an almost identical argument to show that the AMP orbits correspond to $(A',Z)$ and $(A',Z')$ are also asymptotically the same under the $L^1(\p)$-distance, where $Z'$ is the same as $Z$ except that its main diagonal is zero. From this, in what follows, we assume that the main diagonals of $A$, $G$, and $Z$ are all equal to zero.

\subsection{Interpolation}\label{sec:interpolation}

Define the Gaussian interpolation between $A$ and $G$ by 
\begin{align*}
A(t)&=(a_{ij}(t))_{i,j\in[n]}=\sqrt{t}A+\sqrt{1-t}G,\,\,0\leq t\leq 1.
\end{align*}
Denote 
\begin{align*}
A_n(t)=\frac{A(t)}{\sqrt{n}}
\end{align*}
and
\begin{align*}
\hat A_n(t)=\frac{A(t)}{\sqrt{n}}+\frac{Z}{n}.
\end{align*} 
For $\phi\in C^\infty(\mathbb{R}^{k+1})$ with uniformly bounded partial derivatives of all nonzero orders, define
\begin{align*}
\Phi_{k,n}(t)&=\tilde\e\Phi_{k,n}(A(t)).
\end{align*}
Note that
\begin{align*}
	\e\bigl|\tilde\e\Phi_{k,n}(A)-\tilde\e\Phi_{k,n}(G)\bigr|=\e\bigl|\Phi_{k,n}(1)-\Phi_{k,n}(0)\bigr|.
\end{align*}
To show \eqref{thm1:eq1}, our goal is to show that 
\begin{align*}
\int_0^1\e |\Phi_{k,n}'(t)|dt\leq \frac{C}{\sqrt{n}}
\end{align*}
for some constant $C$ independent of $n.$
Note that $u^{[0]}(X)=u$. A direct differentiation gives
\begin{align}
\begin{split}\label{diff}
\Phi_{k,n}'(t)&=\sum_{\ell=1}^{k}\frac{1}{n}\sum_{i=1}^n\tilde\e\Bigl(\partial_{y_\ell}\phi\bigl(u^{[k]}(A(t)),\ldots, u^{[0]}(A(t))\bigr)\circ\frac{d}{dt}u^{[\ell]}(A(t))\Bigr)_i,\,\,0<t<1.
\end{split}
\end{align}
Here and thereafter, if $v,v'\in \mathbb{R}^n$, we define $v\circ v'=(v_iv_i')_{i\in [n]}$ as the Hadamard product between $v$ and $v'$. Note that this operation is commutative.

To simplify our notation, denote
\begin{align*}
\dot A_n(t)&=(\dot{a}_{ii'}(t))_{i,i'\in[n]},
\end{align*}
for
$$
\dot{a}_{ii'}(t)=\frac{1}{2}\Bigl(\frac{a_{ii'}}{\sqrt{t}}-\frac{g_{ii'}}{\sqrt{1-t}}\Bigr).
$$
Also, denote
\begin{align*}
u^{[\ell]}(t)&=u^{[\ell]}(A(t)),\\
\partial_{y_r}F_\ell(t)&=\partial_{y_r}F_\ell(\ha_n(t)u^{[\ell]}(t),u^{[\ell-1]}(t),\ldots,u^{[0]}(t)).
\end{align*}
Observe that
\begin{align*}
\frac{d}{dt}u^{[1]}(t) &=\partial_{y_0}F_0(t) \circ (\dot A_n(t) u^{[0]}(t) ),\\
\frac{d}{dt}u^{[2]}(t) &=\partial_{y_1}F_1 (t)\circ (\dot A_n(t) u^{[1]}(t) )\\
&+\partial_{y_1}F_1(t) \circ \bigl(\ha_n \bigl(\partial_{y_0}F_0(t) \circ (\dot A_n(t) u^{[0]}(t) )\bigr)\bigr),
\end{align*}
and
\begin{align*}
\frac{d}{dt}u^{[3]}(t) &=\partial_{y_2}F_2 (t)\circ (\dot A_n(t) u^{[2]}(t) )\\
&+\partial_{y_2}F_2 (t)\circ \bigl(\ha_n(t) \bigl(\partial_{y_1}F_1(t) \circ (\dot A_n(t) u^{[1]} (t))\bigr)\bigr)\\
&+\partial_{y_2}F_2(t) \circ \Bigl(\ha_n(t) \Bigl(\partial_{y_1}F_1(t) \circ \bigl(\ha_n(t) \bigl(\partial_{y_0}F_0(t) \circ (\dot A_n(t) u^{[0]}(t) )\bigr)\bigr)\Bigr)\Bigr)\\
&+\partial_{y_1}F_2(t) \circ \partial_{y_0}F_0(t) \circ (\dot A_n(t) u^{[0]}(t) ).
\end{align*}
For general $1\leq \ell\leq k,$
\begin{align*}
\frac{d}{dt}u^{[\ell]}(t)
&=\partial_{y_{\ell-1} }F_{\ell-1} (t)\Bigl(\dot A_n(t)u^{[\ell-1]}(t)+\ha_n(t)\frac{d}{dt}u^{[\ell-1]}(t)\Bigr)+\sum_{s=1}^{\ell -2}\partial_{y_s}F_{\ell-1} (t)\frac{d}{dt}u^{[s]}(t).
\end{align*}
From these equations, one readily sees that the vector 
$$
\partial_{y_\ell}\phi\bigl(u^{[k]}(t),\ldots, u^{[0]}(t)\bigr)\circ\frac{d}{dt}u^{[\ell]}(A(t))
$$
appearing in \eqref{diff} can be written as a summation of column vectors, in which each summand is of the form $w^r(t)=(w_i^r(t))_{i\in [n]}$ for some $0\leq r\leq \ell-1$ that is defined by an iterative procedure through some functions $L^0,L^1,\ldots,L^{r+1}\in C^{\infty}(\mathbb{R}^{2\ell})$, whose partial derivatives of any nonzero orders are uniformly bounded. More precisely, starting from
\begin{align*}
w^{r,[0]}(t)&=U^{[1]}(A(t))\circ (\dot A_n(t)U^{[0]}(A(t))),
\end{align*}
define  
\begin{align}\label{iteration}
w^{r,[s]}(t)&=U^{[s+1]}(A(t))\circ (\hat A_n(t)w^{r,[s-1]}(t)),\,\,\forall 1\leq s\leq r,
\end{align}
where 
\begin{align}\label{iteration:eq1}
U^{[s]}(X)&:=L^s\bigl(\hat X_nu^{[\ell-1]}(X),\ldots,\hat X_nu^{[0]}(X),u^{[\ell-1]}(X),\ldots,u^{[0]}(X)\bigr),\,\,\forall 0\leq s\leq r
\end{align}
and 
\begin{align}
\begin{split}\label{iteration:eq2}
U^{[r+1]}(X)&:=\partial_{y_\ell}\phi\bigl(u^{[k]}(X),\ldots, u^{[0]}(X)\bigr)\\
&\qquad \circ L^{r+1}\bigl(\hat X_nu^{[\ell-1]}(X),\ldots,\hat X_nu^{[1]}(X),u^{[\ell-1]}(X),\ldots,u^{[0]}(X)\bigr).
\end{split}
\end{align}
Finally, set $w^r(t)=w^{r,[r]}(t).$

\subsection{Bounding the derivative of the interpolation}\label{sec6.3}

For $r\geq 0$, from the iteration \eqref{iteration} and expanding the Hadamard product,
\begin{align}\label{eq-1}
&\frac{1}{n}\sum_{i=1}^n\tilde\e w_{i}^{r}(t)=\frac{1}{n^{1+(r+1)/2}}\sum_{I\in \mathcal{I}_{r}}\tilde\e V_I(t)
\end{align}
for
\begin{align}
\begin{split}\label{V}
V_I(t)&:=\Bigl(\prod_{l=1}^{r} U_{i_{l+1}}^{[l+1]}(A(t))\bigl(a_{i_{l+1},i_{l}}(t)+ n^{-1/2}z_{i_{l+1}, i_l}\bigr)\Bigr)U_{i_1}^{[1]}(A(t))\dot a_{i_1,i_0}(t)U_{i_0}^{[0]}(A(t))\\
&=\Bigl(\prod_{l=0}^{r+1} U_{i_{l}}^{[l]}(A(t))\Bigr) \Bigl(\prod_{l=1}^r\bigl(a_{i_{l+1},i_{l}}(t)+ n^{-1/2}z_{i_{l+1}, i_l}\bigr)\Bigr)\dot a_{i_1,i_0}(t),
\end{split}
\end{align}
where $\mathcal{I}_{r}$ is the collection of all $I=(i_0,i_1,\ldots,i_{r},i_{r+1})\in [n]^{r+2}$ with $i_0\neq i_1\neq \cdots \neq i_r\neq i_{r+1}.$ Here we view each $I$ as a directed graph of length $r+1$ with vertices $(i_s)_{0\leq s\leq r+1}$ and edges $e_I(l)=(i_l,i_{l+1})$ for $0\leq l\leq r.$ For any $I\in \mathcal{I}_r$, disregard the direction, let $\Lambda_I^0$ be the collection of all $0\leq l\leq r$ with $e_I(l)=e_I(0)$. Let $\mathcal{I}_{r}(s)$ be the collection of all graphs in $\mathcal{I}_r$ so that disregard the direction there are exactly $s$ many edges that appear once. 

\begin{proposition}\label{prop3}
	For any $I\in \mathcal{I}_r(s)$, we have that for any $0<t<1,$
	\begin{align*}
	\int_0^1\e\bigl|\tilde\e V_I(t)\bigr|dt&\leq \left\{
	\begin{array}{ll}
	\frac{C_s}{n^{(s+1)/2}},&\mbox{if $|\Lambda_I^0| \leq 2$},\\
	\\
	\frac{C_s}{n^{s/2}},&\mbox{if $|\Lambda_I^0|\geq 3$},
	\end{array}\right.
	\end{align*}
	where $\tilde\e$ is the expectation conditionally on $u^0,Z$ and $C_s$ is a universal constant independent of $n$.
\end{proposition}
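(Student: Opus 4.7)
The strategy is to combine a Gaussian integration by parts (IBP) along the interpolation $A(t)$ with the moment bounds of Proposition~\ref{prop1}. First, I will decompose $\dot a_{i_1,i_0}(t) = a_{i_1,i_0}/(2\sqrt{t}) - g_{i_1,i_0}/(2\sqrt{1-t})$, so that $V_I(t)$ becomes a sum of products of the $U^{[l]}_{i_l}(A(t))$, powers $\prod_e (a_e(t)+n^{-1/2}z_e)^{N_e^*}$, and a single occurrence of $a_{i_1,i_0}/\sqrt{t}$ or $g_{i_1,i_0}/\sqrt{1-t}$. Next, for each of the $s$ singly-appearing edges I apply Gaussian IBP: the exact identity on a $g$-entry and the approximate Lindeberg-style identity (from the Appendix) on an $a$-entry. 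Each such IBP converts its edge factor into $\partial_{a_e(t)}$ applied to the remainder; since $e$ is a single edge, the resulting derivative can only land on a $U^{[l]}$-factor, and Proposition~\ref{prop1} delivers a saving of $n^{-1/2}$. Composed over the $s$ single edges, this produces the factor $n^{-s/2}$.

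\textbf{The $\dot a$ cancellation and case split.} The key additional observation is that performing IBP on the $\dot a$-edge $e_I(0)$ produces a cancellation. Writing $a_{ij}(t) = \sqrt{t}\,a_{ij}+\sqrt{1-t}\,g_{ij}$, exact IBP on $g_{i_1,i_0}$ yields $\e[g_{i_1,i_0}(\mathrm{rest})] = \sqrt{1-t}\,\e[\partial_{a_{i_1,i_0}(t)}(\mathrm{rest})]$, and approximate IBP on $a_{i_1,i_0}$ yields $\e[a_{i_1,i_0}(\mathrm{rest})] = \sqrt{t}\,\e[\partial_{a_{i_1,i_0}(t)}(\mathrm{rest})] + R$; the prefactors $1/(2\sqrt{t})$ and $1/(2\sqrt{1-t})$ inside $\dot a$ exactly cancel the two leading terms and only the Lindeberg residual $R$ survives. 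When $|\Lambda_I^0|\le 2$, the edge $e_I(0)$ is either isolated (contributing $\dot a$ alone) or paired with exactly one copy of $a_{e_I(0)}(t)$, and in both sub-cases the cancellation is clean: the residual $R$ carries at least one extra derivative of the rest, which by Proposition~\ref{prop1} delivers an additional $n^{-1/2}$ beyond the $n^{-s/2}$ already accumulated, giving the bound $C_s/n^{(s+1)/2}$. When $|\Lambda_I^0|\ge 3$, the structure $\dot a_{e_I(0)}(t)(a_{e_I(0)}(t)+n^{-1/2}z_{e_I(0)})^{|\Lambda_I^0|-1}$ generates residual $t^{-1/2}$ and $(1-t)^{-1/2}$ singularities that are not as cleanly cancelled; I will control them directly by Cauchy--Schwarz, using the $\sigma$-subgaussian moment bounds on $a_{e_I(0)}(t)$ together with the integrability $\int_0^1 t^{-1/2}\,dt <\infty$. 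Combined with the $n^{-s/2}$ savings from the $s$ single-edge IBPs (none of which is $e_I(0)$), this yields the weaker bound $C_s/n^{s/2}$.

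\textbf{Main obstacle.} The principal technical difficulty is the careful bookkeeping of the cascade of residuals produced by the approximate Gaussian IBP. Each IBP step spawns a hierarchy of error terms with different derivative counts and different moments of $a_{ij}$, and I must ensure that each such term is absorbed into the constant $C_s$ with the advertised power of $n$. In particular, the $t^{-1/2}$ and $(1-t)^{-1/2}$ singularities originating from $\dot a$ must be either exactly cancelled (in the clean $|\Lambda_I^0|\le 2$ regime) or tamed against the integrable weight on $[0,1]$ (in the $|\Lambda_I^0|\ge 3$ regime), while simultaneously every partial derivative acting on a $U^{[l]}$-factor is matched with the $n^{-1/2}$ saving promised by Proposition~\ref{prop1}. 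The approximate Gaussian IBP identities collected in the Appendix of the paper are precisely what will make this accounting tractable.
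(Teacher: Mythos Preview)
Your proposal is correct in spirit and very close to the paper's proof. Both arguments rest on (i) integration by parts along the singly-appearing edges, (ii) the cancellation $\e[a_e(t)\dot a_e(t)]=0$ at the $\dot a$-edge, and (iii) Proposition~\ref{prop1} to convert each derivative of a $U^{[l]}$-factor into an $n^{-1/2}$ saving. There are, however, two points where your write-up diverges from the paper and one place where your bookkeeping slips.

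\textbf{Packaging.} The paper does not iterate IBP edge by edge. Instead it first expands $\prod_{l=1}^r(a_{e_I(l)}(t)+n^{-1/2}z_{e_I(l)})$ over subsets $R\subseteq[r]$, which cleanly separates the $z$-contribution $n^{-|R^c|/2}$ from the $a(t)$-contribution. Then, for each $R$, it applies a \emph{single} Taylor expansion of $U_I$ in all the variables $\{a_{e_I(l)}(t):l\in\Lambda_I^1(R)\cup\{0\}\}$ simultaneously (Lemmas~\ref{lem3}, \ref{lem4}, \ref{lem6} in the Appendix). The order $s_0$ of this expansion is $|\Lambda_I^1(R)|+2$, $|\Lambda_I^1(R)|+1$, or $|\Lambda_I^1(R)|$ according as $|\Lambda_I^0(R)|=0,1,\ge 2$, and one then checks $|R^c|+s_0\ge s+1$ (resp.\ $\ge s$) exactly when $|\Lambda_I^0|\le 2$ (resp.\ $\ge 3$). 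This one-shot Taylor approach avoids the cascade of residuals your iterated scheme would generate; the two are equivalent in principle, but the paper's version keeps the accounting compact.

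\textbf{A counting slip when $|\Lambda_I^0|=1$.} In this case $e_I(0)$ is itself one of the $s$ single edges, so your first step (IBP on edges carrying a factor $a_e(t)$) handles only the other $s-1$ of them and yields $n^{-(s-1)/2}$, not $n^{-s/2}$. To reach $n^{-(s+1)/2}$ you therefore need the $\dot a$-cancellation to supply \emph{two} derivatives, not one. It does: since both $\e[\dot a_{e_I(0)}(t)]=0$ and $\e[a_{e_I(0)}(t)\dot a_{e_I(0)}(t)]=0$, the first surviving term in the Taylor expansion of the remainder is second order---this is precisely Lemma~\ref{lem6}, which expands to order $|\Lambda_I^1(R)|+2$. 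Your phrase ``at least one extra derivative'' is correct for $|\Lambda_I^0|=2$ (Lemma~\ref{lem3}) but undersells the isolated case; once you record two derivatives there, the count closes.
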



To prove this proposition, we first establish a key lemma. For any $0\leq b\leq r+1,$ let $\mathcal{I}_{r}(s,b)$ be the collection of all $I\in \mathcal{I}_r(s)$ with $|\Lambda_I^0|=b.$ Note that when $b=1,$ the set $\mathcal{I}_{r}(s,b)$ is nonempty for all $1\leq s\leq r+1$ and when $2\leq b\leq r+1,$ the set $\mathcal{I}_{r}(s,b)$ is nonempty only if $0\leq s\leq r+1-b.$

\begin{lemma}\label{lem5}
	If $b=1,$ then for any $1\leq s\leq r+1,$
	\begin{align}\label{lem5:eq1}
	|\mathcal{I}_{r}(s,1)|&\leq  C_{r,1,s}n^{\lfloor\frac{r+1-s}{2}\rfloor +s+1}.
	\end{align}
	If $2\leq b\leq r+1,$ then for any $0\leq s\leq r+1-b,$
	\begin{align}\label{lem5:eq2}
	|\mathcal{I}_{r}(s,b)|\leq C_{r,b,s}n^{\lfloor \frac{r+1-b-s}{2}\rfloor +s+2}.
	\end{align}
	Here, $C_{r,b,s}$'s are universal constants independent of $n.$ 
\end{lemma}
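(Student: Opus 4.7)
The plan is to view each $I = (i_0, i_1, \ldots, i_{r+1}) \in \mathcal{I}_r$ as a connected walk of length $r+1$ on the complete graph with vertex set $[n]$, whose undirected edges are $e_I(l) = \{i_l, i_{l+1}\}$ for $0 \le l \le r$. Let $V(I)$ and $E(I)$ denote the number of distinct vertices and distinct undirected edges used by the walk. The key input is the classical spanning-tree inequality
\[
V(I) \;\leq\; E(I) + 1,
\]
which holds because the underlying multigraph is connected and any connected (multi)graph on $V$ vertices has at least $V-1$ distinct edges. Since each $I$ is determined by the labelled vertex set together with a choice of walk on the induced multigraph, and since the number of such walks is at most $(r+1)! \, 2^{r+1}$ (orderings and directions of $r+1$ edges), I would obtain $|\mathcal{I}_r(s,b)| \leq c_r \, n^{V_{\max}}$, where $V_{\max}$ is the maximum of $V(I)$ over $I \in \mathcal{I}_r(s,b)$ and $c_r$ depends only on $r$.

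For the case $b = 1$, the first edge $e_I(0)$ is itself one of the $s$ edges appearing exactly once, while the remaining $r+1-s$ edges each have multiplicity at least two, and hence contribute at most $\lfloor (r+1-s)/2 \rfloor$ further distinct undirected edges. This gives
\[
E(I) \;\leq\; s + \Bigl\lfloor \tfrac{r+1-s}{2} \Bigr\rfloor \qquad \text{and hence} \qquad V(I) \;\leq\; s + \Bigl\lfloor \tfrac{r+1-s}{2} \Bigr\rfloor + 1,
\]
which, combined with the factor $c_r$ of walk-realizations per labelling, produces \eqref{lem5:eq1}.

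For $b \geq 2$, the undirected edge $e_I(0)$ occupies $b$ of the $r+1$ edge-slots and still contributes just a single distinct edge. Among the remaining $r+1-b$ edges, $s$ appear once and $r+1-b-s$ have multiplicity at least two, contributing at most $s + \lfloor (r+1-b-s)/2 \rfloor$ additional distinct edges. Thus
\[
E(I) \;\leq\; 1 + s + \Bigl\lfloor \tfrac{r+1-b-s}{2} \Bigr\rfloor \qquad \text{and} \qquad V(I) \;\leq\; s + \Bigl\lfloor \tfrac{r+1-b-s}{2} \Bigr\rfloor + 2,
\]
which delivers \eqref{lem5:eq2}. The only mildly subtle point, which I would spell out carefully, is the spanning-tree inequality in the presence of multi-edges and the fact that a walk with repeated edges may form cycles that make the bound non-tight; this is where the tightness is actually given up, but I only need an upper bound. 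Once that is in hand, the lemma reduces to the elementary edge-counting above, and I anticipate no serious obstacle.
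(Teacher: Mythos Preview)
Your proposal is correct and follows essentially the same approach as the paper: pass to the underlying undirected simple graph $I'$, bound the number of distinct edges by separating singletons from repeated edges (with the special role of $e_I(0)$ when $b\ge 2$), apply the connectedness bound $V(I)\le E(I)+1$, and then note that the number of walks $I$ realizing a fixed labelled vertex set is bounded by a constant depending only on $r$. The paper makes the last constant more explicit via partition counts, whereas your $(r+1)!\,2^{r+1}$ heuristic is a bit loose as stated (edges may repeat, so ``orderings and directions of edges'' undercounts), but any crude bound such as $(r+2)^{r+2}$ for walks of length $r+1$ on at most $r+2$ labelled vertices suffices and does not affect the exponent of $n$.
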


\begin{proof}
	For any graph $I\in \mathcal{I}_{r}(s,1)$, let $I'$ be the graph, in which we disregard both multiplicities and directions of the edges. See Figure~\ref{fig:images} for examples. Observe that there are at most $\lfloor (r+1-s)/2\rfloor$ many edges in $I'$ that appear at least twice in $I$ and the total number of edges of $I'$ is at most $\lfloor (r+1-s)/2\rfloor+s.$ This implies that the total number of vertices of $I'$ should be at most $\lfloor (r+1-s)/2\rfloor+s+1$ so there are at most $n^{\lfloor\frac{r+1-s}{2}\rfloor +s+1}$ many such $I'$. 
	
	Since different $I$ can correspond to the same $I'$ (again we refer the reader to Figure~\ref{fig:images} for examples), it remains to show that for each $I'$, there are at most a constant multiple (independent of $n$) of many different $I$ that corresponds to $I'$. First fix such a possible $I'$ and write $E(I')$ for the edge set of $I'$. Each edge of $I'$ may correspond to an edge of multiplicity $1$ in $I$ or to an edge of multiplicity at least $2$ in $I$ (ignoring directions). We choose $s$ edges in $I'$ so that they correspond to the multiplicity $1$ edges in $I$. There are ${|E(I')| \choose s}$ ways to choose such $s$ edges. Now, for the remaining $(|E(I')| - s)$ many edges, the multiplicities are at least $2$ in $I$ and they add up to $r+1-s$. To count how many possibilities there are, it is equivalent to find how many ways $r+1-s$ can be written as sum of $(|E(I')| - s)$ many integers which are at least $2$, which in turn is bounded above by the number of ways to partition the integer $r+1-s$ as sum of $(|E(I')|' - s)$ many positive integers, and it is well-known that the number of ways is ${r-s\choose |E(I')| - s - 1}$. Moreover, each edge has two possible directions in $I$. Finally, each vertex in $I'$ can correspond to several $i_t$ ($0\leq t\leq r+1$) in $I$ (see Figure~\ref{fig:images}). As there are at most $\lfloor (r+1-s)/2\rfloor+s+1$ vertices in $I'$, and each vertex can correspond to at most $r+1$ many $i_t$'s, there are at most $(r+1)^{\lfloor (r+1-s)/2\rfloor+s+1}$ many such correspondence. Therefore, the total number of $I$ that an $I'$ can correspond to is bounded above by
	\[
	(r+1)^{\lfloor (r+1-s)/2\rfloor+s+1}2^{{|E(I')| \choose s} \cdot {r-s \choose |E(I')| - s - 1}} \leq (r+1)^{\lfloor (r+1-s)/2\rfloor+s+1}2^{2^{\lfloor (r+1-s)/2\rfloor}\cdot 2^{r-s}} =: C_{r,1,s}.
	\] 
	This proves \eqref{lem5:eq1}. 
	
	To prove \eqref{lem5:eq2}, note that in this case, the edge $e_I(0)$ has multiplicity $b$, and hence there are at most $\lfloor (r+1-b-s)/2\rfloor+1$ many edges in $I'$ that appear at least twice in $I$. Here, the latest $+1$ comes from $e_I(0)$. The remaining of the proof is similar to that of \eqref{lem5:eq1}, and we omit the detail.
\end{proof}

\begin{figure}[h!]
	\begin{subfigure}{0.50\textwidth}
		\centering
		\includegraphics[trim={0 15cm 0 15cm},clip,width=1\linewidth]{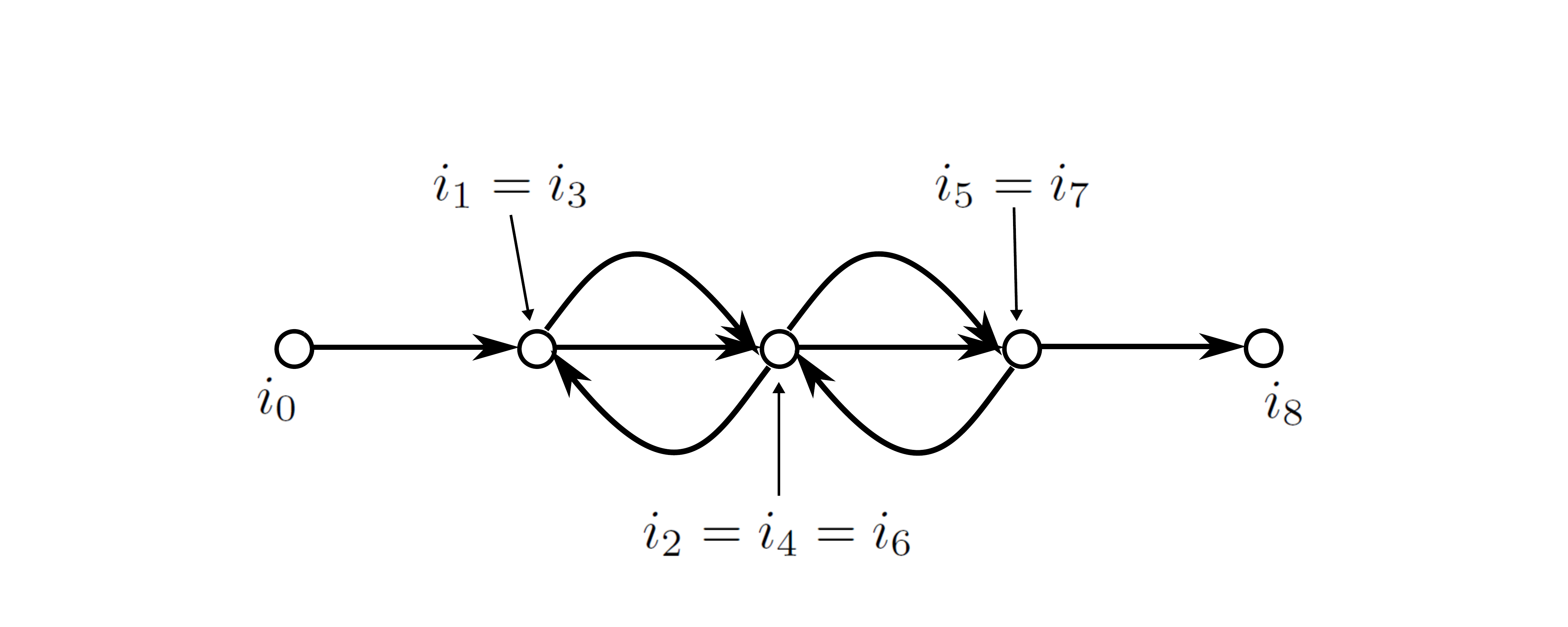} 
		\caption{One example of $I\in \mathcal{I}_{r}(s,1)$}
		\label{fig:I1}
	\end{subfigure}
	~
	\begin{subfigure}{0.50\textwidth}
		\centering
		\includegraphics[trim={0 15cm 0 15cm},clip,width=1\linewidth]{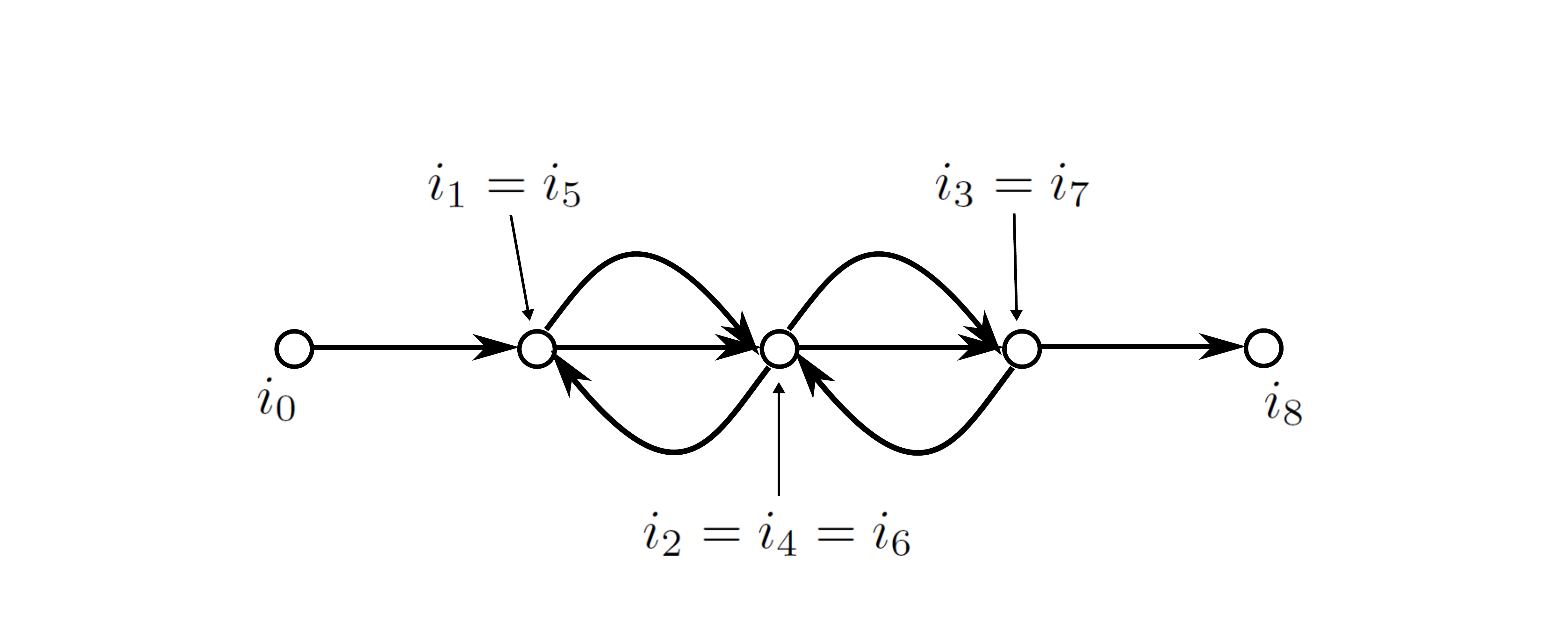}
		\caption{Another example of $I\in \mathcal{I}_{r}(s,1)$}
		\label{fig:I2}
	\end{subfigure}
	~
	\begin{center}
		\begin{subfigure}{0.50\textwidth}
			\centering
			\includegraphics[trim={0 15cm 0 15cm},clip,width=1\linewidth]{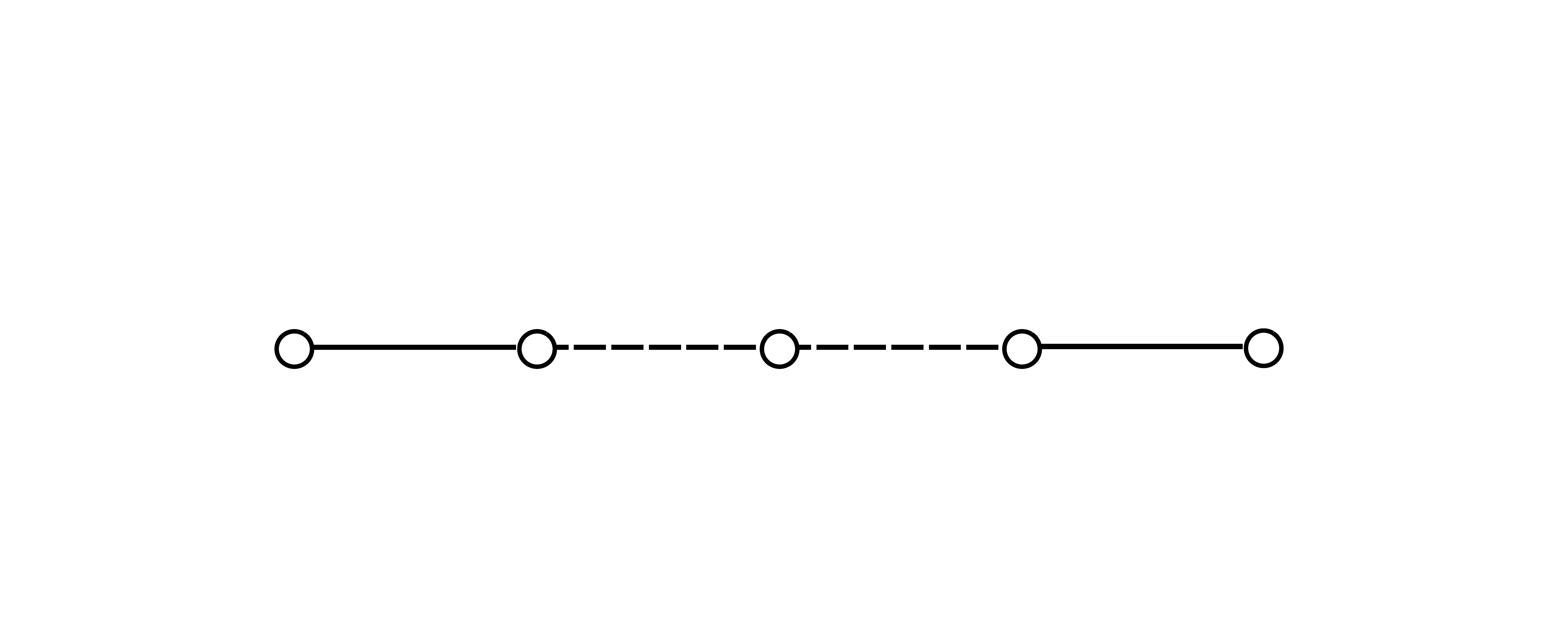}
			\caption{The corresponding undirected graph $I'$}
			\label{fig:I'}
		\end{subfigure}
	\end{center}
	
	\caption{\small (\subref{fig:I1}) and (\subref{fig:I2}) are two different directed graphs in $\mathcal{I}_{r}(s,1)$ for $r=7$ and $s=2$. After we disregard the multiplicities and directions, they correspond to the same $I'$ as shown in (\subref{fig:I'}), where the solid edges correspond to the edges in $I$ that appear only once and the dashed edges correspond to those in $I$ with multiplicity $\geq 2$. }
	\label{fig:images}
\end{figure}


\begin{proof}[\bf Proof of Proposition \ref{prop3}]
	Let $I\in \mathcal{I}_r(s)$ be fixed. Disregard the direction, let $\Lambda_I^1$ be the collection of all $l\notin \Lambda_I^0$ so that $e_I(l)$ appears exactly once in $I$ and $\Lambda_I^2$  be the collection of all $l\notin \Lambda_{I}^0$ so that $e_I(l)$ appears more than once in $I.$ In addition, for any $R\subseteq[r]$, set $\Lambda_I^0(R)=\Lambda_I^0\cap R$, $\Lambda_I^1(R)=\Lambda_I^1\cap R$, and $\Lambda_I^2(R)=\Lambda_I^2\cap R$. Let $\Lambda_I(R):=\Lambda_I^1(R)\cup\{0\}.$ From these and \eqref{V}, after expanding
	$$
	\prod_{l=1}^r\bigl(a_{i_{l+1},i_{l}}(t)+ n^{-1/2}z_{i_{l+1},i_l}\bigr),
	$$
	we can write
	\begin{align*}
	\tilde\e V_I(t)&=\sum_{R\subseteq[r]}\frac{Z_{I,R}}{n^{|R^c|/2}}\tilde\e\Bigl[ U_{I}(A(t))A_{I,R}(t)\Bigl(a_{e_I(0)}(t)^{|\Lambda_I^0(R)|}\dot a_{e_I(0)}(t)\Bigr)\Bigl(\prod_{l\in  \Lambda_I^1(R)}a_{e_I(l)}(t)\Bigr)\Bigr]
	\end{align*}
	for
	\begin{align*}
	U_I(X)&:=\prod_{l=0}^{r+1}U_{i_l}^{[l]}(X),\,\,A_{I,R}(t):=\prod_{l\in \Lambda_I^2(R)}a_{e_I(l)}(t),\,\,
	Z_{I,R}:=\prod_{l\in R^c}z_{i_{l+1}, i_l}.
	\end{align*}
	Note that inside the expectation, the two parentheses are independent of $A_{I,R}(t)$, and each term in the second parentheses appears only once in $I$. From these, we can apply Proposition~\ref{prop1} and Lemmas~\ref{lem3}, \ref{lem4}, and \ref{lem6} in Appendix to control $V_I(t)$. To see this, note that for any $0<t<1$ and $p\geq 1,$
	\begin{align}\label{add:proof:eq1}
	\bigl(\e |a_{e_I(0)}(t)|^p\bigr)^{1/p}\leq C_p
	\end{align}
	and
	\begin{align*}
	\bigl(\e |\dot a_{e_I(0)}(t)|^p\bigr)^{1/p}&\leq C_p\Bigl(\frac{1}{\sqrt{t}}+\frac{1}{\sqrt{1-t}}\Bigr)
	\end{align*}
	for some universal constant $C_p$ independent of $t$.
	Let $\e_{I,R}$ be the expectation only with respect to $a_{e_I(l)}(t)$ for all $l\in \Lambda_I(R)$. Using these bounds and Lemmas~\ref{lem3}, \ref{lem4}, and \ref{lem6}, we get that
	\begin{align*}
	\Bigl|\e_{I,R}\Bigl[U_{I}(A(t))\Bigl(a_{e_I(0)}(t)^{|\Lambda_I^0(R)|}\dot a_{e_I(0)}(t)\Bigr)\Bigl(\prod_{l\in \Lambda_I^1(R)}a_{e_I(l)}(t)\Bigr)\Bigr)\Bigr]\Bigr|
	\end{align*}
	is bounded above, up to an absolute constant independent of $I$ and $n$, by
	\begin{align*}
	\Bigl(\frac{1}{\sqrt{t}}+\frac{1}{\sqrt{1-t}}\Bigr)\sum_{|\alpha|=s_0}\Bigl(\int_0^1 \e_{I,R}\bigl[\bigr|\partial_{I,R}^{\alpha} U_{I}(A(t,\xi)))\bigr|^2\bigr]d\xi\Bigr)^{1/2},
	\end{align*}
	where $A(t,\xi)=(a_{ii'}(t,\xi))_{i,i'\in [n]}$ is defined as $a_{ii'}(t,\xi)=\xi a_{ii'}(t)$ for all $i,i'\in [n]$ satisfying $(i,i')=e_{I}(l)$ or $(i',i)=e_I(l)$ for some $l\in \Lambda_I(R)$ and $a_{ii'}(t,\xi)=a_{ii'}(t)$ otherwise.   
	Here, 
	\begin{align}\label{add:eq4}
	s_0=\left\{
	\begin{array}{ll}
	|\Lambda^1_I(R)|+2,&\mbox{if $|\Lambda_I^0(R)|=0$ by \eqref{lem3:eq3}},\\
	|\Lambda^1_I(R)|+1,&\mbox{if $|\Lambda_I^0(R)|=1$ by \eqref{lem3:eq1}},\\
	|\Lambda^1_I(R)|,&\mbox{if $|\Lambda_I^0(R)|\geq 2$ by \eqref{lem3:eq2}}.
	\end{array}
	\right.
	\end{align}
	The summand in the above bound is over all $\alpha:=(\alpha_l)_{l\in \Lambda_I(R)}\in (\{0\}\cup \mathbb{N})^{s_0}$, $|\alpha|:=\sum_{l\in \Lambda_I(R)}\alpha_l$, and $\partial_{I,R}^{\alpha}$ is the partial derivative with respect to $x_{e_I(l)}$ of order $\alpha_l$ for all $l\in \Lambda_I(R).$ From this inequality, it follows that
	\begin{align}
	\begin{split}\notag
	&\int_0^1\Bigl|\e\Bigl[ U_{I}(A(t))A_{I,R}(t)\Bigl(a_{e_I(0)}(t)^{|\Lambda_I^0(R)|}\dot a_{e_I(0)}(t)\Bigr)\Bigl(\prod_{l\in  \Lambda^1_I(R)}a_{e_I(l)}(t)\Bigr)\Bigr]\Bigr|dt\\
	&\leq C\sum_{|\alpha|=s_0}\int_0^1\e\Bigl[|A_{I,R}(t)|\Bigl(\int_0^1 \e_{I,R}\bigl[\bigr|\partial_{I,R}^\alpha U_{I}(A(t,\xi)))\bigr|^2\bigr]d\xi\Bigr)^{1/2}\Bigr]dt\\
	&\leq C\int_0^1\Bigl(\e A_{I,R}(t)^{2}\Bigr)^{1/2}\sum_{|\alpha|=s_0}\Bigl(\int_0^1 \e\bigr|\partial_{I,R}^\alpha U_{I}(A(t,\xi)))\bigr|^2d\xi \Bigr)^{1/2}dt
	\end{split}\\
	\begin{split}\label{add:eq3}
	&\leq C'\int_0^1\sum_{|\alpha|=s_0}\Bigl(\int_0^1 \e\bigr|\partial_{I,R}^\alpha U_{I}(A(t,\xi)))\bigr|^2d\xi \Bigr)^{1/2}dt,
	\end{split}
	\end{align}
	for some constants $C$ and $C'$ independent of $I$ and $n,$ where the second inequality used the Cauchy-Schwarz inequality, while the third inequality used the H\"older inequality and the bounds \eqref{add:eq--1} and \eqref{add:proof:eq1}. The last inequality can further be  controlled as follows.	
	From the product rule,
	\begin{align*}
	\partial_{I,R}^{\alpha} U_{I}&=\sum_{\beta_l:l\in \Lambda_I(R)}\Bigl(\prod_{l\in \Lambda_I(R)}{\alpha_l\choose \beta_l}\Bigr)\Bigl(\prod_{\ell=1}^{r+1}\partial_{I}^{\beta(\ell)}U_{i_{\ell}}^{[\ell]}\Bigr),
	\end{align*}
	where the summand is over all $\beta_l\in (\{0\}\cup\mathbb{N})^{r+2}$  satisfying that $\sum_{\ell=0}^{r+1}\beta_{l,\ell}=\alpha_l$ for $l\in \Lambda_I(R)$ and $\partial_I^{\beta(\ell)}$ is the partial derivative $\partial_{x_{e_I(l)}}^{\beta_{l,\ell}}$ for all $l\in \Lambda_I(R).$ Now, using the Minkowski and H\"older inequalities leads to
	\begin{align*}
	&\bigl(\e\bigr|\partial_{I,R}^{\alpha} U_{I}( A(t,\xi)))\bigr|^2\bigr)^{1/2}\\
	&\leq \sum_{\beta_l:l\in \Lambda_I(R)}\Bigl(\prod_{l\in \Lambda_I(R)}{\alpha_l\choose \beta_l}\Bigr)\prod_{\ell=1}^{r+1}\Bigl(\e\bigl|\partial_{I,R}^{\beta(\ell)}U_{i_{\ell}}^{[\ell]}(A(t,\xi))\bigr|^{2(r+2)}\Bigr)^{1/2(r+2)}.
	\end{align*}
	From \eqref{iteration:eq1} and \eqref{iteration:eq2}, note  that any nonzero-order partial derivatives of $U^{[\ell]}$'s are uniformly bounded. From Proposition \ref{prop1}, each term on the right-hand side is bounded by
	\begin{align*}
	\Bigl(\e\bigl|\partial_{I,R}^{\beta(\ell)}U_{i_{\ell}}^{[\ell]}(A(t,\xi))\bigr|^{2(r+2)}\Bigr)^{1/2(r+2)}&\leq \frac{\Gamma_\ell}{n^{\sum_{l\in \Lambda_I(R)}\beta_{l,\ell}/2}},
	\end{align*}
	where $\Gamma_\ell$ is a constant independent of $n.$ Consequently,
	\begin{align*}
	\bigl(\e\bigr|\partial_{I,R}^{\alpha} U_{I}(A(t,\xi)))\bigr|^2\bigr)^{1/2}&\leq \sum_{\beta_l:l\in \Lambda_I(R) }\Bigl(\prod_{l\in \Lambda_I(R) }{\alpha_l\choose \beta_l}\Bigr)\frac{\prod_{\ell=0}^{r+1}\Gamma_\ell}{n^{\sum_{\ell=0}^{r+1}\sum_{l\in \Lambda_I(R) }\beta_{l,\ell}/2}}\\
	&=\sum_{\beta_l:l\in \Lambda_I(R) }\Bigl(\prod_{l\in \Lambda_I(R) }{\alpha_l\choose \beta_l}\Bigr)\frac{\prod_{\ell=0}^{r+1}\Gamma_\ell}{n^{\sum_{l\in \Lambda_I(R) }\alpha_l/2}}\\
	&=\frac{1}{n^{s_0/2}}\sum_{\beta_l:l\in \Lambda_I(R) }\Bigl(\prod_{l\in \Lambda_I(R) }{\alpha_l\choose \beta_l}\Bigr)\prod_{\ell=0}^{r+1}\Gamma_\ell.
	\end{align*}
	Plugging this inequality into \eqref{add:eq3} and noting that $Z$ is independent of $A,G$ together with the bound \eqref{add:eq--1} yield that
	\begin{align*}
	\frac{1}{n^{|R^c|/2}}\int_0^1\Bigl|\e\Bigl[ U_{I}(A(t))Z_{I,R}A_{I,R}(t)\Bigl(a_{e_I(0)}(t)^{|\Lambda_I^0(R)|}\dot a_{e_I(0)}(t)\Bigr)\Bigl(\prod_{l\in  \Lambda_I^1(R)}a_{e_I(l)}(t)\Bigr)\Bigr]\Bigr|dt\leq \frac{C''}{n^{|R^c|/2+s_0/2}}.
	\end{align*}
	Here, note that 
	\begin{align*}
	|R^c|+|\Lambda_I^1(R)|&=\bigl(|\Lambda_I^0(R^c)|+|\Lambda_I^1(R^c)|+|\Lambda_I^2(R^c)|\bigr)+|\Lambda_I^1(R)|\\
	&=|\Lambda_I^0(R^c)|+|\Lambda_I^1([r])|+|\Lambda_I^2(R^c)|\\
	&\geq |\Lambda_I^1([r])|.
	\end{align*}
	Also, note that $s$ is the number of edges in $I$ that are crossed once disregard the direction. This implies that $\Lambda_I^1([r])\geq s-1$ and that $\Lambda_I^1([r])=s$ if $|\Lambda_I^0(R)|\geq 1$ since $|\Lambda_I^0|\geq 1+|\Lambda_I^0(R)|\geq 2.$ Recall \eqref{add:eq4}. If $|\Lambda_I^0(R)|=0,$ then 
	$$
	|R^c|+s_0=|R^c|+|\Lambda^1_I(R)|+2  \geq (s-1)+2=s+1;
	$$ if $|\Lambda_I^0(R)|=1,$ then
	$$
	|R^c|+s_0=|R^c|+|\Lambda^1_I(R)|+1\geq s+1;
	$$
	if $|\Lambda_I^0(R)|\geq 2,$ then
	$$
	|R^c|+s_0=|R^c|+|\Lambda^1_I(R)|\geq s.
	$$
	From these, if $|\Lambda_I^0|\leq 2,$ then $|\Lambda_I^0(R)|$ can only be $0$ or $1$ for any $R\subseteq [r]$ and this implies that
	\begin{align*}
	\int_0^1\e\bigl|\tilde\e V_I(t)\bigr|dt\leq \frac{C_s}{n^{(s+1)/2}}
	\end{align*}
	and if $|\Lambda_I^0|\geq 3,$ then $|\Lambda_I^0(R)|$ could be larger than $2$ for some $R\subseteq [r]$ and hence,
		\begin{align*}
	\int_0^1\e\bigl|\tilde\e V_I(t)\bigr|dt\leq \frac{C_s}{n^{s/2}}
	\end{align*}
	for some constant $C_s>0.$ This completes our proof.
	
\end{proof}

\subsection{Proof of Proposition \ref{thm1}: first moment}\label{sec6.4}

Recall \eqref{eq-1}. Our proof will be completed once we establish that
\begin{align}\label{thm1:proof:eq1}
\frac{1}{n^{1+(r+1)/2}}\sum_{I\in \mathcal{I}_{r}}\int_0^1\e|\tilde\e V_I(t)|dt\leq \frac{C}{n^{1/2}},
\end{align}  
where $C$ is an absolute constant independent of $n.$ Recall from the definition of $\mathcal{T}_r(s,b)$ that when $b=1,$ $\mathcal{I}_r(0,b)=\emptyset$ and $\mathcal{I}_{r}(s,b)\neq \emptyset$ for all $1\leq s\leq r+1$ and that  the set $\mathcal{I}_{r}(s,b)$ is nonempty only if $0\leq s\leq r+1-b.$  From these,
\begin{align}
\begin{split}\label{thm1:proof:eq0}
\sum_{I\in \mathcal{I}_{r}}\tilde\e V_I(t)&=\sum_{s=0}^{r+1}\sum_{I\in \mathcal{I}_{r}(s)}\tilde\e V_I(t)\\
&=\sum_{s=1}^{r+1}\sum_{I\in \mathcal{I}_{r}(s,1)}\tilde\e V_I(t)+\sum_{s=1}^{r-1}\sum_{I\in \mathcal{I}_{r}(s,2)}\tilde\e V_I(t)+\sum_{b=3}^{r+1}\sum_{s=0}^{r+1-b}\sum_{I\in \mathcal{I}_{r}(s,b)}\tilde\e V_I(t).
\end{split}
\end{align}
In what follows, we let $C_{r,b,s}$ and $C_{r,b,s}'$ be absolute constants independent of $n.$
Here, from the first case of Proposition \ref{prop3} and Lemma \ref{lem5}, the first two summations can be controlled by 
\begin{align*}
\frac{1}{n^{1+(r+1)/2}}\sum_{I\in \mathcal{I}_{r}(s,1)}\int_0^1\e|\tilde\e V_I(t)|dt&\leq \frac{1}{n^{1+(r+1)/2}}\cdot C_{r,1,s}n^{\lfloor \frac{r+1-s}{2}\rfloor +s+1}\cdot\frac{C_{r,1,s}'}{n^{(s+1)/2}}\\
&=C_{r,1,s}C_{r,1,s}'n^{\lfloor \frac{r+1-s}{2}\rfloor -\frac{(r+1-s)}{2}-\frac{1}{2}}\\
&\leq C_{r,1,s}C_{r,1,s}'n^{-1/2}
\end{align*}
for $1\leq s\leq r+1$
and
\begin{align*}
\frac{1}{n^{1+(r+1)/2}}\sum_{I\in \mathcal{I}_{r}(s,2)}\int_0^1\e|\tilde\e V_I(t)|dt&\leq \frac{1}{n^{1+(r+1)/2}}\cdot C_{r,2,s}n^{\lfloor \frac{r-1-s}{2}\rfloor +s+2}\cdot\frac{C_{r,2,s}'}{n^{(s+1)/2}}\\
&=C_{r,2,s}C_{r,2,s}'n^{\lfloor \frac{r-1-s}{2}\rfloor -\frac{(r-1-s)}{2}-\frac{1}{2}}\\
&\leq C_{r,2,s}C_{r,2,s}'n^{-1/2}
\end{align*}
for $0\leq s\leq r-1$. To control the second summation, from the second case of Proposition \ref{prop3} and Lemma \ref{lem5},
\begin{align*}
\frac{1}{n^{1+(r+1)/2}}\sum_{I\in \mathcal{I}_{r}(s,b)}\int_0^1\e|\tilde\e V_I(t)|dt&\leq \frac{1}{n^{1+(r+1)/2}}\cdot C_{r,b,s}n^{\lfloor\frac{r+1-b-s}{2}\rfloor +s+2}\cdot\frac{C_{r,b,s}'}{n^{s/2}}\\
&= C_{r,b,s}C_{r,b,s}'   n^{\lfloor \frac{r+1-b-s}{2}\rfloor -\frac{r+1-b-s}{2}-(\frac{b}{2}-1)}\\
&\leq C_{r,b,s}C_{r,b,s}'n^{-(\frac{b}{2}-1)}
\end{align*}
for $0\leq s\leq r+1-b.$ Plugging these into \eqref{thm1:proof:eq0} yields \eqref{thm1:proof:eq1} and this completes our proof.

\subsection{Proof of Proposition \ref{thm1}: second moment}\label{sec6.5}

Our approach is the same as that for the first moment. Set 
\begin{align*}
\Psi_{k,n}(t)&=\tilde\e \Phi_{k,n}(A(t))^2,\,\,0<t<1.
\end{align*}
Note that
\begin{align*}
\e \bigl|\tilde\e \Phi_{k,n}(A)^2-\tilde\e \Phi_{k,n}(G)^2\bigr|=\e \bigl|\Psi_{k,n}(1)-\Psi_{k,n}(0)\bigr|.
\end{align*}
To control this expectation, we again consider the derivative
\begin{align*}
\frac{d}{dt}\Psi_{k,n}(t)=2\tilde\e\Phi_{k,n}(A(t))\frac{d}{dt}\Phi_{k,n}(A(t)).
\end{align*} 
Again, our goal would be to show that
\begin{align*}
\int_0^1 \e\bigl|\tilde\e \Psi_{k,n}'(t)\bigr|dt
\end{align*}
is bounded above, up to an absolute constant, by $n^{-1/2}.$ To see this, recall from Section \ref{sec:interpolation} that
$
\Phi_{k,n}'(t)
$
can be written as a summation, in which each summand is of the form $$
\frac{1}{n}\sum_{i=1}^nw_i^r(t)
$$
for some $0\leq r\leq k-1.$ In a similar manner, from \eqref{eq-1} and \eqref{V}, $\e\Psi_{k,n}'(t)$ can also be written as a sum, in which each term is equal to 
\begin{align*}
\tilde\e\Phi_{k,n}(A(t))\Bigl(\frac{1}{n}\sum_{i=1}^nw_i^r(t)\Bigr)=\frac{1}{n^{2+(r+1)/2}}\sum_{i=1}^n\sum_{I\in \mathcal{I}_{r}}\tilde\e \overline V_{i,I}(t),
\end{align*}
where 
\begin{align*}
\overline{V}_{i,I}(t):=\tilde\e U_i^{[r+2]}(A(t))\Bigl(\prod_{l=0}^{r+1} U_{i_{l}}^{[l]}(A(t))\Bigr) \Bigl(\prod_{l=1}^r\bigl(a_{i_{l+1},i_{l}}(t)+n^{-1/2}z_{i_{l+1},i_l}\bigr)\Bigr)\dot{a}_{i_1,i_0}(t)
\end{align*}
for
\begin{align*}
U_{i}^{[r+2]}(A(t)):=\phi(u_i^{[k]}(t),\ldots,u_i^{[0]}(t)),\,\,\forall i\in [n].
\end{align*}
Here,  $\overline{V}_{i,I}(t)$ is essentially the same as $V_I(t)$ (see \eqref{V}) except that it contains one extra term $U_{i}^{[r+2]}(A(t))$. In view of the proof of Proposition~\ref{prop3}, an identical argument implies that for any $i\in [n]$ and $I\in \mathcal{I}_r(s)$, 
\begin{align*}
\int_0^1\e\bigl|\tilde\e \overline{V}_{i,I}(t)\bigr|dt&\leq \left\{
\begin{array}{ll}
\frac{C_s'}{n^{(s+1)/2}},&\mbox{if $|\Lambda_I^0|\leq 2$},\\
\\
\frac{C_s'}{n^{s/2}},&\mbox{if $|\Lambda_I^0|\geq 3$},
\end{array}\right.
\end{align*}
where $C_s'$ is a universal constant independent of $n$ and $i.$ Consequently, as in the proof of \eqref{thm1:proof:eq1}, it follows that
\begin{align*}
\frac{1}{n^{1+(r+1)/2}}\sum_{I\in \mathcal{I}_{r}}\int_0^1\e\bigl|\tilde\e \overline V_{i,I}(t)\bigr|dt\leq \frac{C}{n^{1/2}}
\end{align*}
for some constant $C$ independent of $n$ and $i$ and the same inequality remains valid after taking $n^{-1}\sum_{i=1}^n$,
\begin{align*}
\frac{2}{n^{1+(r+1)/2}}\sum_{i=1}^n\sum_{I\in \mathcal{I}_{r}}\int_0^1\e\bigl|\tilde\e \overline V_{i,I}(t)\bigr|dt\leq \frac{C}{n^{1/2}},
\end{align*}
which completes our proof.

\section{Proof of Theorem \ref{cor1}}\label{sec:sec6}

We establish the proof of Theorem~\ref{cor1}.  For notational convenience, if $a_n$ and $b_n$ are two random variables, we denote 
$$
a_n\asymp b_n
$$
if $|a_n-b_n|\to 0$ in probability; if they are $n$-dimensional random vectors, then this notation means that in probability,
$$
\lim_{n\to\infty}\frac{1}{n}\|a_n-b_n\|_2^2=0.
$$

To begin with, recall from Lemma \ref{add:lem3} and Theorem \ref{concentration} that due to the Lipschitz property of the functions $(F_{k})_{k\geq 0}$, the AMP orbit defined in Definition \ref{def1} is uniformly square-integrable and the average along the Gaussian AMP orbit is concentrated with respect to $\tilde{\e}$. Here, in the setting of Definition \ref{def2}, since both $f_{\ell}$ and its first-order derivatives are Lipschitz, an identical argument also allows to show that $(v^{[k]})_{k\geq 0}$ is uniformly squared-integrable and when $X=G,$ its average along the orbit is self-averaged with respect to $\tilde{\e}$. More precisely, for any Lipschitz $\phi \in C(\mathbb{R}^{k+1})$, 
\begin{align}\label{con}
\frac{1}{n}\sum_{i\in [n]}\phi(v^{[k]}(G),\ldots, v^{[0]}(G))\asymp \frac{1}{n}\tilde\e\sum_{i\in [n]}\phi(v^{[k]}(G),\ldots, v^{[0]}(G)).
\end{align}
Denote by $(v^{G,[k]})_{k\geq 0}$ the AMP orbit in Definition \ref{def2} with the replacement of $b_{k,j}$ by $$
b_{k,j}^G:=\frac{1}{n}\tilde\e\sum_{i\in [n]}\frac{\partial f_k}{\partial u_i^{[j]}}\bigl(v^{[k]}(G),\ldots,v^{[0]}(G)\bigr)
$$
and initialization $v^{G,[0]}=u^0.$

\begin{lemma} \label{add:LEM2}For any Lipschitz $\phi\in C(\mathbb{R}^{k+1}),$
	\begin{align*}
	\frac{1}{n}\sum_{i\in [n]}\phi\bigl(v_i^{G,[k]}(G),\ldots,v_i^{G,[0]}(G)\bigr)\asymp\frac{1}{n}\sum_{i\in [n]}\phi\bigl(v_i^{G,[k]}(A),\ldots,v_i^{G,[0]}(A)\bigr).
	\end{align*}
\end{lemma}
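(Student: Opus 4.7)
The plan is to derive Lemma \ref{add:LEM2} from Theorem \ref{thm0} by realizing $(v^{G,[k]}(X))_{k\ge 0}$ as a subsequence of a generalized AMP orbit in the sense of Definition \ref{def1}. The obstacle is a mismatch between the two definitions: the iteration in Definition \ref{def2} multiplies $\hat X_n$ against $f_k(v^{[k]},\ldots,v^{[0]})$, a function of several previous iterates, whereas Definition \ref{def1} multiplies $\hat X_n$ only against the most recent iterate $u^{[k]}$ (and does not even include $u^{[k]}$ itself among the arguments of $F_k$). I will bridge this mismatch by inflating each step of Definition \ref{def2} into three steps of Definition \ref{def1}: a ``compute $f_k$'' step, an ``apply $\hat X_n$ and subtract Onsager'' step, and a harmless placeholder step.

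Concretely, set $\tilde u^{[0]}(X):=u^0$ and $F_0(y_1):=y_1$, so that $\tilde u^{[1]}=\hat X_n u^0$ serves only as an auxiliary. For each $k\ge 1$ define
\begin{align*}
F_{3k-2}(y_*,y_{3k-3},\ldots,y_0)&:=f_{k-1}(y_{3(k-1)},y_{3(k-2)},\ldots,y_3,y_0),\\
F_{3k-1}(y_*,y_{3k-2},\ldots,y_0)&:=y_*-\sum_{j=1}^{k-1}b^G_{k-1,j}\,y_{3j-1},\\
F_{3k}(y_*,y_{3k-1},\ldots,y_0)&:=y_0.
\end{align*}
A direct induction then yields $\tilde u^{[3k-1]}=f_{k-1}(v^{G,[k-1]},\ldots,v^{G,[0]})=:w^{[k-1]}$, $\tilde u^{[3k]}=\hat X_n w^{[k-1]}-\sum_{j=1}^{k-1}b^G_{k-1,j}w^{[j-1]}=v^{G,[k]}$, and $\tilde u^{[3k+1]}=u^0$; in particular $v^{G,[j]}=\tilde u^{[3j]}$ for all $j\ge 0$. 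The definition of $F_{3k-2}$ is consistent because the indices $0,3,\ldots,3(k-1)$ of the required $v^{G,[j]}$'s all lie in $\{0,1,\ldots,3k-3\}$ and hence appear as arguments. The key point is that each $b^G_{k,j}$ is a deterministic number once $(u^0,Z)$ is fixed and satisfies $|b^G_{k,j}|\le\eta_k$ (because $f_k$ being $\eta_k$-Lipschitz forces $\|\nabla f_k\|_\infty\le\eta_k$); thus, conditionally on $(u^0,Z)$, each $F_m$ is Lipschitz with Lipschitz constant and value at the origin bounded in terms of $(\eta_j)_{j\le k}$ alone, uniformly in $n$ and in the realization of $(u^0,Z)$.

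With this embedding in place, apply Theorem \ref{thm0} conditionally on $(u^0,Z)$ with the Lipschitz test function $\tilde\phi(y_M,\ldots,y_0):=\phi(y_{3k},y_{3(k-1)},\ldots,y_3,y_0)$ (where $M:=3k$) that reads off precisely the coordinates $\tilde u^{[3j]}=v^{G,[j]}$. This gives
\[
\tfrac{1}{n}\sum_i\phi(v^{G,[k]}(A)_i,\ldots,v^{G,[0]}(A)_i)\asymp\tfrac{1}{n}\sum_i\phi(v^{G,[k]}(G)_i,\ldots,v^{G,[0]}(G)_i),
\]
which is the claim. The main obstacle to handle carefully is the conditional application of Theorem \ref{thm0}: one must verify that the quantitative bounds in Propositions \ref{prop0}, \ref{prop1}, and \ref{thm1} (together with the smooth approximation of Proposition \ref{add:prop1}) depend on the $F_m$'s only through the two quantities just shown to be bounded deterministically. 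Since those bounds are formulated purely in these terms, the conditional version of Theorem \ref{thm0} holds with rates uniform in $(u^0,Z)$, and the unconditional conclusion then follows by integration.
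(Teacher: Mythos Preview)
Your proposal is correct and follows essentially the same approach as the paper: both embed the $(v^{G,[k]})$ iteration into a generalized AMP by splitting each step of Definition \ref{def2} into three auxiliary steps of Definition \ref{def1}, then invoke Theorem \ref{thm0}. The paper's encoding uses $u^{[3\ell+1]}\equiv 0$ as the placeholder step where you use $\tilde u^{[3k+1]}=u^0$, and the paper similarly remarks that Theorem \ref{thm0} extends to $F_k$'s depending on $(u^0,Z)$ with uniformly bounded Lipschitz constants, which is exactly your point about $|b^G_{k,j}|\le\eta_k$ and the conditional application.
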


\begin{proof}
	Consider the initialization 
	\begin{align*}
	u^{[0]}(X)&=v^{[0]}(X)=u^0.
	\end{align*}
	Set
	\begin{align*}
	u^{[1]}(X)&=0,\\
	u^{[2]}(X)& =  f_0(u^{[0]}(X)),\\
	u^{[3]} (X)&= \hx_n u^{[2]} (X)= \hx_n f_0(u^{[0]}(X)) 
	\end{align*}
	and for $\ell\geq 1,$ set
	\begin{align*}
	u^{[3\ell+1]} (X)&= 0,\\
	u^{[3\ell+2]}(X) &= f_\ell(u^{[3\ell]}(X), u^{[3(\ell-1)]}(X),\ldots, u^{[3]}(X), u^{[0]}(X)),\\
	u^{[3\ell+3]}(X) &= \hx_n u^{[3\ell+2]} (X)- \sum_{j=1}^\ell b_{\ell,j}^G u^{[3j-1]}(X).
	\end{align*}
	The main feature of this construction is that $u^{[3\ell]}(X)=v^{G,[\ell]}(X)$ for all $\ell\geq 0.$ Note that $b_{\ell,j}^G$ depends only on $u^0,Z$ and it is uniformly bounded. Although Definition \ref{def1} assumes that $F_k$'s are nonrandom, with no essential changes to the proof, Theorem \ref{thm0} indeed extends to randomized $F_k$'s that are dependent only on $u^0,Z$ and the Lipschitz constants of $F_k$'s are bounded by some constants independent of $u^0,Z.$ Hence, the assertion follows by applying Theorem~\ref{thm0} to $(u^{[\ell]})_{\ell\geq 0}$ and $\phi(u^{[3k]},u^{[3(k-1)]},\ldots,u^{[0]}).$ 
\end{proof}

\begin{lemma}\label{add:LEM1}
For any $k\geq 0,$
\begin{align*}
v^{[k]}(G)\asymp v^{G,[k]}(G).
\end{align*}
\end{lemma}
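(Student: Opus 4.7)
I would argue by induction on $k$. For $k=0$ there is nothing to prove since $v^{[0]}(G) = v^{G,[0]}(G) = u^0$ by definition. Assuming that $v^{[\ell]}(G) \asymp v^{G,[\ell]}(G)$ for all $0 \leq \ell \leq k$, I would split
\begin{align*}
v^{[k+1]}(G) - v^{G,[k+1]}(G) &= \hg_n\bigl[f_k(v^{[k]}(G),\ldots,v^{[0]}(G)) - f_k(v^{G,[k]}(G),\ldots,v^{G,[0]}(G))\bigr]\\
&\quad - \sum_{j=1}^k (b_{k,j}(G) - b_{k,j}^G)\,f_{j-1}(v^{[j-1]}(G),\ldots,v^{[0]}(G))\\
&\quad - \sum_{j=1}^k b_{k,j}^G\bigl[f_{j-1}(v^{[j-1]}(G),\ldots) - f_{j-1}(v^{G,[j-1]}(G),\ldots)\bigr]
\end{align*}
and control each piece in $n^{-1/2}\|\cdot\|_2$.

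\textbf{First summand.} Since $\|\hg_n\|_2 = O_P(1)$ by Lemma~\ref{add:lem5}, and $f_k$ is Lipschitz, I can estimate
$$
\frac{1}{n}\bigl\|\hg_n\bigl[f_k(v^{[k]}(G),\ldots) - f_k(v^{G,[k]}(G),\ldots)\bigr]\bigr\|_2^2 \leq \|\hg_n\|_2^2 \cdot \frac{\mathrm{Lip}(f_k)^2}{n}\sum_{\ell=0}^k \|v^{[\ell]}(G) - v^{G,[\ell]}(G)\|_2^2,
$$
which tends to zero in probability by the inductive hypothesis.

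\textbf{Second and third summands.} The key observation is that $\partial f_k/\partial v^{[j]}$ is a Lipschitz function on $\mathbb{R}^{k+1}$ by assumption in Definition~\ref{def2}, so the concentration statement \eqref{con} applied to $\phi = \partial f_k/\partial y_j$ yields $b_{k,j}(G) \asymp b_{k,j}^G$. Combined with the uniform square-integrability of $n^{-1/2}\|f_{j-1}(v^{[j-1]}(G),\ldots)\|_2$ (which follows from Lipschitzness of $f_{j-1}$ and an analogue of Lemma~\ref{add:lem3} for the orbit $(v^{[\ell]})$), the second summand contributes $o_P(1)$ after division by $\sqrt{n}$. For the third summand, $b_{k,j}^G$ is $O_P(1)$ (it is an average of $\partial f_k/\partial v^{[j]}$ evaluated along a square-integrable orbit), and the Lipschitz property of $f_{j-1}$ together with the inductive hypothesis gives that $n^{-1/2}\|f_{j-1}(v^{[j-1]}(G),\ldots) - f_{j-1}(v^{G,[j-1]}(G),\ldots)\|_2 \to 0$ in probability.

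\textbf{Main obstacle.} The only substantive point is verifying that the concentration result \eqref{con}---which is stated in the excerpt as applicable to $(v^{[\ell]}(G))$---indeed covers the specific Lipschitz test functions $\partial f_k/\partial y_j$. This is where the hypothesis in Definition~\ref{def2} that the first-order partial derivatives of $f_k$ be Lipschitz is crucial: it guarantees both the validity of \eqref{con} in this setting and the uniform boundedness of $b_{k,j}^G$. Everything else is a routine triangle inequality combined with the inductive hypothesis and Lemma~\ref{add:lem5}.
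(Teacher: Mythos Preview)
Your proposal is correct and follows essentially the same inductive strategy as the paper's proof. The only cosmetic difference is in the grouping of the cross terms: you write $b_{k,j}(G)f_{j-1}(v)-b_{k,j}^G f_{j-1}(v^G)=(b_{k,j}(G)-b_{k,j}^G)f_{j-1}(v)+b_{k,j}^G\bigl(f_{j-1}(v)-f_{j-1}(v^G)\bigr)$, whereas the paper groups it as $b_{k,j}(G)\bigl(f_{j-1}(v)-f_{j-1}(v^G)\bigr)+(b_{k,j}(G)-b_{k,j}^G)f_{j-1}(v^G)$; both rely on the same three ingredients (Lipschitzness of $f_k$, $\|\hg_n\|_2=O_P(1)$ from Lemma~\ref{add:lem5}, and $b_{k,j}(G)\asymp b_{k,j}^G$ from~\eqref{con}), and the uniform boundedness of the $b$-coefficients follows simply from the boundedness of $\nabla f_k$.
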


\begin{proof}
We argue by induction. Obviously the assertion is valid for $k=0.$ Assume that there exists some $k'\geq 0$ such that it is also valid for all $0\leq k\leq k'.$ 
From the triangle inequality, 
\begin{align*}
&\bigl\|v^{[k'+1]} (G)- v^{G,[k'+1]}(G)\bigr\|_2 \\
\leq&\; \|\hg_n\|_2 	\bigl\|f_{k'} ({v}^{[k']}(G), \ldots, {v}^{[0 ]}(G)) - f_k (v^{G,[k']}(G), \ldots, v^{G,[0 ]}(G))\bigr\|_2 \\
&+\sum_{j=1}^{k'}  |b_{k' ,j}(G)|	\bigl\|f_{j-1}({v}^{[j-1]}(G),\ldots, {v}^{[0]}(G)) - f_{j-1}(v^{G,[j-1]}(G),\ldots, v^{G,[0]}(G))\bigr\|_2\\
&+\sum_{j=1}^{k'}  |b_{k' ,j}(G)-b_{k' ,j}^G|	\bigl\|f_{j-1}(v^{G,[j-1]}(G),\ldots, v^{G,[0]}(G))\bigr\|_2.
\end{align*}
By induction hypothesis, Lemma \ref{add:lem5}, and noting that the first-order partial derivatives of $f_j$'s are uniformly bounded, the first two terms after dividing by $\sqrt{n}$ converge to zero in probability. As for the last term, note that \eqref{con}  implies that $b_{k',j}^G\asymp b_{k',j}(G)$ for all $1\leq j\leq k'.$ Also, note that from the relation, $u^{[3\ell]}=v^{G,[\ell]}$, in the proof of Lemma \ref{add:LEM2}, the Lipschitz property of $f_{j-1}$ and Lemmas \ref{add:lem5} and \ref{add:lem3} (here, again Lemma \ref{add:lem3} is valid despite of the fact that $F_k$'s are dependent on $Z$ and $u^0$) imply 
\begin{align}
\label{add:EQ12}
\sup_{n\geq 1}\frac{1}{\sqrt{n}}\e\bigl\|f_{j-1}(v^{G,[j-1]}(G),\ldots, v^{G,[0]}(G))\bigr\|_2<\infty.
\end{align} 
Hence, the third term also vanishes in probability and this validates the announced result.
\end{proof}

We are ready to prove Theorem \ref{cor1}, namely, for any $k\geq 0$ and Lipschitz $\phi \in C(\mathbb{R}^{k+1}),$
\begin{align}\label{add:EQ11}
\frac{1}{n}\sum_{i\in [n]}\phi\bigl(v_i^{[k]}(G),\ldots,v_i^{[0]}(G)\bigr)\asymp \frac{1}{n}\sum_{i\in [n]}\phi\bigl(v_i^{[k]}(A),\ldots,v_i^{[0]}(A)\bigr).
\end{align}
We argue by induction. Evidently this is valid for $k=0$. Assume that there exists some $k'\geq 0$ such that it is also valid for all $0\leq k\leq k'.$ 
From Lemmas~\ref{add:LEM2} and \ref{add:LEM1}, 
\begin{align*}
\frac{1}{n}\sum_{i\in [n]}\phi\bigl(v_i^{[k'+1]}(G),\ldots,v_i^{[0]}(G)\bigr)&\asymp \frac{1}{n}\sum_{i\in [n]}\phi\bigl(v_i^{G,[k'+1]}(G),\ldots,v_i^{G,[0]}(G)\bigr)\\
&\asymp\frac{1}{n}\sum_{i\in [n]}\phi\bigl(v_i^{G,[k'+1]}(A),\ldots,v_i^{G,[0]}(A)\bigr).
\end{align*}
We claim that
\begin{align*}
\frac{1}{n}\sum_{i\in [n]}\phi\bigl(v_i^{G,[k'+1]}(A),\ldots,v_i^{G,[0]}(A)\bigr)\asymp \frac{1}{n}\sum_{i\in [n]}\phi\bigl(v_i^{[k'+1]}(A),\ldots,v_i^{[0]}(A)\bigr).
\end{align*}
If this is valid, then \eqref{add:EQ11} is also true for $k+1$ and this would complete our proof. It suffices to show that
\begin{align*}
v^{G,[k]}(A)\asymp v^{[k]}(A),\,\,\forall 0\leq k\leq k'+1.
\end{align*}
Easy to see that this is valid if $k=0.$ Assume that there exists some $0\leq k''\leq k'$ such that this equation holds for all $0\leq k\leq k''.$ Write
\begin{align*}
&\bigl\|v^{[k''+1]}(A)-v^{G,[k''+1]}(A)\bigr\|_2\\
&\leq \|\ha_n\|_2\bigl\|f_{k''}(v^{[k'']}(A),\ldots,v^{[0]}(A))-f_{k''}(v^{G,[k'']}(A),\ldots,v^{G,[0]}(A))\bigr\|\\
&+\sum_{j=1}^{k''} |b_{k'',j}|\bigl\|f_{j-1}(v^{[j-1]}(A),\ldots,v^{[0]}(A))-f_{j-1}(v^{G,[j-1]}(A),\ldots,v^{G,[0]}(A))\bigr\|_2\\
&+\sum_{j=1}^{k''}|b_{k'',j}(A)-b_{k'',j}^G|\bigl\|f_{j-1}(v^{G,[j-1]}(A),\ldots,v^{G,[0]}(A))\bigr\|_2.
\end{align*}
Here, from the induction hypothesis, after dividing by $\sqrt{n},$ the first two lines vanish in probability by using the fact that $b_{k'',j}$ is uniformly bounded and Lemma \ref{add:lem5}. As for the last one, write
\begin{align*}
b_{k'',j}(A)-b_{k'',j}^G=(b_{k'',j}(A)-b_{k'',j}(G))+(b_{k'',j}(G)-b_{k'',j}^G).
\end{align*}
Note that  \eqref{con} implies $b_{k'',j}^G\asymp b_{k'',j}(G)$, while the induction hypothesis of \eqref{add:EQ11} implies that $b_{k'',j}^G\asymp b_{k'',j}^A$.  Hence, $b_{k'',j}(A)\asymp b_{k'',j}^G.$
This and \eqref{add:EQ12} imply that the third line also vanishes in probability. Hence, $v^{[k''+1]}(A)\asymp v^{G,[k''+1]}(A)$ and this completes the proof of our claim.

\section{Proof of Theorem \ref{cor2}}\label{sec:sec7}

We establish the proof of Theorem \ref{cor2}. Our strategy is to approximate the principal eigenvector by the power method. In view of this, it is essentially a special case of the generalized AMP in Definition \ref{def1}. Once this is done, universality would follow by an analogous argument as that for Theorem \ref{cor2}. Again, we adapt the notation $a_n\asymp b_n$ from Section \ref{sec:sec6}.


\subsection{Power method}

The well-known power method states that if the principal eigenvalue stays a gap away from the other eigenvalues, then one can generate the principal eigenvector via an iteration procedure. 

\begin{lemma}[Power method]\label{pm}
	Let $Y\in M_n(\mathbb{R})$ and $y\in \mathbb{R}^n$ with $\|y\|_2=1.$ Let $\lambda_1\geq \cdots\geq \lambda_n$ be the eigenvalues of $Y$ satisfying $\lambda_1\geq \max_{2\leq r\leq n}|\lambda_r|$ and $y^1$ be the normalized eigenvector associated to $\lambda_1$. If $\lambda_1\neq 0$ and $y\not\perp y^1$, then for any $d\geq 1,$
	\begin{align}\label{pm:eq1}
	\Bigl\|\frac{Y^dy}{\|Y^dy\|_2}-\mathrm{sign}\bigl(\la y^1,y\ra \bigr)y^1\Bigr\|_2&\leq \frac{1}{|\la y^1,y\ra|}\max_{2\leq r\leq n}\Bigl|\frac{\lambda_r}{\lambda_1}\Bigr|^{d}.
	\end{align}
\end{lemma}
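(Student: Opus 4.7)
The plan is to reduce the claim to a short spectral calculation. Since $Y\in M_n(\mathbb{R})$ is symmetric, I will fix an orthonormal eigenbasis $y^1,\ldots,y^n$ with $Yy^r=\lambda_r y^r$ and expand the initial vector $y=\sum_{r=1}^n c_r y^r$ with $c_r=\la y^r,y\ra$. Because $\lambda_1\geq \max_{r\geq 2}|\lambda_r|\geq 0$ and $\lambda_1\neq 0$, we have $\lambda_1>0$, so $\lambda_1^d>0$ and I may set
\[
v:=\frac{Y^dy}{\lambda_1^d}=c_1 y^1+w,\qquad w:=\sum_{r=2}^n c_r\Bigl(\frac{\lambda_r}{\lambda_1}\Bigr)^{d} y^r,
\]
which will identify $Y^dy/\|Y^dy\|_2=v/\|v\|_2$. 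Orthonormality together with $\sum_r c_r^2=\|y\|_2^2=1$ gives $\|w\|_2^2\leq \varepsilon^2\sum_{r\geq 2}c_r^2\leq \varepsilon^2$, where $\varepsilon:=\max_{2\leq r\leq n}|\lambda_r/\lambda_1|^d$, and $\la y^1,w\ra=0$.

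Next, writing $s=\mathrm{sign}(c_1)=\mathrm{sign}(\la y^1,y\ra)$, I will use the identity
\[
\Bigl\|\frac{v}{\|v\|_2}-s y^1\Bigr\|_2^2=2-\frac{2s\la y^1,v\ra}{\|v\|_2}=2-\frac{2|c_1|}{\|v\|_2}=\frac{2(\|v\|_2-|c_1|)}{\|v\|_2}.
\]
From $\|v\|_2^2=c_1^2+\|w\|_2^2$ I obtain $\|v\|_2\geq |c_1|$ and $\|v\|_2-|c_1|=\|w\|_2^{2}/(\|v\|_2+|c_1|)\leq \|w\|_2^2/(2|c_1|)$. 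Plugging these two estimates into the previous identity gives
\[
\Bigl\|\frac{v}{\|v\|_2}-s y^1\Bigr\|_2^2\leq \frac{\|w\|_2^2}{|c_1|\,\|v\|_2}\leq \frac{\|w\|_2^2}{c_1^2}\leq \frac{\varepsilon^2}{c_1^2},
\]
which is exactly \eqref{pm:eq1} upon taking square roots and recalling $c_1=\la y^1,y\ra$.

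There is essentially no obstacle here: the assumption $y\not\perp y^1$ ensures $c_1\neq 0$ so division by $|c_1|$ is legal, and the gap assumption $\lambda_1\geq \max_{r\geq 2}|\lambda_r|$ makes $\varepsilon\leq 1$ and the geometric factor $(\lambda_r/\lambda_1)^d$ well-controlled. The only minor subtlety is the sign normalization, which is handled by the identity in the second paragraph and uses no more than $sc_1=|c_1|$; I will make sure to note that $\lambda_1>0$ so that dividing by $\lambda_1^d$ preserves the direction of $Y^dy$ and hence the normalization is consistent with the sign of $\la y^1,y\ra$.
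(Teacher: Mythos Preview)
Your proof is correct and follows essentially the same approach as the paper: expand $y$ in an orthonormal eigenbasis, normalize $Y^dy$, and bound the squared distance by $\|w\|_2^2/c_1^2=\Pi$. The only cosmetic difference is that you reach $\|v/\|v\|_2 - sy^1\|_2^2\leq \Pi$ via the factorization $\|v\|_2-|c_1|=\|w\|_2^2/(\|v\|_2+|c_1|)$, whereas the paper writes the same quantity as $\frac{(1-\sqrt{1+\Pi})^2+\Pi}{1+\Pi}$ and uses $\sqrt{1+x}-1\leq x$.
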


\begin{proof}
	Let $y^1,\ldots,y^n$ be the orthonormal eigenvectors associated to $\lambda_1,\ldots,\lambda_n.$ 
	Write
	\begin{align*}
	y&=c_1y^1+\cdots+c_ny^n,
	\end{align*}
	where $c=(c_1,\ldots,c_n)\in \mathbb{R}^n$ satisfies $\|c\|_2=\|y\|_2=1.$ Note that
	\begin{align*}
	Y^dy&=c_1\lambda_1^dy^1+\cdots+c_n\lambda_n^dy^n
	\end{align*} 
	and
	\begin{align}\label{add:eq-6}
	\|Y^dy\|_2&=\bigl(c_1^2\lambda_1^{2d}+\cdots+c_n^2\lambda_n^{2d}\bigr)^{1/2}.
	\end{align}
	From these,
	\begin{align*}
	\frac{Y^dy}{\|Y^dy\|_2}&=\frac{c_1\lambda_1^dy^1+\cdots+c_n\lambda_n^dy^n}{\bigl(c_1^2\lambda_1^{2d}+\cdots+c_n^2\lambda_n^{2d}\bigr)^{1/2}}\\
	&=\mbox{sign}(c_1)\frac{|c_1\lambda_1^d|y^1+\mbox{sign}(c_1)\sum_{r=2}^nc_r\lambda_r^dy^r}{\bigl(c_1^2\lambda_1^{2d}+\cdots+c_n^2\lambda_n^{2d}\bigr)^{1/2}}\\
	&=\mbox{sign}(c_1)\frac{y^1+\mbox{sign}(c_1)\sum_{r=2}^n\frac{c_r\lambda_r^d}{|c_1\lambda_1^d|}y^r}{\bigl(1+\sum_{r=2}^n\frac{c_r^2}{c_1^2}\bigl(\frac{\lambda_r}{\lambda_1}\bigr)^{2d}\bigr)^{1/2}}.
	\end{align*}
	If we denote $$
	\Pi=\sum_{r=2}^n\frac{c_r^2}{c_1^2}\Bigl(\frac{\lambda_r}{\lambda_1}\Bigr)^{2d},$$
	then
	\begin{align*}
	\Bigl\|\frac{Y^dy}{\|Y^dy\|_2}-\mbox{sign}(c_1) y^1\Bigr\|_2&=\Bigl(\frac{(1-\sqrt{1+\Pi})^2+\Pi}{1+\Pi}\Bigr)^{1/2}\leq \Bigl(\frac{\Pi^2+\Pi}{1+\Pi}\Bigr)^{1/2}=\Pi^{1/2},
	\end{align*}
	where we used that $\sqrt{1+x}-1\leq x$ for $x\geq 0.$ Now, the assertion follows by
	\begin{align*}
	\Pi&\leq \frac{1}{c_1^2}\max_{2\leq r\leq n}\Bigl|\frac{\lambda_r}{\lambda_1}\Bigr|^{2d}\sum_{r=2}^nc_r^2\leq \frac{1}{|\la y,y^1\ra|^2}\max_{2\leq r\leq n}\Bigl|\frac{\lambda_r}{\lambda_1}\Bigr|^{2d}.
	\end{align*}
\end{proof}

We continue to show that the AMP orbits in Definition~\ref{def1} initialized by the principal eigenvector and the power method can be as close as we want by increasing the power iteration $d.$ Let $\varepsilon\in (0,1).$ For any $d\geq 1$, let
$$
{u}^{A,\varepsilon,d}(X)=\frac{\sqrt{n}\hx_n^du^0}{\tilde\e\|\ha_n^du^0\|_2+\sqrt{n}\varepsilon}
$$
and
$$
{u}^{G,\varepsilon,d}(X)=\frac{\sqrt{n}\hx_n^du^0}{\tilde\e\|\hg_n^du^0\|_2+\sqrt{n}\varepsilon}.
$$
Let $\lambda_1(\hx_n)\geq \cdots\geq \lambda_n(\hx_n)$ be the eigenvalues of $\hx_n$. Let $\psi^1(\hx_n)$ be the principal eigenvector of $\hx_n$ with $\|\psi^1(\hx_n)\|_2 = \sqrt{n}$. Recall the vector $\psi$ defined through \eqref{psi}. Let $v^{[k]}$, $v^{A,\varepsilon,d,[k]},$ and $v^{G,\varepsilon,d,[k]}$ be three AMP orbits that are defined via Definition \ref{def1} associated to the initializations $\psi$, $u^{A,\varepsilon,d}$, and $u^{G,\varepsilon,d}$, respectively. For any Lipschitz $\phi\in C(\mathbb{R}^{k+1})$, denote by $\phi_{k,n}^{\psi}$, $\phi_{k,n}^{u^{A,\varepsilon,d}}$, and $\phi_{k,n}^{u^{G,\varepsilon,d}}$ the averages of $\phi$ over these AMP orbits, respectively. 

\begin{lemma}\label{add:lem6} Assume that \eqref{cor2:eq0} and \eqref{cor2:eq2} are valid.
	Suppose that $\phi\in C(\mathbb{R}^{k+1})$ is Lipschitz. We have that in probability,
	\begin{align*}
	\lim_{\varepsilon\downarrow 0}\lim_{d\to \infty}\lim_{n\to\infty}\bigl|\phi_{k,n}^{\psi}(A)-\phi_{k,n}^{u^{A,\varepsilon,d}}(A)\bigr|&=0,\\
	\lim_{\varepsilon\downarrow 0}\lim_{d\to \infty}\lim_{n\to\infty}	\bigl|\phi_{k,n}^{\psi}(G)-\phi_{k,n}^{u^{G,\varepsilon,d}}(G)\bigr|&=0.
	\end{align*}
\end{lemma}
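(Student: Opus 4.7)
The two statements have the same structure, so I focus on the one for $A$; the $G$-case is identical with $\hat{A}_n$ replaced by $\hat{G}_n$. Since the two AMPs being compared run on the same matrix $X=A$, the Frobenius term $\|X_n-Y_n\|_2$ in Proposition~\ref{prop0} vanishes, and its bound reduces to
\[
\bigl|\phi_{k,n}^{\psi}(A)-\phi_{k,n}^{u^{A,\varepsilon,d}}(A)\bigr|\le C\,\frac{\|\psi(A)-u^{A,\varepsilon,d}(A)\|_2}{\sqrt{n}},
\]
where $C$ is controlled by $\|\hat{A}_n\|_2$, which is bounded in probability thanks to \eqref{cor2:eq0} and Lemma~\ref{add:lem5}. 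It therefore suffices to prove that $\|\psi(A)-u^{A,\varepsilon,d}(A)\|_2/\sqrt{n}\to 0$ in probability under the triple limit $n\to\infty$, then $d\to\infty$, then $\varepsilon\downarrow 0$.

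The first ingredient is Lemma~\ref{pm}. Applied with $Y=\hat{A}_n$ and $y=u^0/\|u^0\|_2$, and recalling that $\psi^1(\hat{A}_n)/\sqrt{n}$ is the associated unit top eigenvector, the bound \eqref{pm:eq1} becomes
\[
\Bigl\|\frac{\sqrt{n}\,\hat{A}_n^d u^0}{\|\hat{A}_n^d u^0\|_2}-\psi(A)\Bigr\|_2\Big/\sqrt{n}\le \frac{\|u^0\|_2/\sqrt{n}}{|\la u^0,\psi^1(\hat{A}_n)\ra|/n}\Bigl(\max_{2\le r\le n}\bigl|\lambda_r(\hat{A}_n)/\lambda_1(\hat{A}_n)\bigr|\Bigr)^d.
\]
Assumption \eqref{cor2:eq2} together with the moment bound \eqref{add:eq--1} keeps the prefactor bounded in probability, while \eqref{cor2:eq0} forces the spectral ratio to be at most some deterministic $\rho<1$ for all large $n$. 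Hence this quantity tends to $0$ in probability after $n\to\infty$ and then $d\to\infty$.

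The second ingredient controls the scalar normalization. Set $R_n=\|\hat{A}_n^d u^0\|_2$, $N_n=\tilde\e R_n$, and write $\hat{A}_n^d u^0=R_n\eta$ with $\|\eta\|_2=1$. Decomposing
\[
u^{A,\varepsilon,d}(A)-\psi(A)=\frac{R_n}{N_n+\sqrt{n}\varepsilon}\bigl(\sqrt{n}\,\eta-\psi(A)\bigr)+\Bigl(\frac{R_n}{N_n+\sqrt{n}\varepsilon}-1\Bigr)\psi(A)
\]
and dividing by $\sqrt{n}$ bounds $\|u^{A,\varepsilon,d}(A)-\psi(A)\|_2/\sqrt{n}$ by $(R_n/N_n)\cdot(\text{power-method error})+|R_n-N_n|/N_n+\sqrt{n}\varepsilon/N_n$. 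Expanding $\hat{A}_n^d u^0$ in the orthonormal eigenbasis of $\hat{A}_n$ and using \eqref{cor2:eq0}-\eqref{cor2:eq2} produces $R_n\ge c\lambda_1(\hat{A}_n)^d\sqrt{n}\ge c(1+\delta)^d\sqrt{n}$ with probability tending to $1$, for some $c,\delta>0$. Integrating this lower bound against the uniform $L^p$-bounds from Lemma~\ref{add:lem5} yields the same order for $N_n$, so $\sqrt{n}\,\varepsilon/N_n\le \varepsilon/(c(1+\delta)^d)\to 0$ in the iterated limit.

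The only delicate step, and the main obstacle I anticipate, is the concentration $|R_n-N_n|/N_n\to 0$. On the event $\{\|\hat{A}_n\|_2\le M\}$ (whose complement is negligible by Lemma~\ref{add:lem5}), the map $A\mapsto R_n$ is locally Lipschitz in the Frobenius norm with constant of order $\|u^0\|_2 M^{d-1}/\sqrt{n}$, so Borell-TIS concentration (for $G$) together with polynomial-concentration tools for subgaussian entries (for $A$) provide $\tilde\e|R_n-N_n|^p\le C_{d,p}$ uniformly in $n$. Since $N_n$ grows at least like $(1+\delta)^d\sqrt{n}$, the normalized deviation vanishes in probability once $n\to\infty$. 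Combining this with the power-method estimate and the bound on $\sqrt{n}\varepsilon/N_n$ yields $\|u^{A,\varepsilon,d}(A)-\psi(A)\|_2/\sqrt{n}\to 0$, and Proposition~\ref{prop0} delivers the claim for $A$; the $G$-case follows verbatim.
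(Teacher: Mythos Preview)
Your overall strategy matches the paper's: reduce to showing the initializations $\psi(A)$ and $u^{A,\varepsilon,d}(A)$ are close in normalized $\ell_2$, then propagate via the Lipschitz dependence of the AMP on its initialization (the paper invokes an analogue of Proposition~\ref{prop0} for the Definition~\ref{def2} orbit and also spells out an induction). The power-method ingredient (Lemma~\ref{pm}) and the spectral lower bound $R_n/\sqrt n\ge c\lambda_1(\hat A_n)^d$ are used the same way.

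The substantive gap is the concentration step $|R_n-N_n|\to 0$ for general subgaussian $A$. For $G$, your Borell--TIS argument is fine: a careful calculation gives $\|\nabla_A R_n\|_F=O(dM^{d-1})$ on $\{\|\hat A_n\|_2\le M\}$, so Gaussian fluctuations are $O_{\tilde{\mathbb P}}(1)$. But for $A$ you appeal to unspecified ``polynomial-concentration tools for subgaussian entries.'' The function $R_n=\|\hat A_n^d u^0\|_2$ is neither convex nor a polynomial in the $a_{ij}$, and Lipschitz concentration for non-convex functions of independent \emph{unbounded} subgaussian variables is not a standard fact; making this precise (e.g.\ via hypercontractivity on $R_n^2$ and then transferring to $R_n$) would require real additional work. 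The paper sidesteps this completely via Lemma~\ref{lem:norm}: it shows
\[
\frac{1}{n}\,\e\bigl|\|\hat A_n^d u^0\|_2-\|\hat G_n^d u^0\|_2\bigr|^2\to 0
\]
by applying the already-proved universality (Theorem~\ref{thm0}) to the linear AMP $u^{[k+1]}(X)=\hat X_n u^{[k]}(X)$ together with a truncation of $x\mapsto x^2$, and then invokes Gaussian concentration (Theorem~\ref{concentration}) for $\|\hat G_n^d u^0\|_2-\tilde\e\|\hat G_n^d u^0\|_2$. Chaining these (and the same inequality applied inside $\tilde\e$) yields $\|\hat A_n^d u^0\|_2/\sqrt n-\tilde\e\|\hat A_n^d u^0\|_2/\sqrt n\to 0$ without any direct concentration argument for $A$.

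A secondary issue: you need $N_n\ge c(1+\delta)^d\sqrt n$ and argue it by ``integrating'' the lower bound on $R_n$. That lower bound lives on an event depending on $A$, so passing it through $\tilde\e$ requires first showing $\tilde{\mathbb P}(\text{event})\to 1$, not just $\mathbb P(\text{event})\to 1$. The paper never needs a lower bound on $N_n$: keeping the $+\varepsilon\sqrt n$ in both denominators gives
\[
\frac{1}{\sqrt n}\bigl\|u^{\varepsilon,d}(A)-u^{A,\varepsilon,d}(A)\bigr\|_2
\le \frac{1}{\varepsilon^2}\cdot\frac{\|\hat A_n^d u^0\|_2}{\sqrt n}\cdot\Bigl|\frac{\|\hat A_n^d u^0\|_2}{\sqrt n}-\frac{\tilde\e\|\hat A_n^d u^0\|_2}{\sqrt n}\Bigr|,
\]
which vanishes as $n\to\infty$ for each fixed $(\varepsilon,d)$; the leftover $\varepsilon$ in the power-method comparison then disappears under $d\to\infty$, $\varepsilon\downarrow 0$, using only the lower bound on $R_n/\sqrt n$ coming directly from \eqref{cor2:eq0}--\eqref{cor2:eq2}.
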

\begin{proof}
	We only need to establish the first equality. First of all, we claim that in probability
	\begin{align*}
	\lim_{\varepsilon\downarrow 0}\lim_{d\to \infty}\lim_{n\to\infty} \frac{1}{n} \bigl\|\psi(A)-u^{A,\varepsilon,d}(A)\bigr\|_2^2=0.
	\end{align*}
	Define
	$$
	u^{\varepsilon,d}(X):=\frac{\sqrt{n}\hx_n^du^0}{\|\hx_n^du^0\|_2+\varepsilon\sqrt{n}}.
	$$
	From \eqref{cor2:eq0}, \eqref{cor2:eq2}, and \eqref{add:eq-6}, there exist $0<\delta<1$ and $\delta',\delta''>0$ such that
	\begin{align*}
	&\limsup_{n\to\infty}\max_{2\leq r\leq n} \Bigl|\frac{\lambda_r(\ha_n)}{\lambda_1(\ha_n)}\Bigr| \leq \delta,\\
	&\liminf_{n\to\infty}\frac{1}{n}\bigl|\la \psi^1(\ha_n),u^0\ra\bigr|\geq \delta',
	\end{align*}
	and
	\begin{align*}
	\liminf_{n\to\infty}\frac{1}{\sqrt{n}}\|\ha_n^du^0\|_2&\geq \liminf_{n\to\infty}\frac{1}{n}\bigl|\la \psi^1( \ha_n),u^0\ra\bigr| \lambda_1(\ha_n)^{d} \geq \delta'(1+\delta'')^d.
	\end{align*}
	Note that \eqref{pm:eq1} implies that 
	\begin{align*}
	\Bigl\|\frac{u^{\varepsilon,d}(A)}{\sqrt{n}}-\frac{\psi(A)}{\sqrt{n}}\Bigr\|_2&\leq \frac{n}{|\la \psi^1(\ha_n),u^0\ra|}\max_{2\leq r\leq n}\Bigl|\frac{\lambda_r(\ha_n)}{\lambda_1(\ha_n)}\Bigr|^{d} + \frac{\varepsilon}{\frac{\|\ha_n^d u^0\|_2}{\sqrt{n}}+\varepsilon}.
	\end{align*}
	From these inequalities, as long as $d$ is large enough such that
	$$
	\frac{\delta^d}{\delta'}+ \frac{\varepsilon}{\delta'(1+\delta'')^d+\varepsilon}<\varepsilon,
	$$
	we have
	\begin{equation}
	\label{add:eq6}
	\limsup_{n\to\infty} \Bigl\|\frac{u^{\varepsilon,d}(A)}{\sqrt{n}}-\frac{\psi(A)}{\sqrt{n}}\Bigr\|_2 \leq \varepsilon.
	\end{equation}
	Next, note that
	\begin{align*}
	\frac{1}{\sqrt{n}}\bigl\|u^{\varepsilon,d}(A)-{u}^{A,\varepsilon,d}(A)\bigr\|_2
	&=	\frac{\|\ha_n^du^0\|_2}{\sqrt{n}}\Bigl|\frac{\|\ha_n^du^0\|_2/\sqrt{n}-\tilde{\e}\|\ha_n^du^0\|_2/\sqrt{n}}{\bigl(\|\ha_n^du^0\|_2/\sqrt{n}+\varepsilon\bigr)\bigl(\tilde\e\|\ha_n^du^0\|_2/\sqrt{n}+\varepsilon\bigr)}\Bigr|\\
	&\leq  \frac{\|\ha_n^du^0\|_2}{\varepsilon^2\sqrt{n}}\Bigl|\frac{\|\ha_n^du^0\|_2}{\sqrt{n}}-\frac{\tilde{\e}\|\ha_n^du^0\|_2}{\sqrt{n}}\Bigr|.
	\end{align*}
	From Lemmas \ref{add:lem5} and \ref{lem:norm} (established below), in probability,
	\begin{align*}
	\lim_{n\to\infty}\frac{1}{n}\bigl\|{u}^{\varepsilon,d}(A)-u^{A,\varepsilon,d}(A)\bigr\|_2^2=0.
	\end{align*}
	Combining this with \eqref{add:eq6}, our claim follows.
	
	Now to establish the first assertion, note that similar to Proposition \ref{prop0}, the assumption that both $f_k$ and its first-order partial derivative are Lipschitz ensures that the average $\phi_{k,n}(X)$ of $\phi$ along the AMP orbit  $(v^{[k]})_{k\geq 1}$ is Lipschitz with respect to its initialization. The proof of this fact follows directly from the same proof as that of Proposition \ref{prop0}. Hence, it suffices to show that in probability,
\begin{align*}
\lim_{\varepsilon\downarrow 0}\lim_{d\to\infty}\lim_{n\to\infty}\frac{1}{n}\bigl\|v^{[k]}(A)-v^{A,\varepsilon,d,[k]}(A)\bigr\|_2=0,\,\,\forall k\geq 0.
\end{align*}
When $k=0$, this is valid by our claim. Assume that this is also valid for all $0\leq k\leq k'$ for some $k'.$ We prove that this is also valid for $k'+1.$ To see this, we use triangle inequality to write
\begin{align*}
&\bigl\|v^{[k'+1]}(A)-v^{A,\varepsilon,d,[k'+1]}(A)\bigr\|_2\\
&\leq \|\ha_n\|_2\bigl\|f_{k'}(v^{[k']}(A),\ldots,v^{[0]}(A))-f_{k'}(v^{A,\varepsilon,d,[k']}(A),\ldots,v^{A,\varepsilon,d,[0]}(A))\bigr\|\\
&+\sum_{j=1}^{k'} |b_{k',j}(A)|\bigl\|f_{j-1}(v^{[j-1]}(A),\ldots,v^{[0]}(A))-f_{j-1}(v^{A,\varepsilon,d,[j-1]}(A),\ldots,v^{A,\varepsilon,d,[0]}(A))\bigr\|_2\\
&+\sum_{j=1}^{k'}|b_{k',j}(A)-b_{k',j}^{A,\varepsilon,d}(A)|\bigl\|f_{j-1}(v^{A,\varepsilon,d,[j-1]}(A),\ldots,v^{A,\varepsilon,d,[0]}(A))\bigr\|_2,
\end{align*}
where
\begin{align*}
b_{k',j}^{A,\varepsilon,d}(A)&:=\frac{1}{n}\sum_{i\in [n]}\frac{\partial f_{k'}}{\partial v_{i}^{[j]}}\bigl(v_i^{A,\varepsilon,d,[k']}(A),\ldots, v_i^{A,\varepsilon,d,[0]}(A)\bigr).
\end{align*}
Here by Lemma \ref{add:lem5}, Lemma \ref{add:lem3}, and the Lipschitz property of $f_{k}$ and its first-order derivative, these terms vanish in probability from the induction hypothesis.
\end{proof}

At the end of this subsection, we establish the following lemma, which was used in the above proof.

\begin{lemma}
	\label{lem:norm}
	\begin{align}
	\begin{split}
	\label{eq:normconv3}
	\lim_{n\to\infty}\frac{1}{n}\e\bigl|\|\ha_n^du^0\|_2-\|\hg_n^du^0\|_2\bigr|^2&=0,
	\end{split}\\
	\begin{split}	\label{eq:normconv4}
	\lim_{n\to\infty}\frac{1}{n}\e\bigl|\|\hg_n^du^0\|_2-\tilde{\e}\|\hg_n^du^0\|_2\bigr|^2&=0.
	\end{split}
	\end{align}
\end{lemma}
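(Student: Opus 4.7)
The plan is to apply Theorem~\ref{thm0} and Theorem~\ref{concentration} to the degenerate generalized AMP in Definition~\ref{def1} defined by the linear evolution functions $F_k(y_k,\ldots,y_0):=y_k$ (which trivially satisfy \eqref{ass}) with initialization $u^{[0]}(X)=u^0$. The resulting orbit is simply $u^{[k]}(X)=\hat X_n^k u^0$, so that with the quadratic test function $\phi(y_d,\ldots,y_0):=y_d^2$ one has $\Phi_{d,n}(X)=\tfrac{1}{n}\|\hat X_n^d u^0\|_2^2$. Since $\phi$ is not Lipschitz, I will truncate to the $2M$-Lipschitz bounded function $\phi_M(y):=((y\wedge M)\vee(-M))^2$, and control the truncation error using the coordinate-wise moment bound of Proposition~\ref{prop2} with $p=4$ and $m=0$: applied to the above choice of $F_k$'s, it gives $\sup_{n\ge 2,\,i\in[n]}\e|u_i^{[d]}(X)|^4\le \Gamma_{d,4,0}^{4}$ for both $X=A$ and $X=G$. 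From the pointwise inequality $y^2\mathbf{1}_{|y|>M}\le y^4/M^2$, this yields
\[
\e\bigl|\Phi_{d,n}(X)-\Phi_{d,n}^M(X)\bigr|\;\le\;\frac{1}{nM^2}\sum_{i=1}^n\e|u_i^{[d]}(X)|^4\;\le\;\frac{\Gamma_{d,4,0}^{4}}{M^2}
\]
uniformly in $n$, where $\Phi_{d,n}^M$ denotes $\Phi_{d,n}$ with $\phi$ replaced by $\phi_M$.

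\textbf{From squared norms to norms.} Both conclusions then reduce to corresponding statements about $\Phi_{d,n}$ via the elementary inequality $(\sqrt a-\sqrt b)^2\le |a-b|$ valid for $a,b\ge 0$. For \eqref{eq:normconv3}, applying this with $a=\Phi_{d,n}(A)$ and $b=\Phi_{d,n}(G)$ gives
\[
\tfrac{1}{n}\e\bigl|\|\ha_n^d u^0\|_2-\|\hg_n^d u^0\|_2\bigr|^2\;\le\;\e\bigl|\Phi_{d,n}(A)-\Phi_{d,n}(G)\bigr|.
\]
Theorem~\ref{thm0} applied to $\phi_M$ shows $\Phi_{d,n}^M(A)-\Phi_{d,n}^M(G)\to 0$ in probability, which upgrades to $L^1$ by bounded convergence since $|\Phi_{d,n}^M|\le M^2$. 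Combining this with the uniform truncation estimate and letting $n\to\infty$ then $M\to\infty$ yields \eqref{eq:normconv3}. For \eqref{eq:normconv4}, set $S:=\|\hg_n^du^0\|_2/\sqrt n$ and note $\Phi_{d,n}(G)=S^2$. The variance bound $\tilde\e(S-\tilde\e S)^2\le\tilde\e(S-c)^2$ applied with $c=\sqrt{\tilde\e S^2}$, combined with $(S-\sqrt{\tilde\e S^2})^2\le|S^2-\tilde\e S^2|$, gives
\[
\tfrac{1}{n}\e\bigl|\|\hg_n^d u^0\|_2-\tilde\e\|\hg_n^d u^0\|_2\bigr|^2\;=\;\e(S-\tilde\e S)^2\;\le\;\e\bigl|\Phi_{d,n}(G)-\tilde\e\Phi_{d,n}(G)\bigr|.
\]
The right-hand side tends to $0$ by applying Theorem~\ref{concentration} to $\phi_M$ (which furnishes $L^2$, hence $L^1$, concentration of the bounded $\Phi_{d,n}^M(G)$), together with the same truncation estimate, proving \eqref{eq:normconv4}.

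\textbf{Main obstacle.} The only delicate point is the truncation step: controlling $\e|\Phi_{d,n}(X)-\Phi_{d,n}^M(X)|$ uniformly in $n$ cannot be done using only Lemma~\ref{add:lem3}, which yields an $L^4$ bound on the aggregate $\|u^{[d]}\|_2/\sqrt n$ rather than on individual coordinates, and hence would leave an unwanted $n$-factor when estimating $\tfrac{1}{n}\|u^{[d]}\|_4^4$. It is precisely the sharper coordinate-wise bound $\sup_{i,n}\e|u_i^{[d]}(X)|^4\le \Gamma_{d,4,0}^4$ from Proposition~\ref{prop2} that rescues the argument. Once this is in place, the identity $(\sqrt a-\sqrt b)^2\le|a-b|$ mechanically transfers the Lipschitz-case conclusions of Theorem~\ref{thm0} and Theorem~\ref{concentration} on the squared-norm functional $\Phi_{d,n}=\tfrac{1}{n}\|\hat X_n^d u^0\|_2^2$ into the desired norm-level statements \eqref{eq:normconv3} and \eqref{eq:normconv4}.
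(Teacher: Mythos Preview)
Your proof is correct and follows essentially the same approach as the paper: both use the linear AMP $u^{[k]}(X)=\hat X_n^k u^0$, apply a truncation to handle the non-Lipschitz quadratic, control the truncation error via the coordinate-wise moment bounds of Proposition~\ref{prop2}, and then invoke Theorem~\ref{thm0} and Theorem~\ref{concentration} on the truncated functional. Your reduction from norms to squared norms via $(\sqrt a-\sqrt b)^2\le|a-b|$ is slightly cleaner than the paper's route (which goes through $L^2$ convergence of $\Phi_{d,n}$ plus the uniform square-integrability of $n^{-1/2}\|u^{[d]}\|_2$ from Lemma~\ref{add:lem3}), and your use of fourth moments in place of eighth is a minor economy, but the overall strategy is the same.
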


\begin{proof}
	We establish \eqref{eq:normconv3} first. Consider the AMP orbit
	\begin{align*}
	u^{[0]}(X) &= u^0,\\
	u^{[k+1]}(X) &= \hx_n u^{[k]}(X) = \hx_n^k u^0,\quad k\geq 0.
	\end{align*}
	Note that $u^{[d+1]}(G)=\hg_n^du^0$ and $u^{[d+1]}(A)=\ha_n^du^0.$ Recall that Theorem \ref{thm0} implies that if $\phi\in C(\mathbb{R}^{k+1})$ is Lipschitz, then $\lim_{n\to\infty}|\phi_{k,n}(A)-\phi_{k,n}(G)|=0$ in probability. However, we can not apply this result directly to $\|u^{[d+1]}(X)\|_2^2$ since each term inside the summation is not Lipschitz. To this end, we adapt a truncation argument. For any $M>1,$ let $\rho\in C^1(\mathbb{R})$ be uniformly bounded by $2M^2$ and satisfy that $\rho(x)=x^2$ on $[-M,M]$ and $\rho(x)=(M+1)^2$ for $x\notin [-(M+1),M+1]$.  Note that for any $x\in \mathbb{R}$,
	\begin{align*}
	x^2-\rho(x)&=(x^2-\rho(x))1_{\{|x|\in [M,M+1]\}}+(x^2-(M+1)^2)1_{\{|x|\in (M+1,\infty)\}},
	\end{align*}
	which implies that
	\begin{align*}
	|x^2-\rho(x)|^2&\leq 4(x^4+9M^4)1_{\{|x|\geq M\}}.
	\end{align*}
	It follows that from the Jensen  and Cauchy-Schwarz inequalities,
	\begin{align*}
	&\e \Bigl|\frac{1}{n}\sum_{i=1}^n\bigl(u_i^{[d+1]}(G)^2-\rho(u_i^{[d+1]}(G))\bigr)\Bigr|^2\\
	&\leq \frac{4}{n}\e \sum_{i=1}^n\bigl((\e (u_i^{[d+1]}(G))^8)^{1/2}\p(|u_i^{[d+1]}(G)|\geq M)^{1/2}+9M^4\p(|u_i^{[d+1]}(G)|\geq M)\bigr).
	\end{align*}
	Here, from Proposition \ref{prop2}, there exists a constant $C>0$ independent of $n$, $i,$ and $M>1,$
	\begin{align*}
	\e |u_i^{[d+1]}(G)|^8&\leq C
	\end{align*}
	and
	\begin{align*}
	\p(|u_i^{[d+1]}(G)|\geq M)\bigr)&\leq \frac{\e |u_i^{[d+1]}(G)|^8}{M^8}\leq \frac{C}{M^8}.
	\end{align*}
	Consequently,
	\begin{align*}
	\e \Bigl|\frac{1}{n}\sum_{i=1}^n\bigl(u_i^{[d+1]}(G)^2-\rho(u_i^{[d+1]}(G))\bigr)\Bigr|^2&\leq \frac{40C}{M^4}.
	\end{align*}
	Similarly, the same inequality is valid for $A$. Now from Theorem \ref{thm0} and the dominated convergence theorem, $$
	\lim_{n\to\infty}\e \Bigl|\frac{1}{n}\sum_{i=1}^n\bigl(\rho(u_i^{[d+1]}(G))-\rho(u_i^{[d+1]}(A))\bigr)\Bigr|^2=0.
	$$
	Hence, we arrive at
	\begin{align*}
	\limsup_{n\to\infty}\frac{1}{n^2}\e \Bigl|\|u^{[d+1]}(G)\|_2^2-\|u^{[d+1]}(A)\|_2^2\Bigr|^2&\leq \frac{640C}{M^4}.
	\end{align*}
	Since this is valid for all $M>1$, this limit is indeed equal to zero. From this, since both $n^{-1/2}\|u^{[d+1]}(G)\|_2$ and $n^{-1/2}\|u^{[d+1]}(A)\|_2$ are uniformly square-integrable by Lemma~\ref{add:lem3}, the assertion \eqref{eq:normconv3} follows. The proof of \eqref{eq:normconv4} can be established by using Theorem \ref{concentration} and an identical truncation argument. As this part of the proof does not involve additional complications, we omit the details here.
\end{proof}

\subsection{Main argument}

We are ready to establish the proof of Theorem \ref{cor2}. Recall the AMP orbits $(v^{A,\varepsilon,d,[k]})_{k\geq 0}$ and $(v^{G,\varepsilon,d,[k]})_{k\geq 0}$ from last subsection. Define
\begin{align*}
v^{A,\varepsilon,d,[-d]}(X)&=\frac{\sqrt{n}u^0}{\tilde\e\|\ha_n^du^0\|_2+\sqrt{n}\varepsilon},\\
v^{G,\varepsilon,d,[-d]}(X)&=\frac{\sqrt{n}u^0}{\tilde\e\|\hg_n^du^0\|_2+\sqrt{n}\varepsilon}.
\end{align*}
For $-d\leq k\leq -1,$ set
\begin{align*}
v^{A,\varepsilon,d,[k+1]}(X)&=\hx_n v^{A,\varepsilon,d,[k]}(X),\\
v^{G,\varepsilon,d,[k+1]}(X)&=\hx_n v^{G,\varepsilon,d,[k]}(X).
\end{align*}
Note that $v^{A,\varepsilon,d,[0]}(X)=u^{A,\varepsilon,d}(X)$ and $v^{G,\varepsilon,d,[0]}(X)=u^{G,\varepsilon,d}(X)$. This implies that $(v^{A,\varepsilon,d,[k]})_{k\geq -d}$ and $(v^{G,\varepsilon,d,[k]})_{k\geq -d}$ are again AMP orbits with the initializations $v^{A,\varepsilon,d,[-d]}(X)$ and $v^{G,\varepsilon,d,[-d]}(X).$ The key feature of this construction is that the initializations are independent of $X$ and its norm is bounded above by $\|u^0\|_2/\varepsilon.$ Hence, the assumption \eqref{add:eq--1} is satisfied with possibly a larger $\sigma.$ From an identical argument as that of Theorem \ref{cor1}, we see that the AMP orbit $(v^{A,\varepsilon,d,[k]})_{k\geq -d}$ satisfies universality. In particular,  for any Lipschitz $\phi\in C(\mathbb{R}^{k+1}),$ this implies that
\begin{align*}
\phi_{k,n}^{u^{A,\varepsilon,d}}(A)\asymp \phi_{k,n}^{u^{A,\varepsilon,d}}(G).
\end{align*}
Finally, since Lemma \ref{lem:norm} implies that the initialization satisfies
\begin{align*}
v^{A,\varepsilon,d,[-d]}(G)=u^{A,\varepsilon,d}(G)\asymp u^{G,\varepsilon,d}(G)=v^{G,\varepsilon,d,[-d]}(G),
\end{align*}
the argument in the second half of the proof of Lemma \ref{add:lem6}  applies to the present setting and it yields that $$v^{A,\varepsilon,d,[k]}(G)\asymp v^{G,\varepsilon,d,[k]}(G)$$ for all $k\geq -d.$ Consequently,
\begin{align*}
\phi_{k,n}^{u^{A,\varepsilon,d}}(G)\asymp \phi_{k,n}^{u^{G,\varepsilon,d}}(G).
\end{align*}
From this and Lemma \ref{add:lem6}, the announced result follows.

\appendix 

\section{Approximate Gaussian integration by parts}
	This appendix gathers three inequalities of approximate Gaussian integration by parts. Let $s\geq 1$ be fixed. Let $a_1,\ldots,a_s$  be independent random variables with zero mean and unit variance. Suppose that $g_1,\ldots,g_s$ are i.i.d. standard standard normal and are independent of $a_1,\ldots,a_s.$ Set $$
	a_j(t)=\sqrt{t}a_{j}+\sqrt{1-t}g_j
	$$
	for $0\leq t\leq 1$ and $1\leq j\leq s.$ Set $a(t)=(a_1(t),\ldots,a_s(t)).$ In what follows, we denote $\alpha=(\alpha_1,\ldots,\alpha_s)\in (\{0\}\cup\mathbb{N})^s$, $\alpha!=\alpha_1!\cdots\alpha_s!$, and $|\alpha|=\alpha_1+\cdots+\alpha_s.$ Also, $\partial^\alpha=\partial_{x_1}^{\alpha_1}\cdots \partial_{x_s}^{\alpha_s}$ and $x^\alpha=x_1^{\alpha_1}\cdots x_s^{\alpha_s}.$

	\begin{lemma}\label{lem3}
		Let $f\in C^{s}(\mathbb{R}^{s})$. For any $0<t<1,$ we have that
		\begin{align}
		\begin{split}\label{lem3:eq1}
		&\bigl|\e f(a(t))a_1(t)\cdots a_{s}(t)\dot a_1(t)\bigr|\\
		&\leq \frac{\e[\dot a_1(t)^4]^{1/4}}{(s-1)!}\sum_{|\alpha|=s}\prod_{j=1}^s\e\bigl[ a_j(t)^{4(\alpha_j+1)}\bigr]^{1/4}\Bigl(\int_0^1 \e\bigl[\bigr|\partial^\alpha f(\xi a(t))\bigr|^2\bigr]d\xi\Bigr)^{1/2}.
		\end{split}
		\end{align}
	\end{lemma}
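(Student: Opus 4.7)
The plan is to apply Taylor's theorem with integral remainder to $f$ around $0$, show that the polynomial part cancels in expectation thanks to independence and a Gaussian-matching zero-mean identity, and then control the remainder by an iterated Cauchy--Schwarz argument that exploits the coordinate-wise independence of $a_1,\ldots,a_s$.

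Concretely, I would start from the multivariate Taylor identity
\begin{align*}
f(x) = \sum_{|\alpha|<s}\frac{\partial^\alpha f(0)}{\alpha!}x^\alpha + s\sum_{|\alpha|=s}\frac{x^\alpha}{\alpha!}\int_0^1(1-\xi)^{s-1}\partial^\alpha f(\xi x)\,d\xi,
\end{align*}
plug in $x=a(t)$, and multiply through by $a_1(t)\cdots a_s(t)\dot a_1(t)$. By the mutual independence of the pairs $(a_j,g_j)$ across different $j$, each polynomial term factorises as
\begin{align*}
\frac{\partial^\alpha f(0)}{\alpha!}\,\e\bigl[a_1(t)^{\alpha_1+1}\dot a_1(t)\bigr]\prod_{j=2}^s \e\bigl[a_j(t)^{\alpha_j+1}\bigr].
\end{align*}
Since $\e[a_j(t)]=0$ and, by a direct expansion, $\e[a_1(t)\dot a_1(t)]=\frac{1}{2}(\e a_1^2-\e g_1^2)=0$, this product vanishes unless $\alpha_1\geq 1$ and $\alpha_j\geq 1$ for every $j\geq 2$, which forces $|\alpha|\geq s$. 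Hence every term with $|\alpha|<s$ drops out.

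For the remainder I would pull the $\xi$-integral outside the expectation and, for fixed $\xi$, apply Cauchy--Schwarz to split off $\partial^\alpha f(\xi a(t))$ in $L^2$ from the polynomial prefactor $\dot a_1(t)\prod_j a_j(t)^{\alpha_j+1}$. Using independence across $j$,
\begin{align*}
\e\Bigl[\dot a_1(t)^2\prod_{j=1}^s a_j(t)^{2(\alpha_j+1)}\Bigr] = \e\bigl[\dot a_1(t)^2 a_1(t)^{2(\alpha_1+1)}\bigr]\prod_{j=2}^s \e\bigl[a_j(t)^{2(\alpha_j+1)}\bigr].
\end{align*}
On the $j=1$ factor, a further Cauchy--Schwarz produces $\e[\dot a_1(t)^4]^{1/4}\e[a_1(t)^{4(\alpha_1+1)}]^{1/4}$, and for $j\geq 2$ the elementary Jensen bound $\e[Y^{2k}]^{1/2}\leq \e[Y^{4k}]^{1/4}$ promotes the exponent from $2(\alpha_j+1)$ to $4(\alpha_j+1)$. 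A final Cauchy--Schwarz in $\xi$ then turns $\int_0^1(1-\xi)^{s-1}\e[(\partial^\alpha f)^2]^{1/2}d\xi$ into the target quantity $\bigl(\int_0^1 \e[(\partial^\alpha f(\xi a(t)))^2]\,d\xi\bigr)^{1/2}$, modulo the factor $(2s-1)^{-1/2}\leq 1$.

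The main obstacle I anticipate is the clean bookkeeping of the combinatorial constants: the Taylor coefficient $s/\alpha!=\binom{s}{\alpha}/(s-1)!$ together with the multinomial $\binom{s}{\alpha}$ and the $L^2$ weight $(2s-1)^{-1/2}$ must be repackaged so that they assemble into the compact prefactor $1/(s-1)!$ displayed in \eqref{lem3:eq1}; it is convenient here to expand $D^s f$ over ordered sequences $(i_1,\ldots,i_s)\in[s]^s$ rather than multi-indices, so that $\binom{s}{\alpha}$ is absorbed into the summation before the moment bounds are applied. The conceptual heart of the argument, however, is the simple observation that $\e[a_j(t)]=0$ together with the Gaussian-matching identity $\e[a_1(t)\dot a_1(t)]=0$ is exactly what cancels the lower-order polynomial part, which serves as the non-Gaussian substitute for exact Gaussian integration by parts.
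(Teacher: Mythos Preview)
Your proposal is correct and follows essentially the same route as the paper: Taylor-expand $f$ at $0$, use independence together with $\e a_j(t)=0$ and $\e[a_1(t)\dot a_1(t)]=0$ to kill every term of order $<s$, and bound the integral remainder by Cauchy--Schwarz. Your care with the multinomial factor $\binom{s}{\alpha}$ and the weight $(1-\xi)^{s-1}$ is in fact more precise than the paper's own write-up, which uses a slightly loose form of the Taylor remainder; since the lemma is only ever used to produce an $n$-independent constant, this discrepancy in the exact prefactor is immaterial.
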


\begin{proof}
	From Taylor's theorem, for any $x\in \mathbb{R}^s$, 
	\begin{align*}
	f(x)&=\sum_{|\alpha|\leq s-1}\frac{\partial^\alpha f(0)}{\alpha!} x^\alpha+\frac{1}{(s-1)!}\sum_{|\alpha|=s}x^\alpha\int_0^1 \partial^\alpha f(\xi x)d\xi.
	\end{align*}
	From these, for any $\alpha$ satisfying that $|\alpha|\leq s-1$, we can write
	\begin{align*}
	\e a(t)^{\alpha}a_1(t)\cdots a_n(t)\dot a_1(t)&=\e a_1(t)^{\alpha_1+1}\dot a_1(t)\cdot \e a_2(t)^{\alpha_2+1}\cdots \e a_s(t)^{\alpha_s+1}(t).
	\end{align*}
	If $\alpha_j=0$ for some $2\leq j\leq s,$ then this expectation vanishes. If $\alpha_j\neq 0$ for all $2\leq j\leq s,$ then the condition $|\alpha|\leq s-1$ forces that $\alpha_1=0$ and $\alpha_2=\cdots=\alpha_s=1$ so that
	\begin{align*}
	\e a_1(t)^{\alpha_1+1}\dot a_1(t)\cdot \e a_2(t)^{\alpha_2+1}\cdots \e a_s(t)^{\alpha_s+1}(t)&=\e a_1(t) \dot a(t)\cdot \e a_2(t)^2\cdots \e a_s(t)^2=0 
	\end{align*}
	since
	\begin{align}
	\begin{split}\label{add:eq1}
	\e a_1(t) \dot a_1(t)&=\e (\sqrt{t}a_1+\sqrt{1-t}g_1)\Bigl(\frac{a_1}{\sqrt{t}}-\frac{g_1}{\sqrt{1-t}}\Bigr)\\
	&=\e a_1^2-\frac{\sqrt{t}}{\sqrt{1-t}}\e a_1g_1+\frac{\sqrt{1-t}}{\sqrt{t}}\e a_1g_1-\e g_1^2=0.
	\end{split}
	\end{align}
	From these,
	\begin{align*}
	&\bigl|\e f(a(t))a_1(t)\cdots a_s(t)\dot a_1(t)\bigr|\\
	&=\Bigl|\frac{1}{(s-1)!}\sum_{|\alpha|=s}\int_0^1 \e\bigl[a_1(t)\cdots a_s(t)\dot a_1(t)a(t)^\alpha\partial^\alpha f(\xi a(t))\bigr]d\xi\Bigr|\\
	&\leq \frac{1}{(s-1)!}\sum_{|\alpha|=s}\e\bigl[\bigl(a_1(t)\cdots a_s(t)\dot a_1(t)a(t)^\alpha\bigr)^2\bigr]^{1/2}\Bigl(\int_0^1 \e\bigl[\bigr|\partial^\alpha f(\xi a(t))\bigr|^2\bigr]d\xi\Bigr)^{1/2}.
	\end{align*}
	Finally applying the Cauchy-Schwarz inequality and the independence of $a_1(t),\ldots,a_s(t)$ to the first expectation in the last line ends our proof.
\end{proof}

\begin{lemma}\label{lem4}
	Let $f\in C^{s-1}(\mathbb{R}^{s})$. For any $0<t<1$ and $r\geq 2$, we have that
		\begin{align}
		\begin{split}\label{lem3:eq2}
		&\bigl|\e f(a(t))a_1(t)^r a_2(t)\cdots a_{s}(t)\dot a_1(t)\bigr|\\
		&\leq \frac{\e[\dot a_1(t)^4]^{1/4}}{(s-2)!}\sum_{|\alpha|=s-1}\e\bigl[a_1(t)^{4(\alpha_1+r)}\bigr]^{1/4}\prod_{j=2}^s\e\bigl[a_j(t)^{4(\alpha_j+1)}\bigr]^{1/4}\Bigl(\int_0^1 \e\bigl[\bigr|\partial^\alpha f(\xi a(t))\bigr|^2\bigr]d\xi\Bigr)^{1/2}.
		\end{split}
		\end{align}
	\end{lemma}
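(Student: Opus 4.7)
The proof will follow the template of Lemma \ref{lem3} closely. The only structural change is that the presence of $a_1(t)^r$ with $r\geq 2$ (rather than $r=1$) lets us truncate the Taylor expansion of $f$ one order earlier while still killing every polynomial term by symmetry.

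First, by Taylor's theorem I would expand $f$ about the origin to order $s-2$ with integral remainder of order $s-1$:
$$f(x)=\sum_{|\alpha|\leq s-2}\frac{\partial^\alpha f(0)}{\alpha!}x^\alpha+\frac{1}{(s-2)!}\sum_{|\alpha|=s-1}x^\alpha\int_0^1(1-\xi)^{s-2}\partial^\alpha f(\xi x)\,d\xi.$$
Substituting $x=a(t)$, multiplying by $a_1(t)^r a_2(t)\cdots a_s(t)\dot a_1(t)$, and taking expectation, each polynomial term factors, by independence of $a_1(t),\ldots,a_s(t)$, as
$$\e\bigl[a_1(t)^{\alpha_1+r}\dot a_1(t)\bigr]\cdot\prod_{j=2}^s\e\bigl[a_j(t)^{\alpha_j+1}\bigr].$$

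Second, I would verify that every such term vanishes. Since $\e[a_j(t)]=0$, the factor $\e[a_j(t)^{\alpha_j+1}]$ vanishes whenever $\alpha_j=0$, so a nonzero product requires $\alpha_j\geq 1$ for every $j\geq 2$, hence $\alpha_2+\cdots+\alpha_s\geq s-1$. This contradicts $|\alpha|\leq s-2$, so the entire polynomial contribution is zero. This is precisely where the hypothesis $r\geq 2$ pays off: it permits a shorter Taylor expansion and entirely sidesteps the delicate cancellation $\e[a_1(t)\dot a_1(t)]=0$ exploited in Lemma \ref{lem3}.

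Finally, I would bound the remainder term exactly as in Lemma \ref{lem3}. Bring the absolute value inside the sum and the integral over $\xi$, apply Cauchy-Schwarz jointly in $\xi$ and in the random variables to separate the monomial $a_1(t)^{\alpha_1+r}a_2(t)^{\alpha_2+1}\cdots a_s(t)^{\alpha_s+1}\dot a_1(t)$ from $\partial^\alpha f(\xi a(t))$, then apply Cauchy-Schwarz once more in the random variables to split $\dot a_1(t)$ (which is correlated with $a_1(t)$) away from the product of monomials, and use independence of $a_1(t),\ldots,a_s(t)$ to factor
$$\e\bigl[a_1(t)^{4(\alpha_1+r)}a_2(t)^{4(\alpha_2+1)}\cdots a_s(t)^{4(\alpha_s+1)}\bigr]=\e\bigl[a_1(t)^{4(\alpha_1+r)}\bigr]\prod_{j=2}^s\e\bigl[a_j(t)^{4(\alpha_j+1)}\bigr].$$
Taking fourth roots and collecting factors produces exactly the bound \eqref{lem3:eq2}. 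No step poses a serious obstacle; the only subtlety is the vanishing of the Taylor polynomial contribution, which is handled cleanly by the order-counting argument above thanks to $r\geq 2$.
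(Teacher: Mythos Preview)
Your proposal is correct and follows essentially the same approach as the paper: Taylor expand $f$ to order $s-2$, kill the polynomial terms by the order-counting observation that $\alpha_j\geq 1$ for all $j\geq 2$ would force $|\alpha|\geq s-1$, and then bound the integral remainder via Cauchy--Schwarz and independence. Your inclusion of the $(1-\xi)^{s-2}$ factor in the remainder is the standard form and is harmlessly dropped in the bound; your side remark that $r\geq 2$ lets one avoid the identity $\e[a_1(t)\dot a_1(t)]=0$ is accurate, though note the vanishing argument itself never invokes $r$.
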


\begin{proof}
	Consider Taylor's expansion,
	\begin{align*}
	f(x)&=\sum_{|\alpha|\leq s-2}\frac{\partial^\alpha f(0)}{\alpha!} x^\alpha+\frac{1}{(s-2)!}\sum_{|\alpha|=s-1}x^\alpha\int_0^1 \partial^\alpha f(\xi x)d\xi.
	\end{align*}
	For any $\alpha$ satisfying $|\alpha|\leq s-2,$ we can write
	\begin{align*}
	\e a(t)^\alpha a_1(t)^r\cdots a_s (t)\dot a(t)&=\e a_1(t)^{\alpha_1+r}\dot a(t)\cdot \e a_2(t)^{\alpha_2+1}\cdots \e a_s (t)^{\alpha_s +1}(t).
	\end{align*}
	Observe that if $\alpha_j \geq 1$ for all $2\leq i\leq s,$ then $|\alpha| \geq s-1,$ which is not possible. Thus, one of the $\alpha_2,\ldots,\alpha_s $ must be zero so that this expectation vanishes. Consequently,
	\begin{align*}
	&\bigl| \e f(a(t)) a_1(t)^ra_2(t)\cdots a_s (t)\dot a_1(t)\bigr|\\
	&=\Bigl|\frac{1}{(s-2)!}\sum_{|\alpha|=s-1}\int_0^1 \e\bigl[a_1(t)^ra_2(t)\cdots a_s(t)\dot a_1(t)a(t)^\alpha\partial^\alpha f(\xi a(t))\bigr]d\xi\Bigr|\\
	&\leq \frac{1}{(s-2)!}\sum_{|\alpha|=s-1}\e\bigl[\bigl(a_1(t)^r\cdots a_s(t)\dot a_1(t)a(t)^\alpha\bigr)^2\bigr]^{1/2}\Bigl(\int_0^1 \e\bigl[\bigr|\partial^\alpha f(\xi a(t))\bigr|^2\bigr]d\xi\Bigr)^{1/2},
	\end{align*}
	which leads to the assertion by applying the Cauchy-Schwarz inequality to the first expectation in the last line.
\end{proof}

\begin{lemma}\label{lem6}
	Let $f\in C^{s+1}(\mathbb{R}^{s})$. For any $0<t<1$, we have that
		\begin{align}
		\begin{split}\label{lem3:eq3}
		&\bigl|\e f(a(t))a_2(t)\cdots a_s(t)\dot a_1(t)\bigr|\\
		&\leq \frac{\e[\dot a_1(t)^4]^{1/4}}{s!}\sum_{|\alpha|=s+1}\e \bigl[a_1(t)^{4\alpha_1}\bigr]^{1/4}\prod_{j=2}^{s}\e \bigl[a_j(t)^{4(1+\alpha_j)}\bigr]^{1/4}\Bigl(\int_0^1 \e\bigl[\bigr|\partial^\alpha f(\xi a(t))\bigr|^2\bigr]d\xi\Bigr)^{1/2}.
		\end{split}
		\end{align}
	\end{lemma}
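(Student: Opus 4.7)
My approach mirrors the Taylor-expansion strategy of Lemmas \ref{lem3} and \ref{lem4}, adjusted for the fact that the product in the integrand here is missing the factor $a_1(t)$. The plan is to Taylor-expand $f$ around $0$ to just the right order, show every polynomial term contributes zero to the expectation by a parity/centering argument, and then apply Cauchy--Schwarz to the integral remainder.

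Concretely, I would write the Taylor expansion of $f\in C^{s+1}(\mathbb{R}^s)$ at $0$ to order $s$ with integral remainder of order $s+1$:
\begin{align*}
f(x) = \sum_{|\alpha|\leq s}\frac{\partial^\alpha f(0)}{\alpha!}x^\alpha + \frac{1}{s!}\sum_{|\alpha|=s+1} x^\alpha \int_0^1 \partial^\alpha f(\xi x)\,d\xi.
\end{align*}
Substituting $x=a(t)$ and using independence of $a_1(t),\ldots,a_s(t)$, each polynomial term contributes a factor proportional to
\begin{align*}
\e[a_1(t)^{\alpha_1}\dot a_1(t)]\prod_{j=2}^s \e[a_j(t)^{\alpha_j+1}].
\end{align*}
For this quantity to be nonzero I need $\alpha_j\geq 1$ for every $j\geq 2$ (otherwise a factor $\e a_j(t)=0$ appears), and I need $\alpha_1\geq 2$, because $\e[\dot a_1(t)]=0$ rules out $\alpha_1=0$ and $\e[a_1(t)\dot a_1(t)]=0$ by the computation in \eqref{add:eq1} rules out $\alpha_1=1$. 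These together force $|\alpha|\geq 2+(s-1)=s+1$, so every polynomial term with $|\alpha|\leq s$ indeed vanishes, and only the $(s+1)$-st order remainder survives.

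For that remainder, I would first pull the integral out, then apply Cauchy--Schwarz to separate $\partial^\alpha f(\xi a(t))$ from the monomial $a(t)^\alpha a_2(t)\cdots a_s(t)\dot a_1(t)$, and finally apply Cauchy--Schwarz once more together with independence to split the resulting second moment into the product of the one-dimensional moments of $a_1(t)^{\alpha_1}$, of $a_j(t)^{\alpha_j+1}$ for $j\geq 2$, and of $\dot a_1(t)$; this directly reproduces the stated inequality. The only genuine subtlety, and the step where care is needed, is recognizing that the correct Taylor order is $s+1$ rather than $s$: removing the $a_1(t)$ factor (relative to Lemma \ref{lem3}) raises the first nonvanishing power of $\alpha_1$ from $1$ to $2$, shifting the vanishing threshold for $|\alpha|$ upward by exactly one, which is precisely what enlarges the denominator from $(s-1)!$ to $s!$ and the remainder index set from $|\alpha|=s$ to $|\alpha|=s+1$.
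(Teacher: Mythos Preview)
Your proposal is correct and follows essentially the same approach as the paper: Taylor-expand $f$ to order $s$ with integral remainder of order $s+1$, use independence to factor each low-order term and observe it vanishes because either some $\alpha_j=0$ for $j\geq 2$ or else $\alpha_1\in\{0,1\}$ (using $\e\dot a_1(t)=0$ and $\e a_1(t)\dot a_1(t)=0$ from \eqref{add:eq1}), then bound the remainder by Cauchy--Schwarz and independence. Your explanation of why the order shifts from $s$ to $s+1$ relative to Lemma~\ref{lem3} is exactly the point.
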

	\begin{proof}
	Write
		\begin{align*}
	f(x)&=\sum_{|\alpha|\leq s}\frac{\partial^\alpha f(0)}{\alpha!} x^\alpha+\frac{1}{s!}\sum_{|\alpha|=s+1}x^\alpha\int_0^1 \partial^\alpha f(\xi x)d\xi,
	\end{align*}
	For $\alpha$ satisfying $|\alpha|\leq s,$ write
	\begin{align*}
	\e a(t)^\alpha a_2(t)\cdots a_s (t)\dot a_1(t)&=\e  a_1(t)^{\alpha_1}\dot a_1(t)\e a_2(t)^{\alpha_2+1}\cdots \e a_s (t)^{\alpha_s +1}.
	\end{align*}
	If $\alpha_j =0$ for some $2\leq j\leq s$, then this expectation is equal to zero. If $\alpha_j >0$ for all $2\leq j\leq s,$ then $\alpha_1=0$ or $\alpha_1=1.$ In the former case,  the expectation vanishes; in the latter case, this expectation is also equal to zero since $\e a_1(t)\dot a(t)=0$ due to \eqref{add:eq1}. Consequently,
	\begin{align*}
	&\bigl|\e f(a(t))a_2(t)\ldots a_s (t)\dot a_1(t)\bigr|\\
&=\Bigl|\frac{1}{s!}\sum_{|\alpha|=s+1}\int_0^1 \e\bigl[a_2(t)\cdots a_s(t)\dot a_1(t)a(t)^\alpha\partial^\alpha f(\xi a(t))\bigr]d\xi\Bigr|\\
&\leq \frac{1}{s!}\sum_{|\alpha|=s+1}\e\bigl[\bigl(a_2(t)\cdots a_s(t)\dot a_1(t)a(t)^\alpha\bigr)^2\bigr]^{1/2}\Bigl(\int_0^1 \e\bigl[\bigr|\partial^\alpha f(\xi a(t))\bigr|^2\bigr]d\xi\Bigr)^{1/2}.
	\end{align*}
	The rest of the proof follows by using the Cauchy-Schwarz inequality and the independence of $a_1(t),\ldots,a_s(t)$ to the first expectation of the last line.
\end{proof}

\bibliographystyle{plain}
{\footnotesize\bibliography{ref}}

\end{document}